\newcommand{\R}{\mathbb{R}}
\newcommand{\norm}[1]{\Vert #1 \Vert}
\newtheorem{theorem}{Theorem}
\newtheorem{remark}[theorem]{Remark}
\newtheorem{example}{Example}
\newtheorem{lemma}{Lemma}
\newtheorem{proposition}{Proposition}
\newtheorem{conclusion}{Conclusion}
\newtheorem{definition}{Definition}
\def\p{\partial}
\DeclareMathOperator*{\argmin}{arg\,min}
\title{Spectral Decompositions using One-Homogeneous Functionals}
\author{Martin Burger\footnote{The first three authors contributed equally.}{ }\thanks{Institut f\"ur Numerische und Angewandte Mathematik, Westf\"alische Wilhelms-Universit\"at M\"unster, Einsteinstr. 62, 48149 M\"unster, Germany} \and Guy Gilboa\footnotemark[1]{ }\thanks{Electrical Engineering Department, Technion -– Israel Institute of Technology, Haifa 32000, Israel} \and Michael Moeller\footnotemark[1]{ }\thanks{ Department of Computer Science, Technische Universit\"at M\"unchen, Germany} \and Lina Eckardt\footnotemark[2] \and Daniel Cremers \footnotemark[4]}
\begin{document}
\maketitle

\begin{abstract}
This paper discusses the use of absolutely one-homogeneous regularization functionals in a variational, scale space, and inverse scale space setting to define a nonlinear spectral decomposition of input data. We present several theoretical results that explain the relation between the different definitions. Additionally, results on the orthogonality of the decomposition, a Parseval-type identity and the notion of generalized (nonlinear) eigenvectors closely link our nonlinear multiscale decompositions to the well-known linear filtering theory. Numerical results are used to illustrate our findings.
\end{abstract}

\section{Introduction}
\label{sec:Introduction}
One of the most important and successful concepts in digital image and signal processing are changes of representation of an input signal to analyze and manipulate particular features that are well separated in a certain basis. For instance, soundwaves are often represented as a superposition of sine and cosine. A standard application is illustrated in Figure \ref{fig:classical} below: We are given a noisy discrete input signal $f\in \R^n$ (Fig. b) along with the prior knowledge that the desired noise-free signal (Fig. a) contains rather low frequencies. Thus, we use the discrete cosine transform (DCT), and apply an ideal low pass filter (Fig. d) to the resulting representation (Fig. c). We obtain the filtered coefficients (Fig. e) and, after inverting the DCT, obtain the final result shown in Fig. f.

\newcommand{\dctFigureWidth}{0.32\textwidth}
\begin{figure}[h]
\begin{center}
\begin{minipage}[t]{\dctFigureWidth}
\begin{center}
\includegraphics[width=1\textwidth]{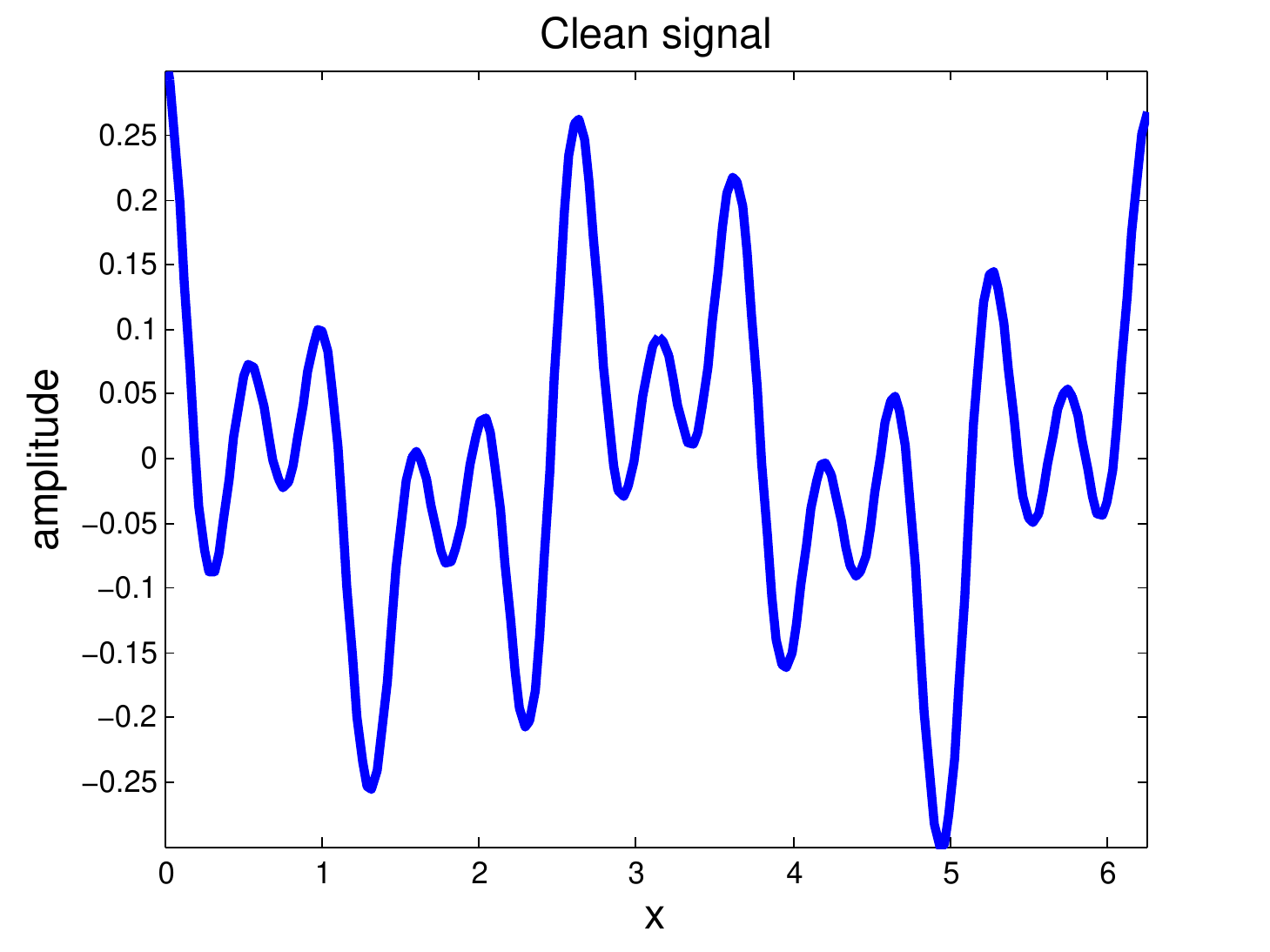}
\end{center}
\end{minipage}
\begin{minipage}[t]{\dctFigureWidth}
\begin{center}
\includegraphics[width=1\textwidth]{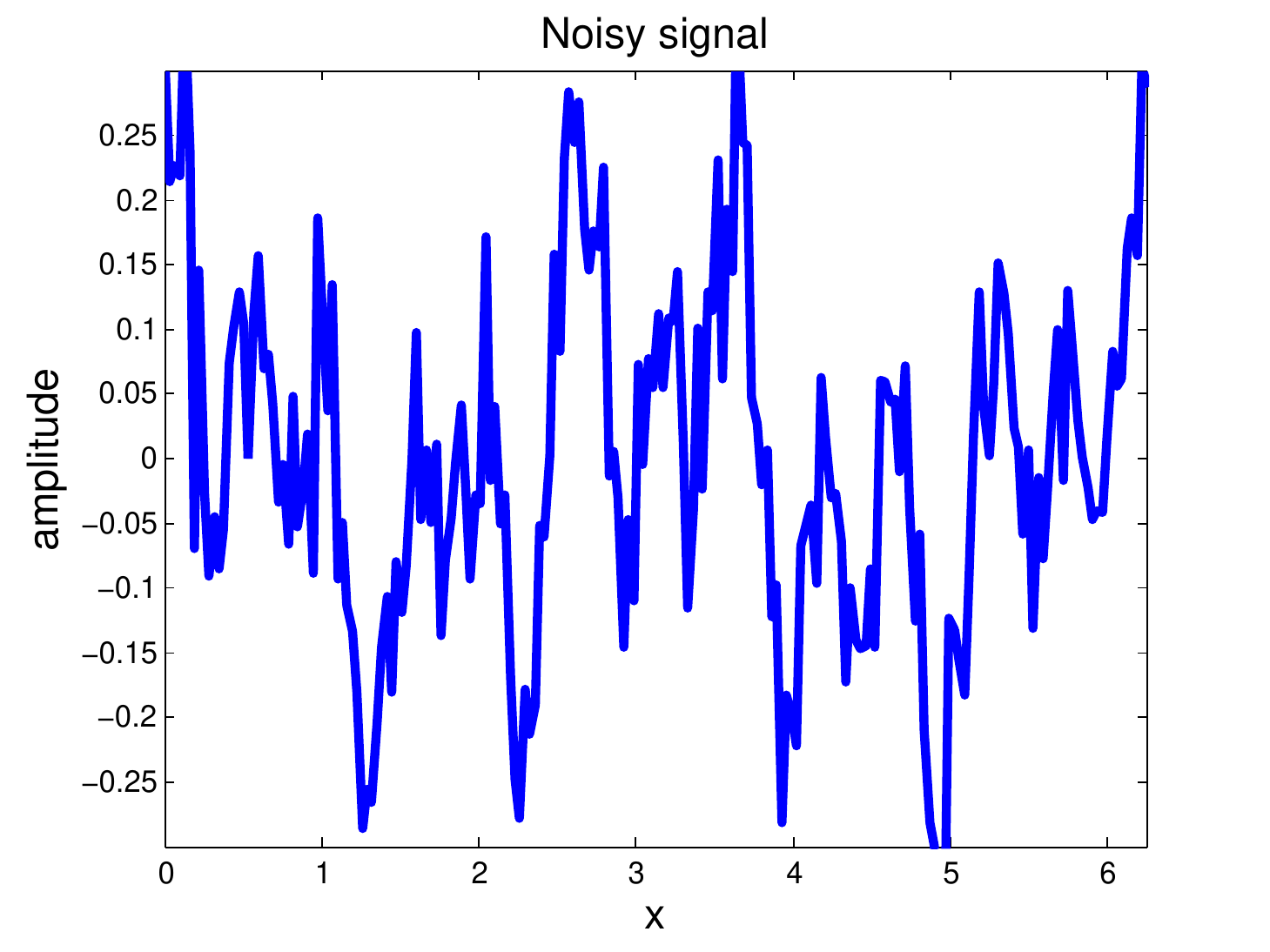}
\end{center}
\end{minipage}
\begin{minipage}[t]{\dctFigureWidth}
\begin{center}
\includegraphics[width=1\textwidth]{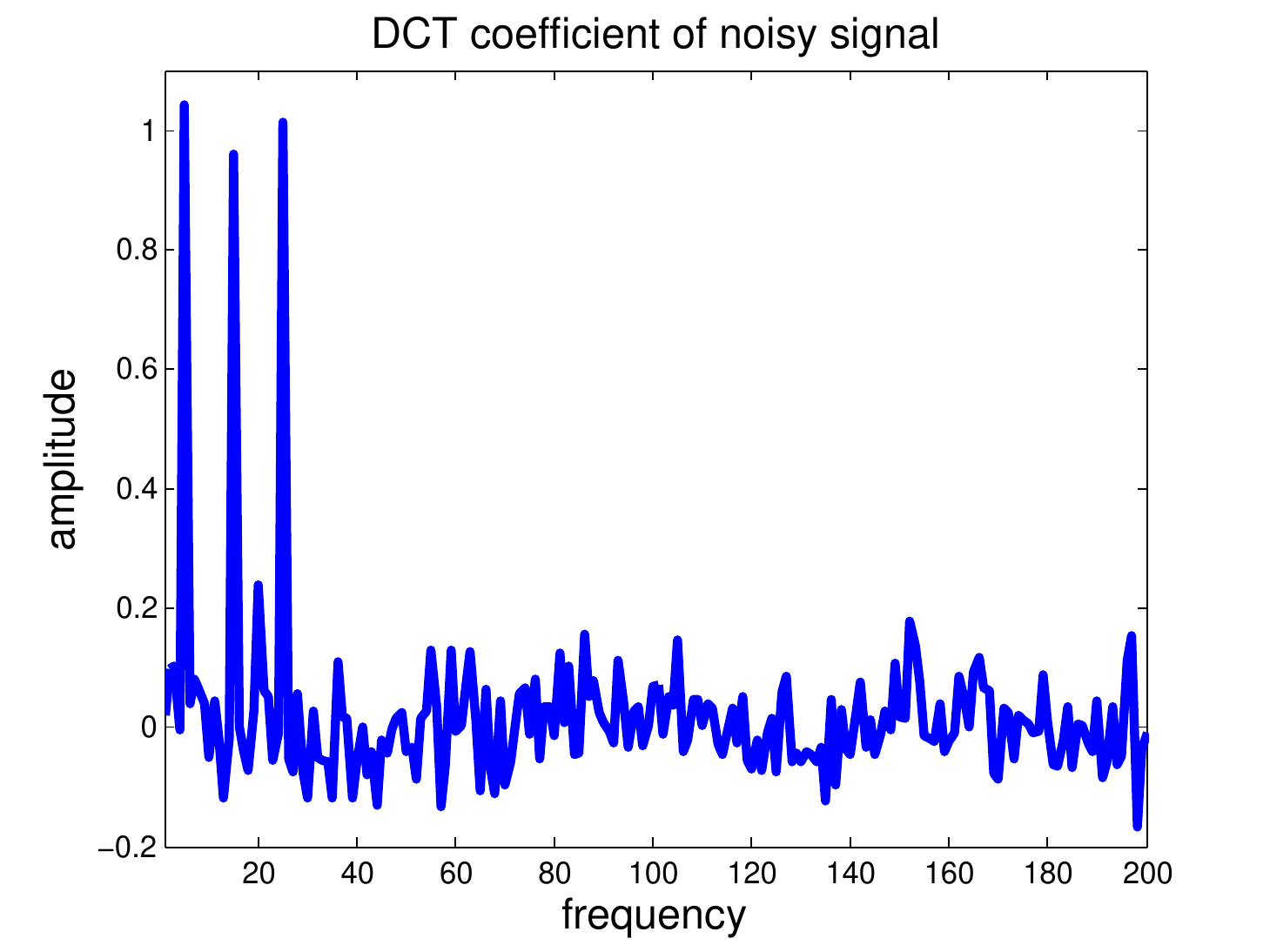}
\end{center}
\end{minipage}\\
\begin{minipage}[t]{\dctFigureWidth}
\vspace{-0.5cm}
\begin{center} a) Clean signal
\end{center}
\end{minipage}
\begin{minipage}[t]{\dctFigureWidth}
\vspace{-0.5cm}
\begin{center} b) Noisy signal
\end{center}
\end{minipage}
\begin{minipage}[t]{\dctFigureWidth}
\vspace{-0.5cm}
\begin{center} c) Noisy DCT coefficients
\end{center}
\end{minipage} \\

\begin{minipage}[t]{\dctFigureWidth}
\begin{center}
\includegraphics[width=1\textwidth]{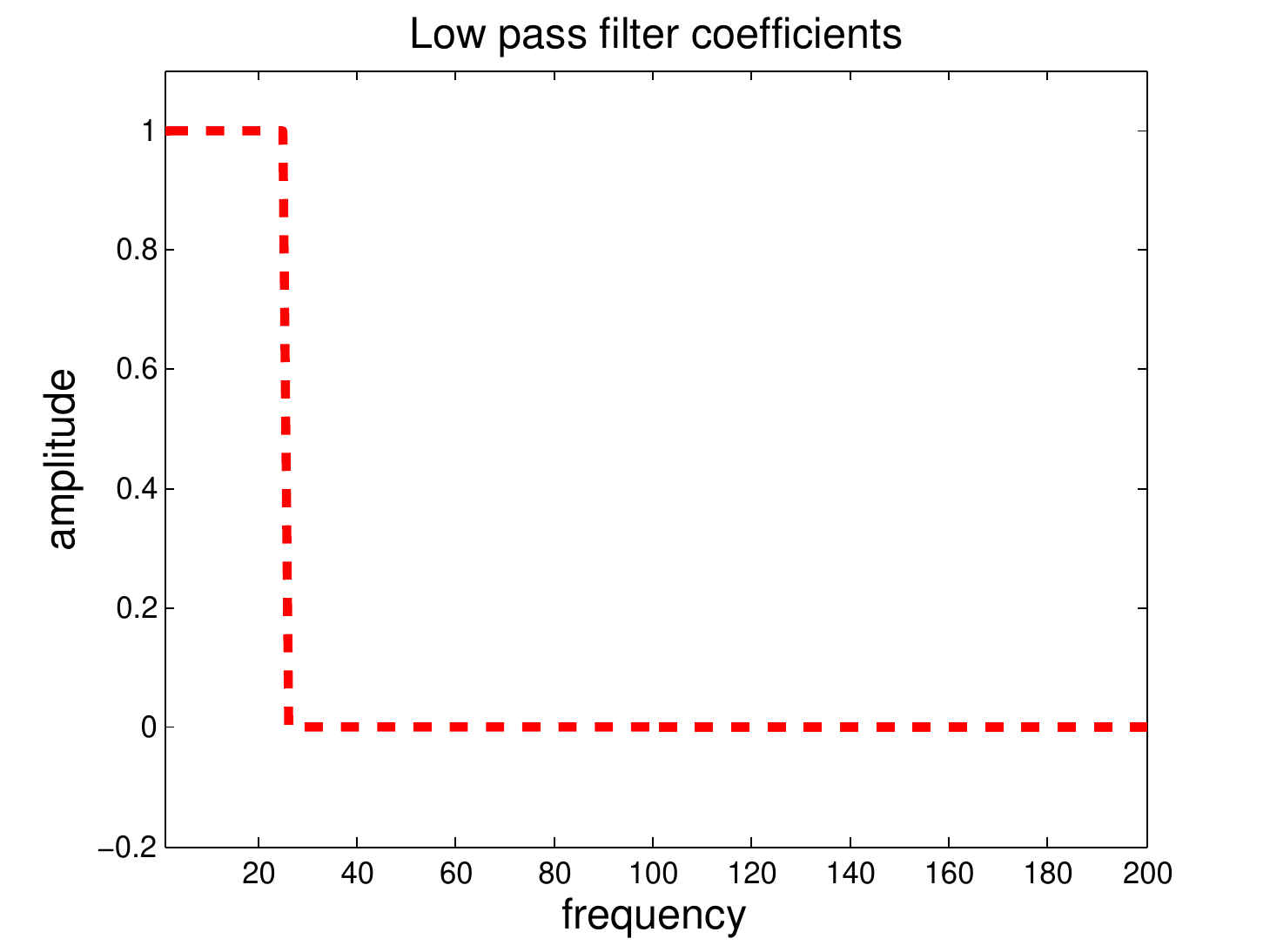}
\end{center}
\end{minipage}
\begin{minipage}[t]{\dctFigureWidth}
\begin{center}
\includegraphics[width=1\textwidth]{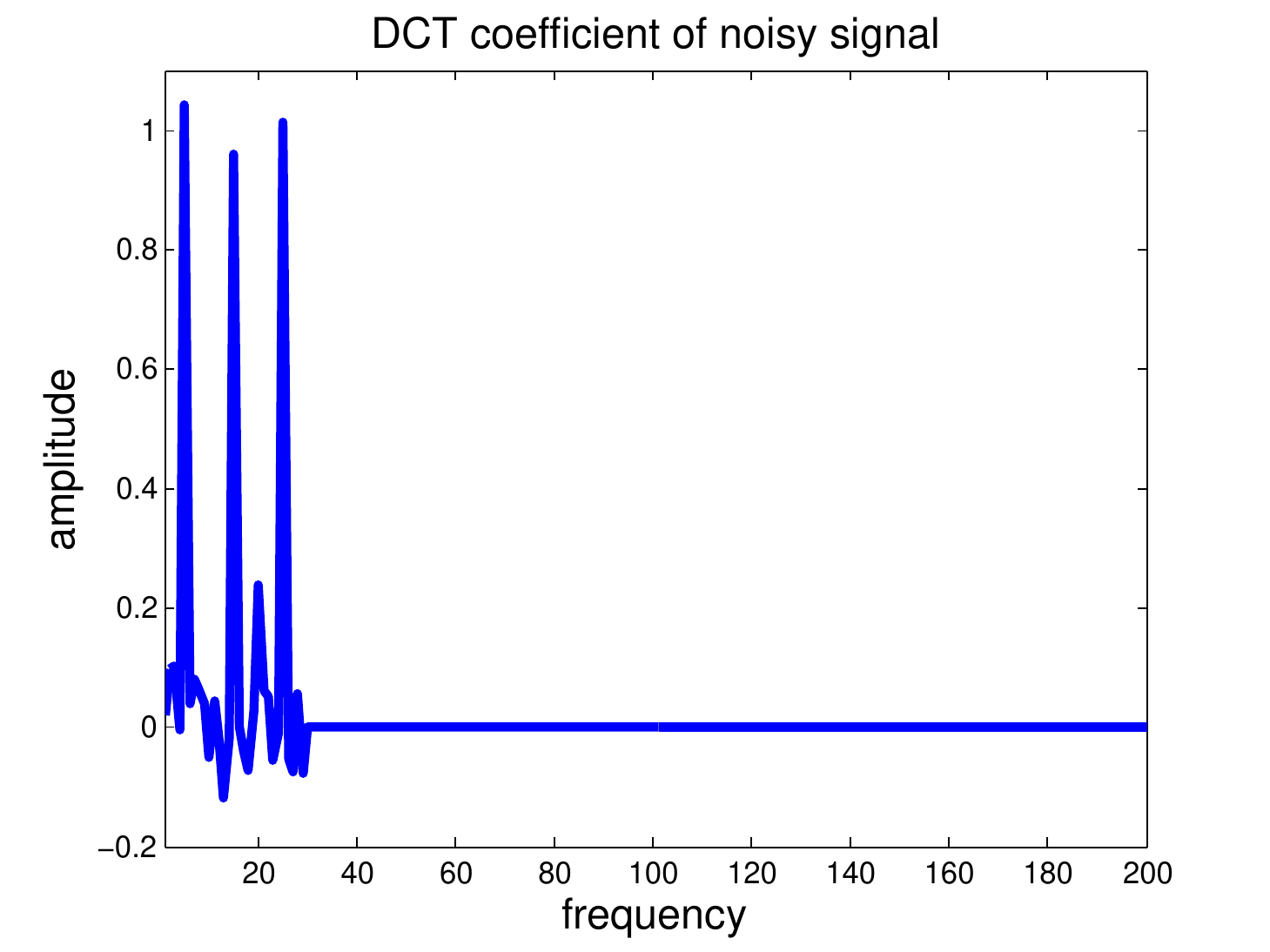}
\end{center}
\end{minipage}
\begin{minipage}[t]{\dctFigureWidth}
\begin{center}
\includegraphics[width=1\textwidth]{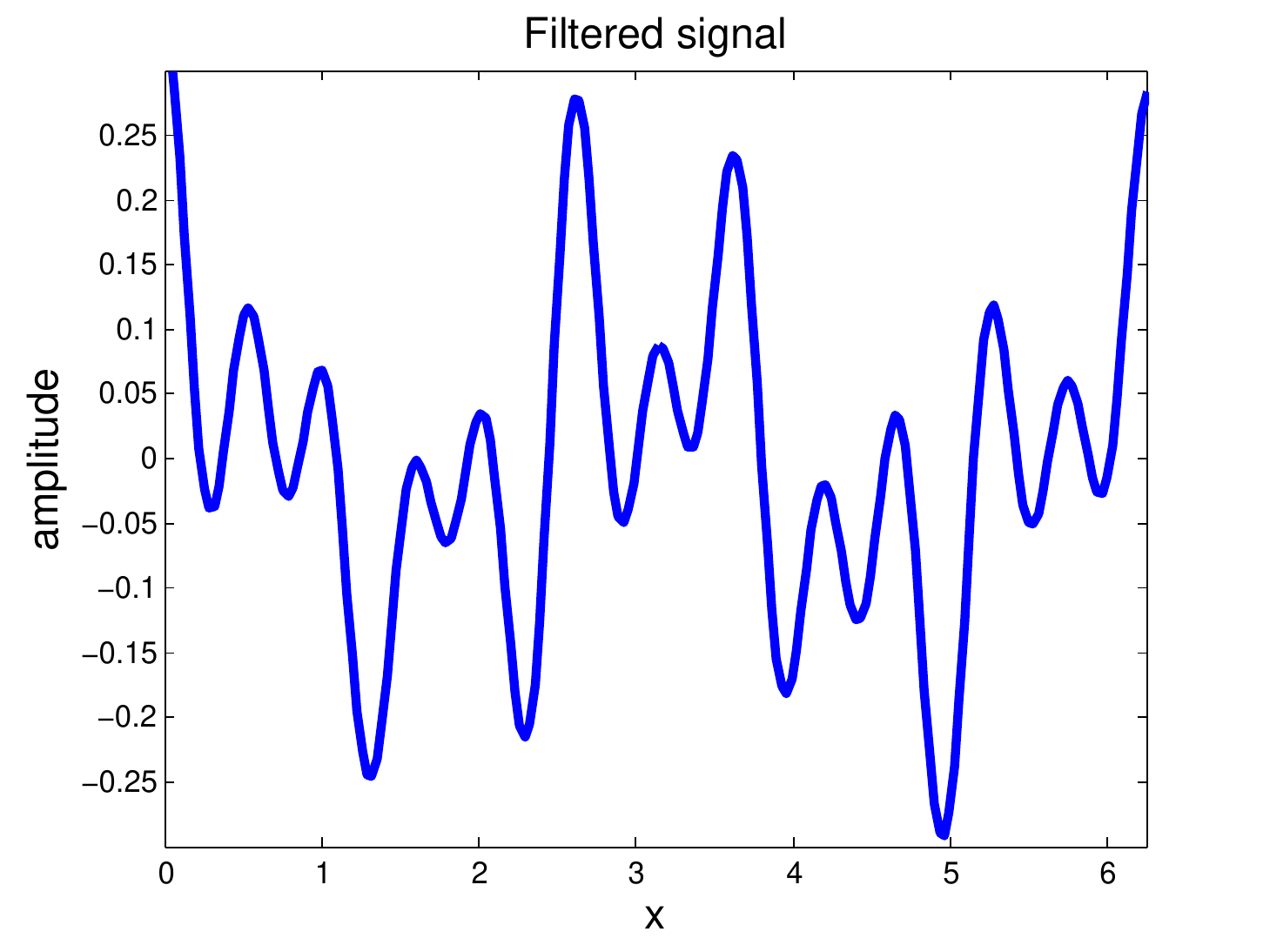}
\end{center}
\end{minipage}\\

\begin{minipage}[t]{\dctFigureWidth}
\vspace{-0.5cm}
\begin{center} d) Low pass filter
\end{center}
\end{minipage}
\begin{minipage}[t]{\dctFigureWidth}
\vspace{-0.5cm}
\begin{center} e) Filtered DCT coefficients
\end{center}
\end{minipage}
\begin{minipage}[t]{\dctFigureWidth}
\vspace{-0.5cm}
\begin{center} f) Denoised signal
\end{center}
\end{minipage}
\caption{Example of classical DCT-based filtering. The noisy input signal is represented as a superposition of cosines. The coefficients are filtered and the inverse transform is applied.}
\label{fig:classical}
\end{center}
\end{figure}
The entire filtering process illustrated in Figure \ref{fig:classical} is a linear operation on the input data. Denoting the orthonormal DCT transformation matrix by $V^T$, we can represent the filtered data $\hat{u}$ in Fig. f as
\begin{align}
\label{eq:linearFilter}
 \hat{u} = V D V^T f =: R(f),
\end{align}
where $D$ is a diagonal matrix containing the filter as its diagonal. In our above example, the diagonal entries of $D$ consist of $j$ ones followed by $n-j$ zeros with $j$ being the cutoff frequency of the ideal lowpass filter. Note that the diagonal elements of $D$ are exactly the eigenvalues of the linear operator $R$. Natural extensions of this theory to the continuous setting can be established for compact linear operators $R$ in Hilbert spaces.

In the last few decades, nonlinear filters based on \textit{variational methods} have become increasingly popular and have replaced linear filtering approaches in many applications. The most widely used variational filtering strategy is to compute
\begin{align}
\label{eq:variationalMethod}
u_{VM}(t) = \arg \min_u \frac{1}{2}\|u-f\|_2^2 + t J(u),
\end{align}
for a suitable proper, convex, lower semi-continuous regularization functional $J: {\cal X} \rightarrow \mathbb{R}^+ \cup \{\infty\}$ defined on Banach space ${\cal X}$ embedded into $L^2(\Omega)$. Examples include the total variation (TV) \cite{rof92}, or sparsity based regularization, e.g. \cite{Mairal08}.

A variant of the above approach is to fix the regularization parameter $t$ and iteratively use the previous denoising result as the input data $f$. Considering the continuous limit for small times $t$ leads to the \textit{scale space} or \textit{gradient flow}
\begin{align}
\label{eq:scaleSpace}
\partial_t u_{GF}(t) = -p_{GF}(t), \qquad p_{GF}(t) \in \partial J(u_{GF}(t)), \ u(0)=f.
\end{align}
We refer the reader to \cite{wk_book,ak_book02} for an overview of scale space techniques.

Due to the systematic bias or loss of contrast introduced by the techniques \eqref{eq:variationalMethod} and \eqref{eq:scaleSpace}, Osher et al. proposed the \textit{Bregman iteration} \cite{Bregman_obgxy} which -- in the continuous limit -- leads to the \textit{inverse scale space flow }

\begin{align}
\label{eq:inverseScaleSpace}
\partial_s q_{IS}(s) = f-v_{IS}(s), \qquad q_{IS}(s) \in \partial J(v_{IS}(s)), \ q_{IS}(0)=0,
\end{align}
and avoids the systematic error.

Until very recently, nonlinear variational methods such as \eqref{eq:variationalMethod}, \eqref{eq:scaleSpace}, and \eqref{eq:inverseScaleSpace} have been treated independent of the classical linear point of view of changing the representation of the input data, filtering the resulting representation and inverting the transform. In \cite{Gilboa_SSVM_2013_SpecTV,Gilboa_spectv_SIAM_2014} the use of \eqref{eq:scaleSpace} in the case of $J$ being the total variation (TV) to define a TV spectral representation of images was proposed, which allows to extend the idea of filtering approaches from the linear to the nonlinear case. We investigated further extensions of his framework by considering all three possible approaches \eqref{eq:variationalMethod}, \eqref{eq:scaleSpace}, and \eqref{eq:inverseScaleSpace} and general convex absolutely one-homogeneous regularization functionals in the conference proceedings \cite{spec_one_homog} and gave an extended overview over the related ideas in \cite{jmivPreprint}.

\subsection{What Is a Spectral Representation?}
\label{sec:WhatIsSpectral}


We would like to define more precisely what we refer to as a \emph{spectral representation}, in which linearity is not assumed, and hence it
can be generalized to the nonlinear (in our case convex) setting.
One notion being particularly important is the concept of nonlinear eigenfunctions induced by convex functionals.
Given a convex functional $J(u)$ and its subdifferential  $\partial J(u)$, we refer to $u$ with  $\|u\|_2=1$ as an \emph{eigenfunction} if it admits the following eigenvalue problem:
\begin{equation}
\label{eq:ef_problem}
\lambda u  \in \p J(u),
\end{equation}
where $\lambda \in \mathbb{R}$  is the corresponding eigenvalue. In some cases, depending on context, we will use also the term \emph{eigenvector}.

{
Let us detail the idea of nonlinear spectral representations of a function $f$, with respect to a convex functional $J$, in an appropriate Banach space $\mathcal{X}$. Our main motivation is related to the spectral representation of a positive self-adjoint operator $A$ on Hilbert spaces (cf. \cite{dunford1963linear}), i.e. the derivative of a convex quadratic functional, given by vector-valued measure $E_\lambda$ from $\mathbb{R}_+$ to the space of bounded linear operators on $\mathcal{X}$. The latter generalizes the eigenvalue decomposition of compact linear operators and will be illustrated with an example below. The vector-valued measure $E_\lambda$ allows to define functions of the operator $A$ via
\begin{equation}
	\varphi(A) = \int_{\mathbb{R}_+} \varphi(\lambda)~dE_\lambda,
\end{equation}
where $\varphi: \mathbb{R}_+ \rightarrow \mathbb{R}$ is a filter function. One can apply the filter $\varphi(A)$ to elements $f \in \mathcal{X}$ via
\begin{equation}
		\varphi(A)f = \int_{\mathbb{R}_+} \varphi(\lambda)~dE_\lambda \cdot f,
\end{equation}
here denoting by $\cdot$ the scalar product in $\mathcal{X}$. Note that if we are only interested in filtering $f$, we can directly reduce the expression to a vector-valued measure on $\mathcal{X}$
\begin{equation}
	\tilde{\Phi}_s = E_s \cdot f,
\end{equation}
such that
\begin{equation}
		\varphi(A)f = \int_{\mathbb{R}_+} \varphi(t)~d\tilde{\Phi}_s.
\end{equation}
Note that there is a trivial reconstruction of $f$ from $\tilde{\Phi}_s$ by choosing $\varphi\equiv 1$. Moreover, $\tilde{\Phi}_s$ is related to eigenvectors of the operator $A$ in the following way: If $f$ is an eigenvector for the eigenvalue $\lambda$, then $\tilde{\Phi}_s$ is the concentrated vector-valued measure $f ~\delta_\lambda$ ($\Vert \tilde{\Phi}_s\Vert$ is the scalar measure $\Vert f \Vert \delta_\lambda$), with $\delta_\lambda(s)$  corresponding to the Dirac delta distribution centered at $s=\lambda$.

In a nonlinear setting with $\mathcal{X}$ being a Banach space there is no appropriate way to generalize the spectral decomposition $E_s$, but we can try to generalize the data-dependent decomposition $\tilde{\Phi}_s$. Obviously we need to give up the linear dependence of $\tilde{\Phi}_s$ upon $f$, but we can still hope for a simple reconstruction and the relation to eigenvalues.

\begin{definition}\label{def:spectralDecompositionFrequency}
A map from $f \in \mathcal{X}$ to a vector-valued Radon measure $\tilde{\Phi}_s$ on $\mathcal{X}$ is called a {\em spectral (frequency) representation with respect to the convex functional $J$} if the following properties are satisfied:
\begin{itemize}
\item {\bf Eigenvectors as atoms:}
For $f$ satisfying $\norm{f}=1$ and $\lambda f \in \partial J(f)$ the spectral representation is given by $\tilde{\Phi}_s = f ~\delta_\lambda(s)$.

\item {\bf Reconstruction:} The input data $f$, for any $f \in \mathcal{X}$, can be reconstructed by
\begin{equation}
f = \int_0^\infty d\tilde{\Phi}_s .	
\end{equation}

\end{itemize}
\end{definition}

A spectral representation naturally carries a notion of scale $s$, features arising at small $s$ will be referred to as large scale details of $f$, while those at larger $s$ related to small scales.

As an example let $J(u)=\frac{1}{2}\int_\Omega |\nabla u(x)|^2 dx$, then $\p J(u) =\{-\Delta u \} $ and one can see that the decomposition into Laplacian eigenfunctions ($-\Delta u_j = \lambda_j u_j$)
$$ f = \sum_{j=1}^\infty c_j u_j $$
yields a spectral decomposition with
$$ \tilde{\Phi}_s = \sum_{j=1}^\infty c_j u_j \delta(s-\lambda_j). $$

To see the inverse relation between $s$ and the scale of features, it is instructive to consider the polar decomposition of the measure $\tilde{\Phi}_s$ into
\begin{equation} \label{eq:polarDecomposition}
	d\tilde{\Phi}_s = \tilde\psi(s) ~d\Vert \tilde{\Phi}_s \Vert
\end{equation}
 with $\Vert  \tilde\psi(s) \Vert =1$ for $\Vert \tilde{\Phi}_s \Vert$-almost every $s > 0$. To avoid ambiguity we shall use a representation of $\tilde\psi$ that vanishes outside the support of $\tilde{\Phi}_s$ and think of $\psi(s)$ as the representatives of scale $s$. The scalar measure $d\Vert \tilde{\Phi}_s \Vert$ can be considered as the {\em spectrum} of $f$ with respect to the functional $J$.

In our above example, the polar decomposition can simply be identified with the normalized eigenfunctions,
$$\tilde \psi(\lambda_j) = \text{sign}(c_j) u_j, \qquad \Vert \tilde{\Phi}_s \Vert = \sum_{j=1}^\infty \vert c_j \vert \delta(s-\lambda_j). $$
In the simple setting of $\Omega=[0,2 \pi]$ and zero Neumann boundary conditions, we obtain
$$u_j(x) = \frac{1}{\sqrt{\pi}} cos(j x)$$
and can see that the spectral representation is related to the frequency via $\lambda_j = j^2$. Hence, we will refer to the spectral representation of definition \ref{def:spectralDecompositionFrequency} as a \emph{generalized frequency decompositions}.

In further analogy to Fourier methods, we can consider a change of variables from $s$ to $t=\frac{1}s$, which leads to large $t$ corresponding to large scales and small $t$ corresponding to small scales. More precisely, we define
$$ d{\Phi_t} = \frac{1}{s^2} \tilde d\Phi_{1/s} $$
which means that
$$ \int_0^\infty \varphi(s) ~ d \tilde{\Phi}_s = \int_0^\infty \varphi(1/t) ~ d\Phi_t $$
holds for all filter functions $\varphi$. The new vector-valued Radon measure $\Phi$ is an equally meaningful candidate for a spectral decomposition, to which we refer as the \emph{generalized wavelength decomposition}. More formally and in full analogy to definition \ref{def:spectralDecompositionFrequency}, we define:
\begin{definition}\label{def:spectralDecompositionWavelength}
A map from $f \in \mathcal{X}$ to a vector-valued Radon measure $\Phi_t$ on $\mathcal{X}$ is called a {\em spectral (wavelength) representation with respect to the convex functional $J$} if the following properties are satisfied:
\begin{itemize}
\item {\bf Eigenvectors as atoms:}
For $f$ satisfying $\norm{f}=1$ and $\lambda f \in \partial J(f)$ the spectral representation is given by $\Phi_t = f ~\delta_{\frac{1}{\lambda}}(t)$.
\item {\bf Reconstruction:} The input data $f$, for any $f \in \mathcal{X}$, can be reconstructed by
\begin{equation}
f = \int_0^\infty d \Phi_t .	
\end{equation}
\end{itemize}
\end{definition}
Similar to spectral frequency decompositions, the polar decomposition of $\Phi$ in a wavelength decomposition gives rise to normalized functions $\psi(t)$.}

The example of Laplacian eigenfunctions (more general eigenfunction expansions in Hilbert space) has a further orthogonality structure that we did not consider so far. Note that for $j \neq k$ we have $\langle u_j, u_k \rangle_{L^2} = 0$ and we can compute the coefficients as $c_k = \langle f, u_k \rangle$.  This also implies the Parsevel identity
\begin{equation}
	\Vert f \Vert^2 = \sum_{j=1}^\infty |c_k|^2 .
\end{equation}
We generalize these properties to define an orthogonal spectral representation:

\begin{definition}[Orthogonal spectral representation]\
A spectral representation on $\mathcal{X}$ is called an {\em orthogonal spectral representation with respect to the convex functional $J$} if the following properties are satisfied:
\begin{itemize}
\item {\bf Orthogonality of scales:} $\tilde \psi(s_1) \cdot \tilde\psi(s_2) = 0$  if $s_1 \neq s_2$ for $\Vert \tilde\Phi_s \Vert$-almost all $s_1$ and $s_2$.

\item {\bf Generalized Parseval identity:} $\tilde \Phi_s \cdot f$ is a nonnegative measure on $\mathbb{R}^+$  (i.e. $\tilde\psi(s) \cdot f \geq 0$) and
\begin{equation}
\Vert f \Vert^2  = \int_0^\infty d(\tilde \Phi_s \cdot f) = \int_0^\infty (\tilde \psi(s) \cdot f) d\Vert \tilde\Phi_s\Vert.
\end{equation}

\end{itemize}
\end{definition}

Note that the above definition can equivalently be made for orthogonal spectral wavelength representations by removing the tilde and replacing $s$ by $t$.

From the above motivations it becomes natural to define nonlinear filtering of the data $f$ with respect to $J$ via certain integrals with respect to the measure $\tilde \Phi_s$. If we are merely interested in the latter with sufficiently regular filters we can extend from $\tilde \Phi_s$ to vector-valued distributions.

\begin{definition}[Weak spectral representation]\label{def:weakSpectralDecomposition}
A map from $f \in \mathcal{X}$ to a vector-valued distribution $\tilde \phi(s)$ on $\mathcal{X}$ is called a {\em weak spectral representation with respect to the convex functional $J$} if the following properties are satisfied:
\begin{itemize}
\item {\bf Eigenvectors as atoms:} For $f$ satisfying $\norm{f}=1$ and $\lambda f \in \partial J(f)$ the weak spectral representation is given by $\tilde \phi(s) = f ~\delta_\lambda(s)$.

\item {\bf Reconstruction:} The input data $f$ can be reconstructed by
\begin{equation}
f = \int_0^\infty \tilde\phi(s)~ds .	
\end{equation}

\end{itemize}
\end{definition}
Again, exactly the same definition of a weak spectral representation can also be made for spectral wavelength decomposition with an inverse relation between $\lambda$ and $t$ for eigenvectors as atoms.

We mention that throughout the paper we will use a rather formal notation for the distribution $\tilde \phi$ and its wavelength counterpart $\phi$ as in the reconstruction formula above. All integrals we write indeed have to be understood as duality products with sufficiently smooth test functions. We will verify  the well-definedness for the specific spectral representations we investigate below, it will turn out that in all cases it suffices to use test functions in $W^{1,1}_{loc}(\mathbb{R}_+;\mathbb{R}^n)$.

\subsection{Paper goal and main results}
\label{sec:Goal}
In this paper, we extend our previous works \cite{spec_one_homog, jmivPreprint} by several novel theoretical results:

\vspace{1em}
\begin{itemize}
\setlength\itemsep{1em}
\item We prove that all three methods \eqref{eq:variationalMethod}, \eqref{eq:scaleSpace}, and \eqref{eq:inverseScaleSpace} yield a well-defined weak spectral representation for arbitrary convex and absolutely homogeneous functionals $J$ on $\mathbb{R}^n$.

\item We prove that polyhedral regularizations $J$ in finite dimensions lead to spectral decompositions that consist of finitely many delta-peaks for all three approaches, \eqref{eq:variationalMethod}, \eqref{eq:scaleSpace}, and \eqref{eq:inverseScaleSpace}.
\item We analyze the relation between the three possible spectral representations. In particular, we show that
\begin{itemize}
\setlength\itemsep{0.5em}
\item The variational method \eqref{eq:variationalMethod} and the gradient flow \eqref{eq:scaleSpace} coincide under the assumption of a polyhedral regularization $J$ that meets a regularity assumption on its subdifferentials we call (MINSUB).
\item All three methods \eqref{eq:variationalMethod}, \eqref{eq:scaleSpace}, and \eqref{eq:inverseScaleSpace} yield exactly the same orthogonal spectral representation for $J(u) = \|Ku\|_1$ and $KK^* \in \mathbb{R}^{m \times m}$ being diagonally dominant.
\end{itemize}
\item We prove the orthogonality of the spectral decomposition arising from variational methods and the gradient flows for polyhedral regularizations meeting (MINSUB).
\item We prove that the spectral decomposition arising from $J(u) = \|Ku\|_1$ with $KK^* \in \mathbb{R}^{m \times m}$ being diagonally dominant represents the input data as a linear combination of generalized eigenfunctions meeting \eqref{eq:ef_problem}.
\end{itemize}
\vspace{1em}

The main goal of this paper is analyzing and understanding the theory of nonlinear multiscale decompositions. Nevertheless, there is a vast number of possible applications in imaging. As an example, consider Figure \ref{fig:filteringTeaser}: A nonlinear bandstop filter with respect to a spectral TV decomposition was applied to the image in (a) to remove wrinkles and obtain the image shown in (b). Due to the great localization of the TV frequency components, a spatial correspondence (registration) between (a) and (c) allows to synthesize the image in (d) by inserting the filtered frequencies in a straightforward manner.
Preliminary applications for denoising through learned filters \cite{learnedSpectralFilters} and for spatially-varying texture separation \cite{HoreshGilboa_submitted,Horesh_thesis} have been recently proposed by the authors and colleagues.
The focus of this paper is mostly theoretical.

\begin{figure}[h]
\begin{center}
\begin{minipage}[t]{0.22\textwidth}
\begin{center}
\includegraphics[width=1\textwidth]{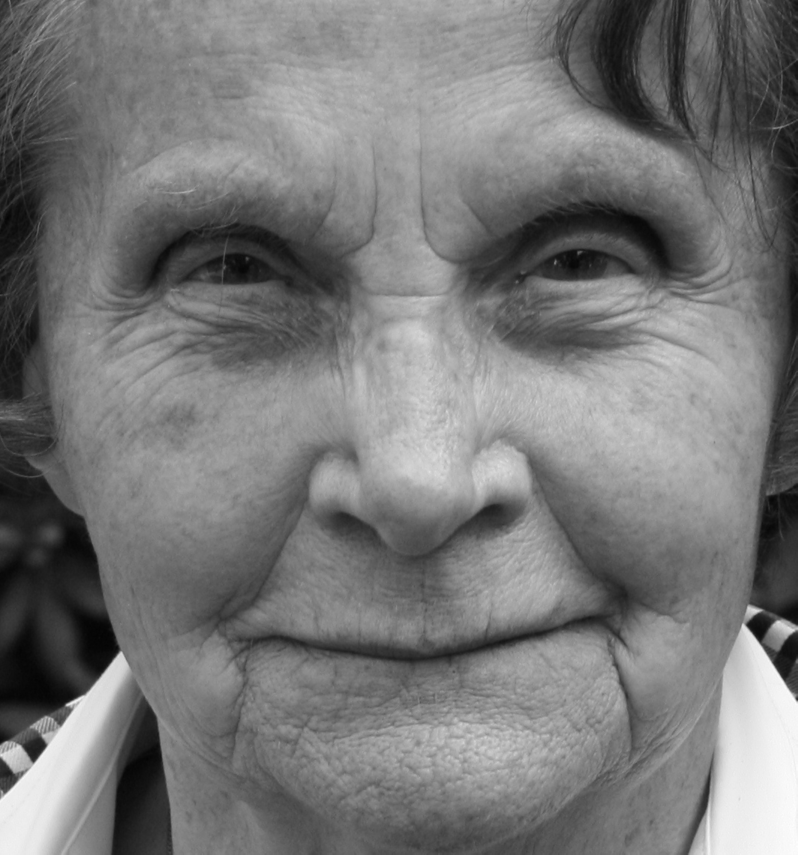}
\end{center}
\end{minipage}
\begin{minipage}[t]{0.22\textwidth}
\begin{center}
\includegraphics[width=1\textwidth]{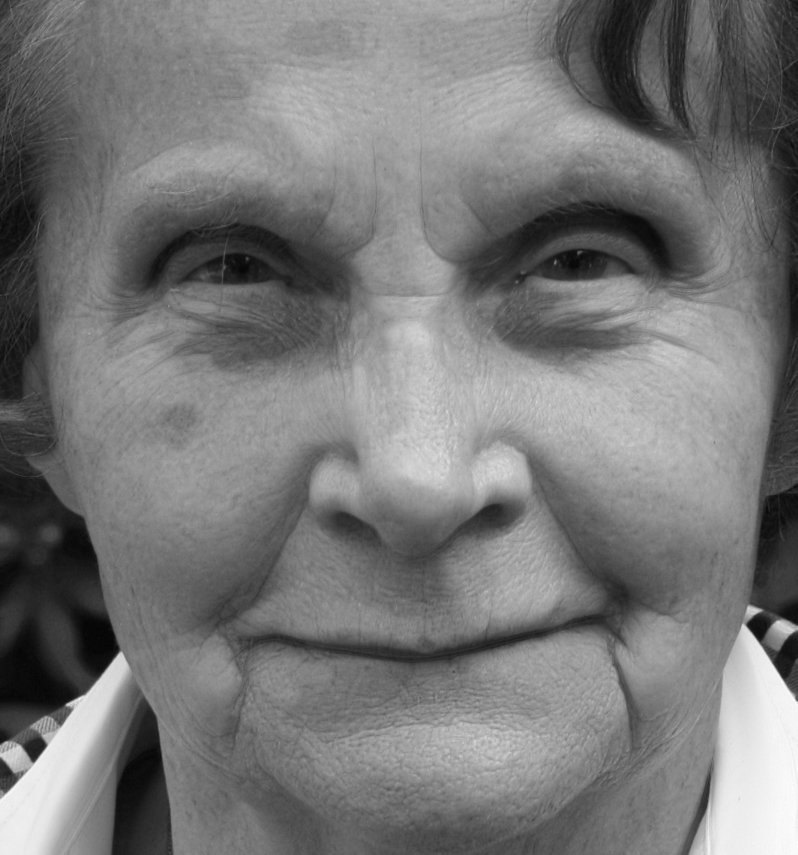}
\end{center}
\end{minipage}
\begin{minipage}[t]{0.26\textwidth}
\begin{center}
\includegraphics[width=1\textwidth]{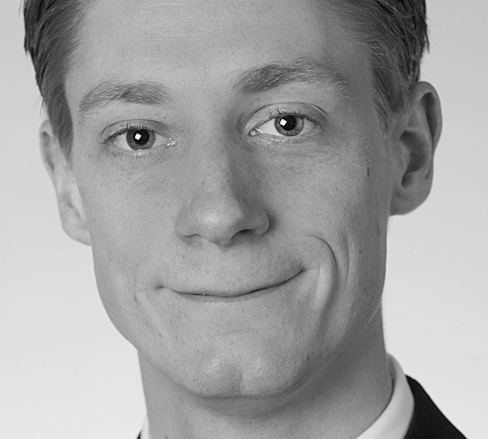}
\end{center}
\end{minipage}
\begin{minipage}[t]{0.26\textwidth}
\begin{center}
\includegraphics[width=1\textwidth]{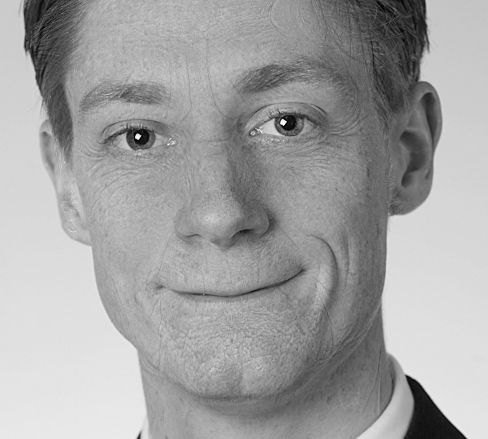}
\end{center}
\end{minipage}\\

\begin{minipage}[t]{0.22\textwidth}
\begin{center} (a) Original image
\end{center}
\end{minipage}
\begin{minipage}[t]{0.22\textwidth}
\begin{center} (b) TV bandstop filtered
\end{center}
\end{minipage}
\begin{minipage}[t]{0.26\textwidth}
\begin{center} (c) Image with known correspondence to (a)
\end{center}
\end{minipage}
\begin{minipage}[t]{0.26\textwidth}
\begin{center} Inserting high frequency components of (a) in (c)
\end{center}
\end{minipage}
\caption{Example application of nonlinear spectral filtering for wrinkle removal and image style transformation: The proposed spectral decomposition framework not only allows to selectively remove certain frequencies so as to reduce the effects of aging (left two images) - it also allows to inject respective frequencies into other images (with known registration) to simulate the effect of aging again.}
\label{fig:filteringTeaser}
\end{center}
\end{figure}

The rest of this paper is organized as follows. First we recall the idea of nonlinear eigenfunctions of convex regularizers in section \ref{sec:eigenfunctions}. In section \ref{sec:Spectral} we recall the different definitions of spectral representations based on \eqref{eq:variationalMethod}, \eqref{eq:scaleSpace}, and \eqref{eq:inverseScaleSpace} and discuss possible definitions of the spectrum of a decomposition. We analyze the similarities between the three approaches in section \ref{sec:analysis} and establish sufficient conditions for their equivalence. We prove that our spectral approaches yield a decomposition of $f$ into generalized eigenvectors for a particular class of regularization functionals. Finally, in section \ref{sec:NumericalResults} we illustrate our theoretical findings with numerical experiments before we draw conclusions and point out directions of future research in section \ref{sec:Conclusions}.

\section{Nonlinear Eigenfunctions of Convex Regularizations}\label{sec:eigenfunctions}
As explained in the previous section, the motivation and interpretation of classical linear filtering strategies is closely linked to the spectral decomposition of positive semidefinite linear operators (derivatives of quadratic functionals). To closely link the proposed nonlinear spectral decomposition framework to the linear one, let us summarize earlier studies concerning the use of nonlinear eigenfunctions in the context of variational methods.
The description and definition of the eigenvalue problem is given in section \ref{sec:WhatIsSpectral}.

Previous studies have mainly examined nonlinear eigenfunctions of regularizers in the context of total-variation (where it was referred to as \emph{calibrable sets}).
Meyer \cite{Meyer[1]} gave an explicit solution to the ROF model \cite{rof92} in the case of a disk (an eigenfunction of TV), explained the loss of contrast which motivated
the use of the $TV-G$ model. Extensive analysis was performed for the TV-flow \cite{tvFlowAndrea2001,andreu2002some,bellettini2002total,Steidl,burger2007inverse_tvflow,discrete_tvflow_2012,tvf_giga2010} where explicit solutions were given in several cases and for various spatial settings. In \cite{iss} an explicit solution of a disk for the inverse-scale-space flow is presented, showing its instantaneous appearance at a precise time point related to its radius and height.

In \cite{bellettini2002total} a precise geometric characterization of TV eigenfunctions is given in the 2D case, let us briefly recall it.
Let $\chi_C \in \mathbb{R}^2$ be a characteristics functions, then it admits \eqref{eq:ef_problem}, with $J$ the TV functional, if
\begin{equation}
\label{eq:eigen_xi}
\textrm{ess} \sup_{q \in \p C}\kappa(q) \le \frac{P(C)}{|C|},
\end{equation}
where $C$ is convex, $\p C \in C^{1,1}$, $P(C)$ is the perimeter of $C$, $|C|$ is the area of $C$
and $\kappa$ is the curvature. In this case the eigenvalue is $\lambda = \frac{P(C)}{|C|}$.

In \cite{Muller_thesis,Benning_highOrderTV_2013} eigenfunctions related to the total-generalized-variation (TGV) \cite{bredies_tgv_2010} and to infimal convolution total variation (ICTV) \cite{Chambolle[2]} are analyzed and their different reconstruction properties on particular eigenfunctions of the TGV are demonstrated theoretically as well as numerically.
Examples of certain eigenfunctions for different extensions of the TV to color images are given in \cite{collaborativeTV}

The work in \cite{Benning_Burger_2013} considers more general variational reconstruction problems involving a linear operator in the data fidelity term, i.e.,
$$\min_u \frac{1}{2}\|Au-f\|_2^2 + t \ J(u), $$
and generalizes equation \eqref{eq:ef_problem} to
$$ \lambda A^*A u \in \partial J(u), \qquad \|Au\|_2=1,$$
in which case $u$ is called a \textit{singular vector}. Particular emphasis is put on the \textit{ground states}
$$ u^0 = \argmin\limits_{\substack{u \in \text{kern}(J)^\perp,\\ \|Au\|_2=1}} J(u) $$
for semi-norms $J$, which were proven to be singular vectors with the smallest possible singular value. Although the existence of a ground state (and hence the existence of singular vectors) is guaranteed for all non-trivial $J$, it was shown in \cite{Benning_Burger_2013}  that the Rayleigh principle for higher singular values fails. In \cite{jmivPreprint} we showed that the Rayleigh principle for higher eigenvalues even fails in the case where $A$ is the identity. As a consequence, determining or even showing the existence of a basis of singular vectors remains an open problem for general semi-norms $J$.

In \cite{Steidl} Steidl et al. have shown the close relations, and equivalence in a 1D discrete
setting, of the Haar wavelets to both TV regularization and TV flow.
This was later developed for a 2D setting in \cite{Steidl_2D}.
In the field of morphological signal processing, nonlinear transforms were introduced
in \cite{Dorst_Boomgaard_Slope_trans_1994,Kothe_Morph_SS_ECCV1996}.

\section{Spectral Representation}
\label{sec:Spectral}
Throughout this section we will assume the regularization $J$ to be a proper, convex, lower semi-continuous function on $\R^n$.  In this finite-dimensional setting the well-posedness of the variational problems \eqref{eq:variationalMethod}, and the differential inclusions \eqref{eq:scaleSpace}, and \eqref{eq:inverseScaleSpace} follows immediately from the above assumptions. Furthermore, we will assume $J$ to be absolutely one-homogeneous, i.e.
\begin{equation}
	J(s u ) = |s| J(u) \qquad \forall~s \in \mathbb{R}, \forall~u \in {\cal X}.
\end{equation}

In the next subsection we summarize some important properties of absolutely one-homogeneous functionals. In the following three subsections we will discuss the spectral representation for the three approaches \eqref{eq:variationalMethod}, \eqref{eq:scaleSpace}, and \eqref{eq:inverseScaleSpace} separately, before analyzing their relation in the section thereafter.

\subsection{Properties of Absolutely One-homogeneous Regularizations}

Under our above assumptions we can interpret $J$ as a seminorm, respectively a norm on an appropriate subspace.
\begin{lemma}\
A functional $J$ as above is a seminorm and its nullspace
$$ {\cal N}(J) = \{ u \in \mathbb{R}^n ~|~J(u) = 0 \} $$ is a linear subspace.
In particular there exist constants $0 < c_0 \leq C_0$ such that
\begin{equation} \label{normequivalence}
	c_0 \Vert u \Vert \leq J(u) \leq C_0 \Vert u \Vert, \qquad \forall~u \in {\cal N}(J)^\perp.
\end{equation}
\end{lemma}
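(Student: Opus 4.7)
The plan is to prove the three assertions (seminorm, linear nullspace, norm equivalence) in that order, each using only the previous one together with elementary convex analysis in finite dimensions.

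First I would verify the seminorm properties. Absolute homogeneity is assumed, so what remains is nonnegativity and the triangle inequality. From absolute homogeneity with $s=0$ one gets $J(0)=0$; convexity applied to the midpoint of $u$ and $-u$ then gives
\[
0 = J(0) = J\bigl(\tfrac{1}{2}u + \tfrac{1}{2}(-u)\bigr) \le \tfrac{1}{2}\bigl(J(u) + J(-u)\bigr) = J(u),
\]
which yields $J\ge 0$. For subadditivity, I would rewrite $J(u+v) = 2\,J\bigl(\tfrac{u+v}{2}\bigr)$ via homogeneity and apply convexity at $\lambda = \tfrac{1}{2}$ to obtain $J(u+v)\le J(u)+J(v)$.

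Next, the nullspace is a linear subspace: if $u,v\in\mathcal{N}(J)$ and $\alpha,\beta\in\mathbb{R}$, combining the triangle inequality and absolute homogeneity gives
\[
0 \le J(\alpha u + \beta v) \le |\alpha|J(u) + |\beta|J(v) = 0,
\]
so $\alpha u + \beta v \in \mathcal{N}(J)$.

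For the norm equivalence \eqref{normequivalence}, I would exploit finite dimensionality. The effective domain $\mathrm{dom}(J)$ is a linear subspace by the seminorm property just established; on this subspace the convex functional $J$ is finite, hence continuous (a finite convex function on a finite-dimensional space is continuous on the interior of its domain). The statement \eqref{normequivalence} is made on $\mathcal{N}(J)^\perp$, and one checks that $J$ is finite on this subspace — for instance, under the implicit assumption $\mathrm{dom}(J)=\mathbb{R}^n$ used throughout the section, or by restricting to $\mathrm{dom}(J)\cap\mathcal{N}(J)^\perp$. Because of absolute homogeneity, $J(u)/\|u\| = J(u/\|u\|)$ for $u\ne 0$, so the bounds reduce to bounds on the compact set $S := \{u\in\mathcal{N}(J)^\perp : \|u\|=1\}$. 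Continuity of $J$ yields $C_0 := \max_{u\in S} J(u) < \infty$. For the lower bound, note that $J(u)=0$ forces $u\in\mathcal{N}(J)$, but $\mathcal{N}(J)\cap\mathcal{N}(J)^\perp = \{0\}$, so $J>0$ on the compact set $S$, and hence $c_0 := \min_{u\in S} J(u) > 0$.

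The only nontrivial point is ensuring $J$ is finite (and thus continuous) on $\mathcal{N}(J)^\perp$; this is the place where one must either invoke the standing convention that $J$ is finite-valued on $\mathbb{R}^n$ in this section, or silently restrict to $\mathrm{dom}(J)$, using the fact that $\mathrm{dom}(J)$ is a linear subspace containing $\mathcal{N}(J)$ as a subspace. Every other step is a direct manipulation using the homogeneity identity $J(tu)=|t|J(u)$ together with midpoint convexity and compactness of spheres in $\mathbb{R}^n$.
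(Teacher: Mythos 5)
Your proposal is correct and follows essentially the same route as the paper: the triangle inequality via $J(u+v)=2J(\tfrac12 u+\tfrac12 v)$, linearity of the nullspace as a direct consequence, and the bounds \eqref{normequivalence} from norm equivalence on the finite-dimensional space ${\cal N}(J)^\perp$. The only difference is that you unpack the compactness/continuity argument behind that equivalence (and flag the finiteness of $J$ needed for it), where the paper simply cites the standard fact.
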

\begin{proof}
First of all we observe that $J$ is nonnegative and absolutely one-homogeneous due to our above definitions, so that it suffices to verify the triangle inequality. From the convexity and absolute one-homogeneity we have for all $u, v \in {\cal X}$
$$ J(u+v) = 2 \: J\left(\frac{1}2 u + \frac{1}2 v\right) \leq 2\left(\frac{1}2J(u)+ \frac{1}2 J(v)\right) = J(u) + J(v). $$
The fact that the nullspace is a linear subspace is a direct consequence, and the estimate \eqref{normequivalence} follows from the norm equivalence in finite dimensional space.
\end{proof}

\begin{lemma} \label{lem:nullLemma}
Let $J$ be as above, then for each $u \in \mathbb{R}^n$ and $v \in {\cal N}(J)$ the identity
\begin{equation}
	J(u+v) = J(u)
\end{equation}
holds.
\end{lemma}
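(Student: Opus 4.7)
The plan is to obtain the equality by applying the subadditivity (triangle inequality) established in the preceding lemma twice, once in each direction, exploiting the fact that $J(v) = 0$ whenever $v \in \mathcal{N}(J)$.

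First I would use subadditivity directly to get the upper bound: since $J$ is a seminorm by the previous lemma,
\begin{equation*}
J(u+v) \leq J(u) + J(v) = J(u),
\end{equation*}
because $v \in \mathcal{N}(J)$ means $J(v) = 0$.

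For the reverse inequality I would write $u = (u+v) + (-v)$ and apply subadditivity again, together with absolute one-homogeneity to handle the sign:
\begin{equation*}
J(u) \leq J(u+v) + J(-v) = J(u+v) + |-1|\,J(v) = J(u+v).
\end{equation*}
Combining the two inequalities gives $J(u+v) = J(u)$.

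There is no real obstacle here: the statement is an immediate consequence of the seminorm property plus absolute one-homogeneity, both of which have already been established. The only subtlety worth mentioning is the use of $J(-v) = J(v) = 0$, which is why the absolute (rather than just positive) one-homogeneity matters; if $J$ were only positively one-homogeneous, the argument would still go through because $v \in \mathcal{N}(J)$ is a linear subspace so $-v \in \mathcal{N}(J)$ as well.
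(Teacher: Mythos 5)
Your proof is correct and matches the paper's own argument exactly: both apply the triangle inequality in each direction, using $J(v) = J(-v) = 0$ for $v \in \mathcal{N}(J)$. Nothing to add.
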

\begin{proof}
Using the triangle inequality we find
\begin{align*}
J(u+v) &\leq  J(u) + J(v) = J(u), \\
J(u) &= J(u+v-v) \leq J(u+v) + J(-v) = J(u+v),
\end{align*}
which yields the assertion.
\end{proof}

We continue with some properties of subgradients:

\begin{lemma} \label{subgradientlemma1}
Let  $u \in \mathbb{R}^n$, then $p \in \partial J(u)$ if and only if
\begin{equation}
	J^*(p) = 0 \quad \text{and} \quad \langle p, u \rangle = J(u).
\end{equation}
\end{lemma}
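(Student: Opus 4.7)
The plan is to use the Fenchel--Young equality characterization of subgradients together with the fact that the convex conjugate of an absolutely one-homogeneous functional is the indicator function of a closed convex set, so in particular only takes the values $0$ and $+\infty$.

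Concretely, I would first recall that for proper, convex, lower semi-continuous $J$, one has the equivalence
\begin{equation*}
p \in \partial J(u) \iff J(u) + J^*(p) = \langle p, u \rangle.
\end{equation*}
The key structural observation is then that absolute one-homogeneity of $J$ forces $J^*$ to be $\{0,+\infty\}$-valued. Indeed, for any $u$ and any $t > 0$,
\begin{equation*}
\langle p, tu \rangle - J(tu) = t\bigl(\langle p, u \rangle - J(u)\bigr),
\end{equation*}
so $J^*(p) = \sup_u \bigl(\langle p, u\rangle - J(u)\bigr)$ is either $+\infty$ (whenever some $u$ yields a positive value in the bracket) or $\le 0$; and since $u = 0$ is admissible with value $0$ (noting $J(0)=0$ by one-homogeneity), we get $J^*(p) \in \{0,+\infty\}$, with $J^*(p) = 0$ precisely when $\langle p,u\rangle \le J(u)$ for all $u$.

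From here the two directions are short. If $J^*(p) = 0$ and $\langle p, u\rangle = J(u)$, then trivially $J(u) + J^*(p) = \langle p,u\rangle$, so $p \in \partial J(u)$. Conversely, if $p \in \partial J(u)$, then $J(u) + J^*(p) = \langle p,u\rangle$ is finite, so $J^*(p)$ cannot be $+\infty$; hence $J^*(p) = 0$, and substituting back gives $\langle p,u\rangle = J(u)$.

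There is not really a major obstacle here: the only nontrivial ingredient is the dichotomy $J^*(p)\in\{0,+\infty\}$, which follows from a one-line scaling argument exploiting absolute one-homogeneity. The rest is a direct application of Fenchel--Young. If I wanted to avoid invoking Fenchel--Young, I could instead unfold the definition of the subdifferential: $p \in \partial J(u)$ means $J(v) \ge J(u) + \langle p, v-u\rangle$ for all $v$; plugging $v = 0$ and $v = 2u$ yields $\langle p,u\rangle = J(u)$, and plugging $v = tw$ for $t>0$ and arbitrary $w$ and letting $t\to\infty$ yields $\langle p,w\rangle \le J(w)$, i.e.\ $J^*(p) \le 0$, hence $= 0$. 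This self-contained route would be my preferred presentation.
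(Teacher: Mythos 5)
Your proof is correct. The paper states Lemma \ref{subgradientlemma1} without proof, so there is nothing to compare line by line; your argument is the standard one and is complete. The only substantive ingredient, the dichotomy $J^*(p)\in\{0,+\infty\}$ obtained from the scaling identity $\langle p,tu\rangle - J(tu) = t(\langle p,u\rangle - J(u))$, is precisely what the paper proves later as Lemma \ref{lem:Jstar} (that $J^*$ is the characteristic function of $\partial J(0)$), and combining it with Fenchel--Young yields the claim; your alternative self-contained route via $v=0$, $v=2u$, and $v=tw$ is equally valid and matches the paper's reformulation \eqref{eq:subdifferential}.
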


A common reformulation of Lemma \ref{subgradientlemma1} is the characterization of the subdifferential of an absolutely one-homogeneous $J$ as
\begin{align}
\label{eq:subdifferential}
\partial J(u) = \{ p \in {\cal X}^* ~|~ J(v) \geq \langle p, v \rangle \ \forall v \in {\cal X}, ~ J(u) = \langle p, u \rangle \}.
\end{align}
\begin{remark}
\label{rem:subdiffRemark}
A simple consequence of the characterization of the subdifferential of absolutely one-homogeneous functionals is that any $p \in \partial J(0)$ with $p \notin \partial J(u)$ meets $J(u) - \langle p, u \rangle >0$.
\end{remark}

Additionally, we can state a property of subgradients relative to the nullspace of $J$ with a straightforward proof:
\begin{lemma} \label{subgradientlemma3}
Let $p \in \partial J(0)$ and $J(u)  = 0$, then $\langle p, u \rangle = 0$. Consequently
$\partial J(u) \subset \partial J(0) \subset {\cal N}(J)^\bot$ for all $u \in \mathbb{R}^n$.
\end{lemma}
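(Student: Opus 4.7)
The plan is to use the characterization \eqref{eq:subdifferential} of the subdifferential as the single workhorse throughout, so the entire proof reduces to manipulating the two defining conditions.

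First I would establish the scalar identity $\langle p, u \rangle = 0$ under the hypotheses $p \in \partial J(0)$ and $J(u) = 0$. The first piece of \eqref{eq:subdifferential} applied at $v = u$ gives $\langle p, u \rangle \leq J(u) = 0$, while applying the same inequality at $v = -u$, combined with absolute one-homogeneity $J(-u) = J(u) = 0$, gives $-\langle p, u \rangle \leq 0$. Combining both bounds yields the claim.

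Next I would verify the chain of inclusions. For $\partial J(u) \subset \partial J(0)$, note that membership in $\partial J(0)$ only requires the pointwise inequality $J(v) \geq \langle p, v \rangle$ for all $v$ (the equality $J(0) = \langle p, 0 \rangle$ is automatic). Any $p \in \partial J(u)$ satisfies this inequality by definition, so it lies in $\partial J(0)$. For $\partial J(0) \subset {\cal N}(J)^\perp$, fix $p \in \partial J(0)$ and any $u \in {\cal N}(J)$; the first part of the lemma immediately gives $\langle p, u \rangle = 0$, so $p \perp {\cal N}(J)$.

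Since both steps are direct consequences of the subdifferential characterization and the absolute one-homogeneity of $J$, I do not anticipate a genuine obstacle; the only subtlety worth flagging is that one must remember to invoke $J(-u)=J(u)$ to turn the one-sided subgradient inequality into a two-sided bound, which is precisely where absolute (as opposed to merely positive) one-homogeneity is used.
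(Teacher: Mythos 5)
Your proof is correct and is exactly the ``straightforward'' argument the paper has in mind (the paper states this lemma without proof): the two-sided bound from the subgradient inequality at $v=u$ and $v=-u$ together with $J(-u)=J(u)$ gives $\langle p,u\rangle=0$, and the inclusions follow directly from the characterization \eqref{eq:subdifferential}. No issues.
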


The nullspace of $J$ and its orthogonal complement will be of further importance in the sequel of the paper. In the following we will denote the projection operator onto ${\cal N}(J)$ by $P_0$ and to ${\cal N}(J)^\bot$ by $Q_0 = Id - P_0$. Note that as a consequence of Lemma \ref{lem:nullLemma} we have $J(u)=J(Q_0 u)$ for all $u \in \mathbb{R}^n$.

\begin{lemma} \label{subgradientlemma2}
For absolutely one-homogeneous $J$ the following identity holds
\begin{equation}
	\bigcup_{u \in \mathbb{R}^n} \partial J(u) = \partial J(0) = \{p \in \mathbb{R}^n~|~J^*(p) = 0 \}.
\end{equation}
Moreover $\partial J(0)$ has nonempty relative interior in ${\cal N}(J)^\bot$ and for
any $p$ in the relative interior of $\partial J(0)$ we have $p \in \partial J(u)$ if and only if $J(u)=0$.
\end{lemma}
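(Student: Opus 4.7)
The plan is to handle the three claims one at a time, in the order they appear, and in each case to use the already-established characterization of the subdifferential from Lemma \ref{subgradientlemma1}, namely $p \in \partial J(u) \iff J^*(p)=0 \text{ and } \langle p,u\rangle = J(u)$.

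For the first assertion, the inclusions chain is almost immediate from Lemma \ref{subgradientlemma1}. On the one hand, any $p\in\partial J(u)$ (for any $u$) satisfies $J^*(p)=0$. On the other, for any $p$ with $J^*(p)=0$ the pair $(u,p)=(0,p)$ satisfies both conditions of Lemma \ref{subgradientlemma1} (since $J(0)=0=\langle p,0\rangle$), hence $p\in\partial J(0)$. Thus
\begin{equation*}
\{p : J^*(p)=0\} \subseteq \partial J(0) \subseteq \bigcup_{u} \partial J(u) \subseteq \{p : J^*(p)=0\},
\end{equation*}
forcing equality throughout.

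For the relative interior claim I would argue as follows. By the first lemma of the section, $J$ restricted to $\mathcal{N}(J)^\bot$ is a genuine norm, equivalent to $\|\cdot\|$ (this is \eqref{normequivalence}). The set $\partial J(0)$ coincides with the closed unit ball of the corresponding dual norm after identifying $\mathcal{N}(J)^\bot$ with its own dual: indeed, the definition of $\partial J(0)$ as $\{p : \langle p,v\rangle \le J(v) \ \forall v\}$ combined with Lemma \ref{subgradientlemma3} ($\partial J(0)\subset\mathcal{N}(J)^\bot$) and the orthogonal splitting $v = P_0 v + Q_0 v$ with $J(v) = J(Q_0 v)$ shows that this is equivalent to $\langle p,v\rangle \le J(v) \ \forall v \in \mathcal{N}(J)^\bot$. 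In the finite-dimensional space $\mathcal{N}(J)^\bot$ the unit ball of a norm has nonempty interior, which gives the desired relative interior property in $\mathcal{N}(J)^\bot$.

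The third statement is the delicate part. The direction ``$J(u)=0 \Rightarrow p\in\partial J(u)$'' is easy and needs no relative interior hypothesis: by Lemma \ref{subgradientlemma3}, $\langle p,u\rangle = 0 = J(u)$, and $J^*(p)=0$ since $p\in\partial J(0)$, so Lemma \ref{subgradientlemma1} applies. The converse is where I expect the main obstacle: I would argue by contradiction. Assume $p$ is in the relative interior of $\partial J(0)$ in $\mathcal{N}(J)^\bot$, that $p\in\partial J(u)$, and that $J(u)>0$. Decompose $u=P_0u+Q_0u$; then $J(u)=J(Q_0u)>0$ implies $Q_0 u\neq 0$. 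By the relative interior assumption there exists $\varepsilon>0$ such that $p\pm\varepsilon q\in\partial J(0)$ for $q = Q_0 u/\|Q_0 u\|$. Testing the defining inequality of $\partial J(0)$ at $v=u$ with both signs and using $\langle p,u\rangle=J(u)$ (from $p\in\partial J(u)$) yields
\begin{equation*}
\varepsilon|\langle q,u\rangle| \le J(u) - \langle p,u\rangle = 0,
\end{equation*}
so $\langle q,u\rangle = 0$. But $\langle q,u\rangle = \langle Q_0 u/\|Q_0 u\|,\, P_0 u + Q_0 u\rangle = \|Q_0 u\|>0$, a contradiction. Thus $J(u)=0$, completing the proof.
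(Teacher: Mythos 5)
Your proof is correct and matches the paper's in all essentials: the first identity is obtained from Lemma \ref{subgradientlemma1} exactly as you do it, and the nonempty relative interior rests on the norm equivalence \eqref{normequivalence} in both versions (the paper exhibits the explicit ball $\{p \in {\cal N}(J)^\perp : \Vert p \Vert < c_0\} \subset \partial J(0)$, which is precisely the content of your dual-unit-ball identification). The only difference is in the last step, where the paper perturbs radially --- choosing $c>1$ with $cp \in \partial J(0)$ so that $cJ(u) = c\langle p,u\rangle \le J(u)$ forces $J(u)=0$ --- whereas you perturb in the direction $\pm Q_0 u/\Vert Q_0 u\Vert$ to force $\Vert Q_0 u\Vert = 0$; these are two instances of the same relative-interior argument and both are valid.
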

\begin{proof}
We have $p \in \partial J(0)$ if and only if
$ \langle p, u \rangle \leq J(u) $
for all $u$. Since equality holds for $u=0$ this is obviously equivalent to $J^*(p) = 0$. Since we know from Lemma \ref{subgradientlemma1} that $\partial J(u)$ is contained in $\{p \in \mathbb{R}^n~|~J^*(p) = 0 \}$ and the union also includes $u=0$ we obtain the first identity.
Let $p \in {\cal N}(J)^\bot$ with $\Vert p \Vert < c_0$ sufficiently small. Then we know by the Cauchy-Schwarz inequality and \eqref{normequivalence} that
$$ \langle p, u \rangle = \langle p, Q_0 u \rangle \leq \Vert p \Vert ~\Vert Q_0 u \Vert \leq \frac{\Vert p \Vert}{c_0} J(Q_0 u) < J(Q_0 u) = J(u)$$
for all $u$ with $Q_0 u\neq 0$.
Finally, let $p$ be in the relative interior of $\partial J(0)$ and $p \in \partial J(u)$. Since there exists a constant $c>1$ such that $cp \in \partial J(0)$, we find $c \langle p,u \rangle = c J(u) \leq J(u)$ and consequently $J(u) =0$.
\end{proof}
\begin{conclusion}\label{conclu:boundedSubdiff}
Using \eqref{normequivalence} as well as Lemma \ref{subgradientlemma3}, we can conclude that for any $p \in \partial J(0)$ we have
$$ \|p\|^2 \leq J(p) \leq C\|p\|, $$
such that $\|p\|\leq C$ holds for all possible subgradients $p$.
\end{conclusion}

As usual, e.g. in the Fenchel-Young Inequality, one can directly relate the characterization of the subdifferential to the convex conjugate of the functional $J$.
\begin{lemma}
\label{lem:Jstar}
The convex conjugate of an absolutely one-homogeneous functional $J$ is the characteristic function of the convex set $\partial J(0)$.
\end{lemma}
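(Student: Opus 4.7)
The plan is to compute $J^*(p) = \sup_{u \in \mathbb{R}^n} \bigl(\langle p,u\rangle - J(u)\bigr)$ directly and split into the two cases $p \in \partial J(0)$ and $p \notin \partial J(0)$, showing the supremum is $0$ in the first case and $+\infty$ in the second.

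First, I would handle the case $p \in \partial J(0)$. By Lemma \ref{subgradientlemma2} this is exactly the set $\{p : J^*(p) = 0\}$, so there is nothing to prove; alternatively, the defining inequality $\langle p,u\rangle \leq J(u)$ for all $u$ gives $J^*(p) \leq 0$, and taking $u=0$ shows $J^*(p) \geq 0$, hence $J^*(p) = 0$.

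Next, for $p \notin \partial J(0)$, by the characterization recalled above there must exist some $u_0 \in \mathbb{R}^n$ with $\langle p, u_0 \rangle > J(u_0)$. I would then exploit absolute one-homogeneity: for any $\alpha > 0$,
\begin{equation*}
\langle p, \alpha u_0 \rangle - J(\alpha u_0) = \alpha\bigl(\langle p, u_0 \rangle - J(u_0)\bigr),
\end{equation*}
and since the right-hand side tends to $+\infty$ as $\alpha \to \infty$, we conclude $J^*(p) = +\infty$. Combining the two cases yields $J^* = \chi_{\partial J(0)}$ in the convex-analytic sense.

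There is no real obstacle here — the argument is a one-line consequence of Lemma \ref{subgradientlemma2} together with the scaling behavior of $J$. The only thing to be careful about is to state clearly which notion of ``characteristic function'' is meant (the convex-analysis indicator, taking values in $\{0,+\infty\}$) so that the equivalence with the supremum formula is transparent.
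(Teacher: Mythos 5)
Your proof is correct and follows essentially the same route as the paper: both split into the cases $p \in \partial J(0)$ and $p \notin \partial J(0)$, obtain $J^*(p)=0$ in the first case from the defining inequality together with the choice $u=0$, and use the scaling $\langle p, \alpha u_0\rangle - J(\alpha u_0) = \alpha(\langle p, u_0\rangle - J(u_0))$ to force $J^*(p)=+\infty$ in the second. No gaps.
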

\begin{proof}
Due to \eqref{eq:subdifferential} we have
$$\partial J(0) = \{ p \in {\cal X}^* ~|~ J(v) - \langle p, v \rangle \geq 0  \ \forall v \in {\cal X}\}.$$
The definition
$$J^*(p) = \sup_u (\langle p, u \rangle - J(u)) $$
tells us that if $p \in \partial J(0)$ the above supremum is less or equal to zero and the choice $u=0$ shows that $J^*(p)=0$. If $p \notin \partial J(0)$ then there exist a $u$ such that $\langle p, u \rangle - J(u)>0$ and the fact that
$$ \langle p, \alpha u \rangle - J( \alpha u) = \alpha (\langle p, u \rangle - J(u)) $$
holds for $\alpha \geq 0$ yields $J^*(p)=\infty$.
\end{proof}

\subsection{Variational Representation}
\label{sec:SpectralVariational}
In this section we would like to describe how to define a spectral representation based on the variational method \eqref{eq:variationalMethod}. As discussed in Section \ref{sec:WhatIsSpectral} we would like to establish the following analogy to the linear spectral analysis: Eigenfunctions meeting \eqref{eq:ef_problem} should be fundamental atoms of the spectral decomposition. It is easy to verify (cf. \cite{Benning_Burger_2013}) that for $f$ being an eigenfunction the solution to \eqref{eq:variationalMethod} is given by
\begin{align*}
u_{VM}(t) = \left\{
\begin{array}{cc}
(1 -  t \lambda ) f & \text{ for } t\leq \frac{1}{\lambda}, \\
0 & \text{ else. }
\end{array}
\right.
\end{align*}
Since $u_{VM}(t)$ behaves piecewise linear in time, and our goal is to obtain a single peak for $f$ being an eigenfunction, it is natural to consider the second derivative of $u_{VM}(t)$ (in a distributional sense). The latter will yield the (desired) delta distribution $\partial_{tt} u_{VM}(t) = \lambda \delta_{\frac{1}{\lambda}}(t)f$. Consider the desired property of reconstructing the data by integrating the spectral decomposition as discussed in Section \ref{sec:WhatIsSpectral}. We find that in the case of $f$ being an eigenfunction
$$ \int_0^\infty \partial_{tt} u_{VM}(t) ~dt = \lambda f.$$
Therefore, a normalization by multiplying $\partial_{tt} u_{VM}(t)$ with $t$ is required to fully meet the integration criterion. Motivated by the behavior of the variational method on eigenfunctions, we make the following definition:
\begin{definition}[Spectral Representation based on \eqref{eq:variationalMethod}]\
 Let $u_{VM}(t)$ be the solution to \eqref{eq:variationalMethod}. We define
\begin{align}
\label{eq:phiVm}
\phi_{VM}(t) = t \partial_{tt} u_{VM}(t)
\end{align}
to be the \textit{wavelength decomposition} of $f$.
\end{definition}

In the following we will show that the above $\phi_{VM}$ is a weak spectral wavelength decomposition in the sense of definition \ref{def:weakSpectralDecomposition} for any convex absolutely one-homogeneous regularization functional $J$.

As discussed in section \ref{sec:WhatIsSpectral} the term \textit{wavelength} is due to the fact that in the case of $f$ being an eigenvector, we can see that the peak in $\phi_{VM}(t)$ appears at a later time, the smaller $\lambda$ is. The eigenvalue $\lambda$ reflects our understanding of generalized frequencies which is nicely underlined by the fact that $\lambda = J(f)$ holds due to the absolute one-homogenity of $J$. Therefore, we expect contributions of small frequencies at large $t$ and contributions of high frequencies at small $t$, which is the relation typically called wavelength representation in the linear setting.

While we have seen that the definition of \eqref{eq:phiVm} makes sense in the case of $f$ being an eigenfunction, it remains to show this for general $f$. As a first step we state the following property of variational methods with absolutely one-homogeneous regularizations:
\begin{proposition}[Finite time extinction]
\label{prop:finiteTimeExtinction}
Let $J$ be an absolutely one-homogeneous functional, and $f$ be arbitrary. Then there exists a time $T<\infty$ such that $u_{VM}$ determined by \eqref{eq:variationalMethod} meets
$$u_{VM}(T) = P_0(f).$$
\end{proposition}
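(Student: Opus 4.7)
The plan is to use the optimality condition for the variational problem to construct an explicit candidate subgradient that certifies $P_0(f)$ as the minimizer for all sufficiently large $t$. The first-order condition for \eqref{eq:variationalMethod} reads $u_{VM}(t) - f + t\, p(t) = 0$ with $p(t) \in \partial J(u_{VM}(t))$. Hence if I can produce a $p \in \partial J(P_0(f))$ such that $P_0(f) - f + t p = 0$, i.e.\ $p = Q_0(f)/t$, then by strict convexity of $\frac{1}{2}\|\cdot - f\|^2 + tJ(\cdot)$ this candidate is the unique minimizer and $u_{VM}(t) = P_0(f)$.

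So the key step is to check $Q_0(f)/t \in \partial J(P_0(f))$ for $t$ large. Using the characterization \eqref{eq:subdifferential}, this amounts to verifying (i) $J^*(Q_0(f)/t) = 0$, equivalently $Q_0(f)/t \in \partial J(0)$, and (ii) $\langle Q_0(f)/t, P_0(f)\rangle = J(P_0(f)) = 0$. Item (ii) is immediate since $Q_0(f) \in \mathcal{N}(J)^\bot$ is orthogonal to $P_0(f) \in \mathcal{N}(J)$. For (i), I would invoke the estimate established inside the proof of Lemma \ref{subgradientlemma2}: any $q \in \mathcal{N}(J)^\bot$ with $\|q\| < c_0$ satisfies $\langle q, u\rangle < J(u)$ for all $u$ with $Q_0 u \neq 0$, and is therefore a subgradient at $0$. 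Since $Q_0(f)/t \in \mathcal{N}(J)^\bot$, it suffices to choose
\begin{equation*}
T > \frac{\|Q_0(f)\|}{c_0},
\end{equation*}
which guarantees $\|Q_0(f)/T\| < c_0$, hence $Q_0(f)/T \in \partial J(0)$, and combining with (ii) gives $Q_0(f)/T \in \partial J(P_0(f))$.

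I do not anticipate a serious obstacle: the finite-dimensional norm equivalence on $\mathcal{N}(J)^\bot$ (constant $c_0$) together with the bounded-subdifferential structure of absolutely one-homogeneous functionals supplies all the ingredients. The only subtlety to note is that the argument actually shows $u_{VM}(t) = P_0(f)$ for every $t \geq T$, not just at a single extinction time, since $\|Q_0(f)/t\| < c_0$ persists for $t > T$.
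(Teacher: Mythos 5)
Your proposal is correct and follows essentially the same route as the paper's proof: reduce via the optimality condition to showing $Q_0(f)/T \in \partial J(P_0(f)) = \partial J(0)$, and then use the fact (from Lemma \ref{subgradientlemma2}) that $\partial J(0)$ contains a neighborhood of $0$ in ${\cal N}(J)^\bot$. You merely make the paper's phrase ``for $T$ sufficiently large'' quantitative with the explicit bound $T > \Vert Q_0(f)\Vert / c_0$.
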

\begin{proof}
Considering the optimality conditions for \eqref{eq:variationalMethod}, the above statement is the same as
$$\frac{Q_0 f}T  =  \frac{f - P_0(f)}{T} \in \partial J(P_0(f))= \partial J(0).$$
Since $\partial J(0)$ has nonempty relative interior in ${\cal N}(J)$ this is guaranteed for $T$ sufficiently large.
\end{proof}
Note that the above proof yields the extinction as the minimal value $T$ such that $\frac{Q_0 f}T \in \partial J(0)$, which can also be expressed as the minimal $T$ such that $J^*(\frac{Q_0 f}T)=0$, i.e. $\frac{Q_0 f}T$ is in the dual unit ball. This is the generalization of the well-known result by Meyer \cite{Meyer[1]} for total variation denoising.

A second useful property is the Lipschitz continuity of $u_{VM}$, which allows to narrow the class of distributions for $\phi$:
\begin{proposition} \label{prop:VMlipschitz}
The function $u_{VM}: \R^+ \rightarrow \R^n$ is Lipschitz continuous. Moreover, the spectral representation satisfies $\phi_{VM} \in (W_{loc}^{1,1}(\mathbb{R}^+,\mathbb{R}^n))^*$.
\end{proposition}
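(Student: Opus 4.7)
The plan is to exploit the optimality conditions of \eqref{eq:variationalMethod} together with the monotonicity of the subdifferential to establish a pointwise Lipschitz bound, and then to interpret $\phi_{VM} = t \partial_{tt} u_{VM}(t)$ as a distribution via integration by parts.

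First I would use the optimality condition for \eqref{eq:variationalMethod}, which yields the existence of $p(t) \in \partial J(u_{VM}(t))$ with $u_{VM}(t) = f - t\,p(t)$. For two times $t_1, t_2 \geq 0$ with corresponding subgradients $p_1, p_2$, I would write
\begin{equation*}
u_{VM}(t_1) - u_{VM}(t_2) = -t_1(p_1 - p_2) - (t_1 - t_2) p_2,
\end{equation*}
take the inner product with $u_{VM}(t_1)-u_{VM}(t_2)$, and use the monotonicity inequality $\langle p_1 - p_2, u_{VM}(t_1)-u_{VM}(t_2)\rangle \geq 0$ (which follows from convexity of $J$). This reduces the estimate to
\begin{equation*}
\|u_{VM}(t_1) - u_{VM}(t_2)\|^2 \leq |t_1-t_2|\, \|p_2\|\,\|u_{VM}(t_1) - u_{VM}(t_2)\|.
\end{equation*}
Since every subgradient lies in $\partial J(0)$ by Lemma \ref{subgradientlemma2}, Conclusion \ref{conclu:boundedSubdiff} provides a uniform bound $\|p_2\|\leq C$, so $\|u_{VM}(t_1)-u_{VM}(t_2)\| \leq C\,|t_1-t_2|$, yielding Lipschitz continuity.

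For the distributional statement, I would first invoke Proposition \ref{prop:finiteTimeExtinction} to fix an extinction time $T$ beyond which $u_{VM}$ is constant, so that the Lipschitz estimate implies $\partial_t u_{VM} \in L^\infty(\mathbb{R}^+;\mathbb{R}^n)$ with support in $[0,T]$. For a test function $\varphi \in W^{1,1}_{loc}(\mathbb{R}^+;\mathbb{R}^n)$ I would then \emph{define} the action of $\phi_{VM}$ via integration by parts,
\begin{equation*}
\langle \phi_{VM}, \varphi\rangle := -\int_0^T \partial_t u_{VM}(t) \cdot \bigl(\varphi(t) + t\,\partial_t \varphi(t)\bigr)\,dt,
\end{equation*}
which formally corresponds to $\int t\,\partial_{tt} u_{VM}(t)\cdot \varphi(t)\,dt$ after moving one derivative onto $t\varphi(t)$. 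The right-hand side is bounded by $\|\partial_t u_{VM}\|_\infty \bigl(\|\varphi\|_{L^1(0,T)} + T\,\|\partial_t\varphi\|_{L^1(0,T)}\bigr)$, which is finite for any $\varphi \in W^{1,1}_{loc}$, giving the claimed continuity of the linear functional.

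The main subtlety, and the step I would double-check most carefully, is the justification that the monotonicity inequality actually holds between $p_1$ and $p_2$ evaluated at different $u$'s (it is the standard monotonicity of the subdifferential of a convex functional, but it is easy to slip and apply it at a single point). The rest is routine: boundedness of subgradients from Conclusion \ref{conclu:boundedSubdiff} closes the Lipschitz estimate, and finite time extinction removes any worry about integrability at infinity when interpreting $\phi_{VM}$ as an element of $(W^{1,1}_{loc}(\mathbb{R}^+;\mathbb{R}^n))^*$.
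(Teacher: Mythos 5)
Your proposal is correct and follows essentially the same route as the paper: subtract the optimality conditions at two times, pair with $u_{VM}(t_1)-u_{VM}(t_2)$, discard the nonnegative monotonicity term $\langle p_1-p_2, u_{VM}(t_1)-u_{VM}(t_2)\rangle \geq 0$, and close the Lipschitz bound with the uniform subgradient bound from Conclusion~\ref{conclu:boundedSubdiff}; the distributional part via integration by parts, finite time extinction, and the $L^\infty$ bound on $\partial_t u_{VM}$ is likewise identical to the paper's argument.
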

\begin{proof}
Consider $u_{VM}(t)$ and $u_{VM}(t+\Delta t)$. Subtracting the optimality conditions yields
$$0 = u_{VM}(t) - u_{VM}(t+ \Delta t) + t (p_{VM}(t) - p_{VM}(t + \Delta t)) - \Delta t p_{VM}(t + \Delta t).$$
Taking the inner product with $u_{VM}(t) - u_{VM}(t+ \Delta t)$ yields
\begin{align*}
0 =& \|u_{VM}(t) - u_{VM}(t+ \Delta t)\|^2  - \Delta t  \langle p_{VM}(t + \Delta t), u_{VM}(t) - u_{VM}(t+ \Delta t)\rangle \\
&  + t \underbrace{\langle p_{VM}(t) - p_{VM}(t + \Delta t)),u_{VM}(t) - u_{VM}(t+ \Delta t)\rangle}_{\geq 0}\\
\geq& \|u_{VM}(t) - u_{VM}(t+ \Delta t)\|^2  - \Delta t  \langle p_{VM}(t + \Delta t), u_{VM}(t) - u_{VM}(t+ \Delta t)\rangle \\
\geq &\|u_{VM}(t) - u_{VM}(t+ \Delta t)\|^2  - \Delta t  \| p_{VM}(t + \Delta t)\| \|u_{VM}(t) - u_{VM}(t+ \Delta t)\|.
\end{align*}
Using conclusion \ref{conclu:boundedSubdiff} (and Lemma \ref{subgradientlemma2}), we find
$$\|u_{VM}(t) - u_{VM}(t+ \Delta t)\| \leq \Delta t \; C.$$

Through integration by parts we obtain for regular test functions $v$:
\begin{align}
\label{eq:formalFiltering0}
 \int_0^\infty v(t) \; \cdot \phi_{VM}(t) ~dt  =&  \int_0^\infty t v(t) \cdot \partial_{tt}u_{VM}(t) ~dt,  \nonumber\\
 =&    - \int_0^\infty \partial_t(v(t) \;t) \partial_{t}u_{VM}(t) ~dt,   \nonumber\\
  =&  - \int_0^\infty (t \partial_t v(t) + v(t)) \; \partial_{t}u_{VM}(t) ~dt.
\end{align}
If $u_{VM}(t)$ is Lipschitz-continuous, then $\partial_{t}u_{VM}(t)$ is an $L^\infty$ function. Due to the finite time extinction, the above integrals can be restricted to the interval $(0,T_{ext})$ and the last integral is  well-defined for any $v$ such that $v\in W^{1,1}_{loc}(\mathbb{R}_+,\mathbb{R}^n)$. A standard density argument yields that \eqref{eq:formalFiltering} finally allows to use all such test functions, i.e. defines $\partial_t u_{VM}$ in the dual space.
\end{proof}

Thanks to the finite time extinction we can state the reconstruction of any type of input data $f$ by integration over $\phi_{VM}(t)$ in general.
\begin{theorem}[Reconstruction of the input data]\label{thm:recon_vm}
It holds that
\begin{align}
\label{eq:recon_vm}
 f =  P_0(f) + \int_0^\infty \phi_{VM}(t) ~dt.
 \end{align}
\end{theorem}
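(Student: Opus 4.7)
The plan is to reduce the claim to the distributional integration-by-parts identity already established in Proposition \ref{prop:VMlipschitz} and then exploit the finite time extinction from Proposition \ref{prop:finiteTimeExtinction}. First I would note two facts about $u_{VM}$: (i) by the definition of $u_{VM}(t)$ as the solution of \eqref{eq:variationalMethod}, the case $t=0$ yields $u_{VM}(0)=f$; (ii) by Proposition \ref{prop:finiteTimeExtinction} there exists $T<\infty$ with $u_{VM}(t)=P_0(f)$ for all $t\geq T$, so $\partial_t u_{VM}(t)=0$ for a.e.\ $t>T$. Combined with the Lipschitz continuity (Proposition \ref{prop:VMlipschitz}), $\partial_t u_{VM}$ is an $L^\infty$ function supported in $[0,T]$.

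Next I would apply the integration-by-parts identity from the proof of Proposition \ref{prop:VMlipschitz},
\begin{equation*}
 \int_0^\infty v(t)\cdot\phi_{VM}(t)\,dt = -\int_0^\infty \bigl(t\,\partial_t v(t)+v(t)\bigr)\,\partial_t u_{VM}(t)\,dt,
\end{equation*}
to the constant test function $v\equiv 1$, which is admissible because the density argument in Proposition \ref{prop:VMlipschitz} identifies $\phi_{VM}$ as an element of $(W^{1,1}_{loc}(\mathbb{R}_+,\mathbb{R}^n))^*$ and $\partial_t u_{VM}$ is compactly supported. With $\partial_t v\equiv 0$ the right-hand side collapses to $-\int_0^T \partial_t u_{VM}(t)\,dt$, and the fundamental theorem of calculus (applicable since $u_{VM}$ is absolutely continuous) gives
\begin{equation*}
 \int_0^\infty \phi_{VM}(t)\,dt = -\bigl(u_{VM}(T)-u_{VM}(0)\bigr) = f-P_0(f),
\end{equation*}
which rearranges to the asserted reconstruction formula \eqref{eq:recon_vm}.

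The main subtlety to address carefully is the use of the non-compactly-supported test function $v\equiv 1$. If one prefers to stay strictly within compactly supported test functions, the cleanest workaround is to pick any smooth cutoff $v_R$ with $v_R\equiv 1$ on $[0,T+1]$ and $v_R\equiv 0$ outside $[0,R]$ for some $R>T+1$; then $\partial_t v_R$ is supported in $\{t>T+1\}$ where $\partial_t u_{VM}$ vanishes, so the term $t\,\partial_t v_R(t)\,\partial_t u_{VM}(t)$ contributes nothing, and $v_R(t)\partial_t u_{VM}(t)=\partial_t u_{VM}(t)$ a.e. The calculation above then goes through without change, and the remaining steps are routine.
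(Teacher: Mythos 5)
Your proposal is correct and follows essentially the same route as the paper: both apply the integration-by-parts identity \eqref{eq:formalFiltering0} to a constant test function (the paper tests against an arbitrary constant vector $g\in\mathbb{R}^n$, which is the precise way to phrase your $v\equiv 1$ for a vector-valued distribution), observe that the derivative term vanishes, and conclude via the finite extinction time and the boundary values $u_{VM}(0)=f$, $u_{VM}(T)=P_0(f)$. Your extra cutoff discussion is unnecessary since constants already lie in $W^{1,1}_{loc}(\mathbb{R}_+,\mathbb{R}^n)$, as the paper notes, but it does no harm.
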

\begin{proof}
Since for each vector $g \in \mathbb{R}^n$ the constant function $v=g$ is an element of $ W^{1,1}_{loc}(\mathbb{R}_+,\mathbb{R}^n)$, we can use \eqref{eq:formalFiltering0}

$$
\int_0^\infty g\cdot \phi_{VM}(t) ~dt 
= -  \int_0^\infty g \cdot \partial_{t}u_{VM}(t) ~dt = - g\cdot \int_0^\infty  \partial_{t}u_{VM}(t) ~dt .
$$
Hence, with the well-defined limits of $u_{VM}$ at $t=0$ and $t\rightarrow \infty$ we have
$$ g\cdot \int_0^\infty  \phi_{VM}(t) ~dt = -  g \cdot \int_0^\infty \partial_{t}u_{VM}(t) ~dt
= g \cdot (f - P_0(f)), $$
which yields the assertion since $g$ is arbitrary.
\end{proof}

In analogy to the classical linear setting, we would like to define a filtering of the wavelength representation via
\begin{align}
\label{eq:filtering}
 \hat{u}_{filtered} = w_0 \;P_0(f) + \int_0^\infty w(t) \; \phi_{VM}(t) ~dt
 \end{align}
for $w_0 \in \mathbb{R}$ and a suitable filter function $w(t)$.
While the above formulation is the most intuitive expression for the filtering procedure, we have to take care of the regularity of $\phi_{VM}$ and hence understand the integral on the right-hand side in the sense of \eqref{eq:formalFiltering0}, i.e.
\begin{equation}
\label{eq:formalFiltering}
	\hat{u}_{filtered} = w_0 \;P_0(f) - \int_0^\infty (t w'(t) + w(t)) \; \partial_{t}u_{VM}(t) ~dt.
\end{equation}

\subsection{Scale Space Representation}
\label{sec:SpectralScaleSpace}

A spectral representation based on the gradient flow formulation \eqref{eq:scaleSpace} was the first work towards defining a nonlinear spectral decomposition and has been conducted by Guy Gilboa in \cite{Gilboa_SSVM_2013_SpecTV,Gilboa_spectv_SIAM_2014} for the case of $J$ being the TV regularization. In our conference paper \cite{spec_one_homog}, we extended this notion to general absolutely one-homogeneous functionals by observing that the solution of the gradient flow coincides with the one of the variational method in the case of $f$ being an eigenfunction, i.e., for $\|f\|=1$, $\lambda f \in \partial J(f)$, the solution to \eqref{eq:scaleSpace} is given by
\begin{align*}
u_{GF}(t) = \left\{
\begin{array}{cc}
(1 -  t \lambda ) f & \text{ for } t\leq \frac{1}{\lambda}, \\
0 & \text{ else. }
\end{array}
\right.
\end{align*}
The latter motivates exactly the same definitions as for the variational method. In particular, we define the wavelength decomposition of the input data $f$ by
$$\phi_{GF}(t) = t \partial_{tt} u_{GF}(t). $$

As in the previous section, we will show that $\phi_{GF}$ also is a weak spectral wavelength decomposition in the sense of definition \ref{def:weakSpectralDecomposition} for any convex absolutely one-homogeneous regularization functional $J$.

\begin{proposition}[Finite time extinction]
\label{prop:finiteTimeExtinctionGF}
Let $J$ be an absolutely one-homogeneous functional, and $f$ be arbitrary. Then there exists a time $T<\infty$ such that $u_{VM}$ determined via \eqref{eq:variationalMethod} meets
$$u_{GF}(T) = P_0(f).$$
\end{proposition}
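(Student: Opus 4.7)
The plan is to adapt the variational-method proof to the gradient flow setting by tracking how $u_{GF}(t)$ evolves relative to $P_0(f)$, using the energy identity coming from the flow together with the norm equivalence \eqref{normequivalence}.

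First I would establish that the component in the nullspace is conserved along the flow. Since $p_{GF}(t) \in \partial J(u_{GF}(t)) \subset \mathcal{N}(J)^\perp$ by Lemma \ref{subgradientlemma3}, applying $P_0$ to the flow equation gives $\partial_t P_0(u_{GF}(t)) = -P_0(p_{GF}(t)) = 0$. Hence $P_0(u_{GF}(t)) = P_0(f)$ for every $t \geq 0$, and consequently $u_{GF}(t) - P_0(f) = Q_0 u_{GF}(t)$.

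Next I would derive an energy inequality for $y(t) := \tfrac{1}{2}\|u_{GF}(t) - P_0(f)\|^2$. Using the flow and the fact that $\langle P_0(f), p_{GF}(t)\rangle = 0$,
\begin{align*}
\frac{d}{dt} y(t) &= \langle u_{GF}(t) - P_0(f), \partial_t u_{GF}(t) \rangle = -\langle u_{GF}(t), p_{GF}(t)\rangle = -J(u_{GF}(t)),
\end{align*}
where in the last step I use Lemma \ref{subgradientlemma1}. Because $J(u_{GF}(t)) = J(Q_0 u_{GF}(t))$ by Lemma \ref{lem:nullLemma}, the norm equivalence \eqref{normequivalence} on $\mathcal{N}(J)^\perp$ yields $J(u_{GF}(t)) \geq c_0 \|Q_0 u_{GF}(t)\| = c_0 \sqrt{2y(t)}$. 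This gives the differential inequality $y'(t) \leq -c_0 \sqrt{2 y(t)}$, which after setting $z(t) = \sqrt{2 y(t)} = \|Q_0 u_{GF}(t)\|$ reads $z'(t) \leq -c_0$ as long as $z(t) > 0$.

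Integrating this inequality bounds $z(t) \leq \|Q_0 f\| - c_0 t$, so $z$ must reach zero no later than $T := \|Q_0 f\|/c_0 < \infty$. At that time $u_{GF}(T) = P_0(u_{GF}(T)) = P_0(f)$, proving the claim. The only mild technical point I anticipate is justifying the chain rule for $y(t)$ at times where the subgradient selection $p_{GF}$ may jump; this is standard for the gradient flow of a convex functional (the flow is absolutely continuous and the energy equality $\tfrac{d}{dt}\tfrac12\|u_{GF}-P_0(f)\|^2 = -J(u_{GF})$ holds a.e.\ and in the sense of absolutely continuous functions), and the differential inequality can equivalently be integrated from this absolutely continuous version.
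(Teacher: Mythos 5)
Your proof is correct and follows essentially the same route as the paper's: conservation of the nullspace component via Lemma \ref{subgradientlemma3}, the dissipation identity $\frac{d}{dt}\tfrac12\Vert Q_0 u_{GF}\Vert^2 = -J(u_{GF})$, the lower bound $J(Q_0 u_{GF}) \geq c_0 \Vert Q_0 u_{GF}\Vert$ from \eqref{normequivalence}, and integration of the resulting linear decay. Your extinction bound $T = \Vert Q_0 f\Vert/c_0$ is in fact slightly sharper than the paper's $\Vert f \Vert / c_0$, and your remark on the a.e.\ validity of the chain rule is a reasonable technical footnote the paper leaves implicit.
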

\begin{proof}
Since any subgradient is orthogonal to ${\cal N}(J)$ the same holds for $\partial_t u$, hence
$P_0(u(t))=P_0(u(0))=P_0(f)$ for all $t \geq 0$. On the other hand we see that
\begin{eqnarray*}
\frac{1}2 \frac{d}{dt} \Vert Q_0 u_{GF}\Vert^2 &=& \langle Q_0 u_{GF}, Q_0 \partial_t u_{GF} \rangle = - \langle Q_0 u_{GF}, p_{GF} \rangle = -  \langle   u_{GF}, p_{GF} \rangle \\ &=& - J(u_{GF}) = -
J(Q_0 u_{GF}) \leq - c_0 \Vert Q_0 u_{GF} \Vert.
\end{eqnarray*}
For $t$ such that $\Vert Q_0 u_{GF} \Vert \neq 0$ we conclude
$$  \frac{d}{dt} \Vert Q_0 u_{GF}\Vert \leq -  c_0, $$
thus
$$ \Vert Q_0 u_{GF}(t) \Vert \leq \Vert f \Vert -  c_0 t. $$
Due to the positivity of the norm we conclude that $\Vert Q_0 u_{GF}(T) \Vert = 0$ for $T \geq
\frac{\Vert f \Vert}{c_0}$.
\end{proof}

The regularity $\partial_t u_{GF}(t) \in L^\infty$ is guaranteed by the general theory on gradient flows (cf. \cite[p. 566, Theorem 3]{evans}), in our case it can also be inferred quantitatively from the a-priori bound on the subgradients in Conclusion \ref{conclu:boundedSubdiff}. With the same proof as
in Proposition \ref{prop:VMlipschitz} we can analyze $\phi_{GF}$ as a bounded linear functional:
\begin{proposition}\
The function $u_{GF}: \R^+ \rightarrow \R^n$ is Lipschitz continuous. Moreover, the spectral representation satisfies $\phi_{GF} \in (W_{loc}^{1,1}(\mathbb{R}^+,\mathbb{R}^n))^*$.
\end{proposition}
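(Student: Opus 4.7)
The plan is to mirror the structure of Proposition \ref{prop:VMlipschitz}, substituting the gradient-flow identity for the variational optimality condition. The statement has two parts: (i) Lipschitz regularity of $u_{GF}$, and (ii) interpretation of $\phi_{GF}$ as an element of $(W^{1,1}_{loc}(\R^+,\R^n))^*$. The note preceding the proposition already signals that $\partial_t u_{GF} \in L^\infty$ is available from general gradient flow theory (Evans), and quantitatively from Conclusion \ref{conclu:boundedSubdiff}. So the task is to package this.

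For (i), I would start from $\partial_t u_{GF}(t) = -p_{GF}(t)$ with $p_{GF}(t) \in \partial J(u_{GF}(t)) \subset \partial J(0)$, which by Conclusion \ref{conclu:boundedSubdiff} is uniformly bounded by some constant $C$. Then for $t < t + \Delta t$ I would write
\begin{equation*}
u_{GF}(t+\Delta t) - u_{GF}(t) = -\int_t^{t+\Delta t} p_{GF}(\tau)\,d\tau,
\end{equation*}
and conclude $\|u_{GF}(t+\Delta t) - u_{GF}(t)\| \leq C \Delta t$. Alternatively, to truly mirror the proof of Proposition \ref{prop:VMlipschitz}, I could subtract $\partial_t u_{GF}(t) + p_{GF}(t) = 0$ from its counterpart at $t + \Delta t$, take the inner product with $u_{GF}(t+\Delta t) - u_{GF}(t)$, exploit the monotonicity $\langle p_{GF}(t+\Delta t) - p_{GF}(t), u_{GF}(t+\Delta t) - u_{GF}(t)\rangle \geq 0$ that follows from convexity of $J$, and derive the same bound via Cauchy--Schwarz and the uniform bound on $\|p_{GF}\|$. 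Either route gives Lipschitz continuity with constant $C$, and in particular $\partial_t u_{GF} \in L^\infty(\R^+;\R^n)$.

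For (ii), with $\partial_t u_{GF} \in L^\infty$ in hand, I would carry out exactly the integration by parts of equation \eqref{eq:formalFiltering0}: for a test function $v \in W^{1,1}_{loc}(\R^+,\R^n)$,
\begin{equation*}
\int_0^\infty v(t)\cdot \phi_{GF}(t)\,dt = \int_0^\infty t\, v(t)\cdot \partial_{tt} u_{GF}(t)\,dt = -\int_0^\infty (t v'(t) + v(t))\cdot \partial_t u_{GF}(t)\,dt.
\end{equation*}
By Proposition \ref{prop:finiteTimeExtinctionGF}, $u_{GF}(t) = P_0(f)$ for $t \geq T_{\mathrm{ext}}$, so $\partial_t u_{GF}$ is supported in $[0,T_{\mathrm{ext}}]$. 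The right-hand side is then bounded by $\|\partial_t u_{GF}\|_{L^\infty} \cdot \|tv' + v\|_{L^1(0,T_{\mathrm{ext}})}$, which is finite for every $v \in W^{1,1}_{loc}(\R^+,\R^n)$. A density argument extends the identity from smooth to general $W^{1,1}_{loc}$ test functions, giving $\phi_{GF} \in (W^{1,1}_{loc}(\R^+,\R^n))^*$.

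The main subtlety is the first step: establishing Lipschitz continuity without reusing variational machinery. The cleanest resolution is to observe that the subgradient selection $p_{GF}$ always lies in $\partial J(0)$ (Lemma \ref{subgradientlemma2}), which by Conclusion \ref{conclu:boundedSubdiff} is a bounded set in $\R^n$; everything else is a direct translation of the argument already written for the variational representation, so no substantively new work beyond that a priori bound is required.
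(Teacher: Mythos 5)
Your proposal is correct and takes essentially the same route as the paper: the paper gives no separate proof for this proposition, stating only that $\partial_t u_{GF}\in L^\infty$ follows from the a priori subgradient bound of Conclusion~\ref{conclu:boundedSubdiff} (or general gradient-flow theory) and that the rest is ``the same proof as in Proposition~\ref{prop:VMlipschitz}.'' Your primary argument --- writing $u_{GF}(t+\Delta t)-u_{GF}(t)=-\int_t^{t+\Delta t}p_{GF}(\tau)\,d\tau$ with $p_{GF}(\tau)\in\partial J(0)$ uniformly bounded, then integrating by parts on $(0,T_{\mathrm{ext}})$ and closing with a density argument --- is precisely that intended proof.
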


Thus, we have the same regularity of the distribution as in the case of the variational method and can define filters in the same way.

Naturally, we obtain exactly the same reconstruction result as for the variational method.
\begin{theorem}[Reconstruction of the input data]\label{thm:recon_gf}
It holds that
\begin{align}
\label{eq:recon_gf}
 f =  P_0(f) + \int_0^\infty \phi_{GF}(t) ~dt.
 \end{align}
\end{theorem}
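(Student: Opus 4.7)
The plan is to follow verbatim the structure of the proof of Theorem \ref{thm:recon_vm}, relying on the three ingredients that have already been established for the gradient flow: the finite-time extinction $u_{GF}(T) = P_0(f)$ from Proposition \ref{prop:finiteTimeExtinctionGF}, the Lipschitz continuity of $u_{GF}$ (so that $\partial_t u_{GF} \in L^\infty$), and the distributional interpretation of $\phi_{GF} = t \partial_{tt} u_{GF}$ as an element of $(W^{1,1}_{loc}(\mathbb{R}^+,\mathbb{R}^n))^*$ established in the proposition just before the theorem.

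First I would carry out the same integration-by-parts computation as in equation \eqref{eq:formalFiltering0}: for any test function $v \in W^{1,1}_{loc}(\mathbb{R}^+,\mathbb{R}^n)$,
\begin{equation*}
\int_0^\infty v(t) \cdot \phi_{GF}(t)\, dt \;=\; \int_0^\infty t\, v(t) \cdot \partial_{tt} u_{GF}(t)\, dt \;=\; -\int_0^\infty (t \partial_t v(t) + v(t)) \cdot \partial_t u_{GF}(t)\, dt,
\end{equation*}
where the finite-time extinction ensures that the boundary terms at infinity vanish and restricts the integrals to the compact interval $[0,T]$, justifying the manipulations against the $L^\infty$ function $\partial_t u_{GF}$.

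Next I would plug in the constant test function $v(t) \equiv g$ for arbitrary $g \in \mathbb{R}^n$. The term $t \partial_t v(t)$ vanishes, leaving
\begin{equation*}
\int_0^\infty g \cdot \phi_{GF}(t)\, dt \;=\; -\,g \cdot \int_0^\infty \partial_t u_{GF}(t)\, dt.
\end{equation*}
The remaining integral can be evaluated by the fundamental theorem of calculus on the finite interval $[0,T]$ where $T$ is the extinction time, using $u_{GF}(0) = f$ and $u_{GF}(T) = P_0(f)$ from Proposition \ref{prop:finiteTimeExtinctionGF}. This gives $g \cdot \int_0^\infty \phi_{GF}(t)\, dt = g \cdot (f - P_0(f))$. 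Since $g \in \mathbb{R}^n$ is arbitrary, the asserted identity \eqref{eq:recon_gf} follows.

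There is really no hard step here: the entire proof is parallel to that of Theorem \ref{thm:recon_vm}, and the only thing one must verify is that the two facts used in the variational case (Lipschitz continuity of the trajectory and finite-time extinction) hold in the gradient-flow case as well, which is precisely what Proposition \ref{prop:finiteTimeExtinctionGF} and the preceding proposition provide. If there is a subtlety to flag, it is simply making sure the integration by parts is understood as a duality pairing rather than a classical integral, but this was already handled by the density argument at the end of the proof of Proposition \ref{prop:VMlipschitz}, and the same argument applies verbatim here.
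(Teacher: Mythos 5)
Your proposal is correct and is precisely the argument the paper intends: the paper omits the proof of Theorem \ref{thm:recon_gf} entirely, remarking only that it is "exactly the same" as for the variational method, and your write-up faithfully transplants the proof of Theorem \ref{thm:recon_vm} using the gradient-flow analogues (finite-time extinction from Proposition \ref{prop:finiteTimeExtinctionGF} and the Lipschitz/regularity statement preceding the theorem). No issues.
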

Furthermore, due to the finite time extinction and the same smoothness of $u_{GF}$ as for $u_{VM}$ we can define the formal filtering by
\begin{align}
\label{eq:formalFiltering_GF}
\hat{u}_\text{filtered} = w_0 P_0(f) - \int_0^\infty (t w'(t) + w(t)) \; \partial_{t}u_{GF}(t) ~dt,
\end{align}
for all filter functions $w \in W_\text{loc}^{1,1}$.

\subsection{Inverse Scale Space Representation}
\label{sec:SpectralInverseScaleSpace}
A third way of defining a spectral representation proposed in \cite{spec_one_homog} is via the inverse scale space flow equation \eqref{eq:inverseScaleSpace}. Again, the motivation for the proposed spectral representation is based on the method's behavior on eigenfunctions. For  $\|f\|=1$, $\lambda f \in \partial J(f)$, the solution to \eqref{eq:inverseScaleSpace} is given by
\begin{align*}
v_{IS}(s) = \left\{
\begin{array}{cc}
0 & \text{ for } s\leq \lambda, \\
f & \text{ else. }
\end{array}
\right.
\end{align*}
As we can see, the behavior of \eqref{eq:inverseScaleSpace} is fundamentally different to the one of \eqref{eq:variationalMethod} and \eqref{eq:scaleSpace} in two aspects: Firstly, stationarity is reached in an inverse fashion, i.e. by starting with zero and converging to $f$. Secondly, the primal variable $v_{IS}(s)$ has a piecewise constant behavior in time opposed to the piecewise linear behavior in the previous two cases. Naturally, only one derivative of $v$ is necessary to obtain peaks. We define
$$ \tilde \phi_{IS}(s) = \partial_s v_{IS}(s) = - \partial_{ss} q_{IS}(s) $$
to be the \textit{frequency representation} of $f$ in the inverse scale space setting.

We recall from section \ref{sec:WhatIsSpectral} that we can relate frequency and wavelength representations by a change of variable $s=1/t$ yielding 
\begin{equation}
	\label{eq:representationConversion}
u_{IS}(t) = v_{IS}(\frac{1}t), \quad p_{IS}(t) = q_{IS}(\frac{1}t), \quad \phi_{IS}(t)= - \partial_s v_{IS}(\frac{1}t) = t^2 \partial_t u_{IS}(t).
\end{equation}
Note that with these conversions we have
$$ \int_0^\infty \phi(t) \cdot v(t) ~dt = \int_0^\infty \tilde \phi(s) \cdot v(1/s) ~ds,$$
hence we may equally well consider integrations in the original variable $s$.

In the following we will show that the above $\phi_{IS}$ is a weak spectral wavelength decomposition in the sense of definition \ref{def:weakSpectralDecomposition} for any convex absolutely one-homogeneous regularization functional $J$.

Analogous to the other methods, the inverse scale space methods has finite time extinction, and as we see from the proof even at the same time as the variational method:
\begin{proposition}[Finite time extinction]
\label{prop:finiteTimeExtinctionIS}
Let $J$ be an absolutely one-homogeneous functional, and $f$ be arbitrary. Then there exists a time $T<\infty$ such that $u_{IS}$ determined via \eqref{eq:variationalMethod} meets
$u_{IS}(T) = P_0(f).$
\end{proposition}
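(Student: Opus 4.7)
My plan is to work in the original scale-space variable $s$ (using the change of variables $u_{IS}(t) = v_{IS}(1/t)$ from \eqref{eq:representationConversion}) and show that the ISS flow sits at the value $P_0(f)$ for an initial interval $s \in [0,s^*]$ with $s^* > 0$. Then $u_{IS}(1/s^*) = v_{IS}(s^*) = P_0(f)$, which gives the desired finite $T = 1/s^*$. This mirrors the strategy used in Proposition \ref{prop:finiteTimeExtinction}, and the resulting $s^*$ should coincide with $1/T_{VM}$, explaining the remark in the text that the extinction time is the same as for the variational method.

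First, I would propose the explicit ansatz
\[
v_{IS}(s) = P_0(f), \qquad q_{IS}(s) = s\,Q_0(f), \qquad s \in [0,s^*],
\]
and directly verify it satisfies the differential inclusion \eqref{eq:inverseScaleSpace}: the initial condition $q_{IS}(0)=0$ is immediate, and $\partial_s q_{IS}(s) = Q_0(f) = f - P_0(f) = f - v_{IS}(s)$. The only nontrivial requirement is the subgradient condition $q_{IS}(s) \in \partial J(v_{IS}(s)) = \partial J(P_0(f))$. Since $P_0(f) \in \mathcal{N}(J)$, Lemma \ref{lem:nullLemma} and Lemma \ref{subgradientlemma2} yield $\partial J(P_0(f)) = \partial J(0)$, so we need $s\, Q_0(f) \in \partial J(0)$.

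Next, I would use Lemma \ref{subgradientlemma2}, which states $\partial J(0)$ is a bounded convex set with nonempty relative interior in $\mathcal{N}(J)^\perp$. Since $Q_0(f) \in \mathcal{N}(J)^\perp$, and $0$ lies in the relative interior of $\partial J(0)$, the ray $\{s\, Q_0(f) : s \geq 0\}$ remains in $\partial J(0)$ up to some maximal $s^* > 0$ (if $Q_0(f) = 0$ there is nothing to prove, as then $f = P_0(f)$ already). By the boundedness of $\partial J(0)$, this $s^*$ is finite and strictly positive. The corresponding $T = 1/s^*$ is therefore finite and positive.

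The main subtlety is justifying that the explicit construction above really coincides with \emph{the} ISS solution on $[0,s^*]$. The standard monotonicity/uniqueness theory for the inverse scale space flow (with $q_{IS}$ absolutely continuous and the inclusion $q_{IS}(s) \in \partial J(v_{IS}(s))$) yields uniqueness of $v_{IS}$ a.e., so the constructed pair is the solution on this interval. Finally, comparing $s^* = \sup\{s \geq 0 : sQ_0(f) \in \partial J(0)\}$ with the characterization of the extinction time in Proposition \ref{prop:finiteTimeExtinction} (the minimal $T$ such that $Q_0(f)/T \in \partial J(0)$) makes the identity $T = T_{VM}$ transparent, which is exactly the observation announced after the proposition.
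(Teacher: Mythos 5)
Your proposal is correct and follows essentially the same route as the paper's (one-line) proof: the paper simply observes that $v_{IS}(s)=P_0(f)$ for $s\leq s_0$, with $s_0$ the maximal value such that $s_0(f-P_0(f))\in\partial J(0)$, and converts back to $u_{IS}$. Your version just makes explicit the ansatz $q_{IS}(s)=s\,Q_0(f)$, the identification $\partial J(P_0(f))=\partial J(0)$, and the positivity of $s_0$ via Lemma \ref{subgradientlemma2}, which the paper leaves implicit.
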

\begin{proof}
It is straight-forward to see that $v_{IS}(s) =  P_0(f)$ for $s \leq s_0$, where $s_0$ is the maximal value such that $s_0 (f- P_0(f)) \in \partial J(0)$. The conversion to $u_{IS}$ yields the finite time extinction.
\end{proof}

Furthermore note that the inverse scale space flow \eqref{eq:inverseScaleSpace} can be written as a gradient flow on the dual variable $q_{IS}(s)$ with respect to the convex functional $J^*(q)-\langle f , q\rangle $. This guarantees that $\partial_s q_{IS}(s) \in L^\infty$, thus $v_{IS}(s) \in L^\infty$, and hence also $u_{IS}(t) \in L^\infty$.

\begin{proposition}\
The function $u_{IS}: \R^+ \rightarrow \R^n$ is bounded almost everywhere and  $p_{IS}: [t_0,\infty) \rightarrow \R^n$ is Lipschitz continuous for every $t_0 > 0$. Moreover, the weak spectral representation satisfies $\phi_{IS} \in (W_{loc}^{1,1}(\mathbb{R}^+,\mathbb{R}^n))^*$.
\end{proposition}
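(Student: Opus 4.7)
The plan is to treat the three assertions in turn. The first two are essentially immediate from the gradient-flow interpretation of \eqref{eq:inverseScaleSpace} on the dual variable $q_{IS}$ highlighted just before the proposition: that interpretation, combined with the general theory of subdifferential gradient flows and the a-priori bound on subgradients in Conclusion \ref{conclu:boundedSubdiff}, provides an $L^\infty$ estimate on $\partial_s q_{IS}$. Since $\partial_s q_{IS}(s) = f - v_{IS}(s)$ by the flow equation, this gives $v_{IS}\in L^\infty$, hence $u_{IS}(t)=v_{IS}(1/t)$ is bounded almost everywhere. For the Lipschitz continuity of $p_{IS}(t) = q_{IS}(1/t)$ on $[t_0,\infty)$, the bound $\partial_s q_{IS}\in L^\infty$ makes $q_{IS}$ globally Lipschitz in $s$; composing with the Lipschitz map $t\mapsto 1/t$ on $[t_0,\infty)$ (Lipschitz constant $1/t_0^2$) yields Lipschitz continuity of $p_{IS}$ on $[t_0,\infty)$ with explicit constant bounded by $\|f-v_{IS}\|_\infty/t_0^2$.

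The third claim $\phi_{IS}\in (W_{loc}^{1,1}(\mathbb{R}^+,\mathbb{R}^n))^*$ is the substantive part, and I follow the integration-by-parts template of Proposition \ref{prop:VMlipschitz}. By Proposition \ref{prop:finiteTimeExtinctionIS} we have $u_{IS}(t)=P_0(f)$ for $t\ge T$, so the pairing against any test function $v$ reduces to the bounded interval $(0,T)$. A single distributional integration by parts of $\phi_{IS}(t)=t^2\partial_t u_{IS}(t)$ yields
\begin{equation*}
  \int_0^T v(t)\cdot\phi_{IS}(t)\, dt \;=\; -\int_0^T \bigl(t^2 v'(t)+2t\,v(t)\bigr)\cdot u_{IS}(t)\, dt \;+\; \bigl[t^2 v(t)\, u_{IS}(t)\bigr]_0^T.
\end{equation*}
The boundary contribution at $t=0$ vanishes because $t^2\to 0$ while $u_{IS}$ stays bounded, and the value at $T$ is finite. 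Combining $u_{IS}\in L^\infty(0,T)$ with $t^2 v' + 2t v \in L^1(0,T)$ whenever $v\in W^{1,1}(0,T)$ shows that the right-hand side is a continuous functional on $W^{1,1}$, and a density argument extends the pairing to all of $W_{loc}^{1,1}(\mathbb{R}^+,\mathbb{R}^n)$.

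The hard part is making the integration by parts rigorous: $\partial_t u_{IS}$ is in general only a vector-valued measure (in the pure eigenfunction case $v_{IS}$ is piecewise constant, so $\partial_s v_{IS}$ is a Dirac mass), and the identity above must be read as a duality pairing with a distributional derivative. The cleanest route is to switch to the $s$-variable via \eqref{eq:representationConversion} and rewrite the pairing as $-\int \partial_s v_{IS}(s)\cdot v(1/s)\, ds$ on the bounded $s$-interval delimited by the two extinction times of the inverse scale space flow; integration by parts then transfers the derivative onto $v(1/s)$, whose Jacobian $-1/s^2$ is exactly cancelled by the $t^2$ weight that was built into the definition of $\phi_{IS}$, leaving an $L^1$-against-$L^\infty$ pairing that realizes the claimed functional on $W_{loc}^{1,1}$.
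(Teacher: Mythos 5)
Your proof is correct and follows essentially the same route as the paper: the paper bounds $\Vert v_{IS}(s)-f\Vert$ via the energy dissipation of the dual gradient flow (equivalently, the $L^\infty$ bound on $\partial_s q_{IS}=f-v_{IS}$ you derive), concludes $u_{IS}\in L^\infty$ and global Lipschitz continuity of $q_{IS}$, obtains Lipschitz continuity of $p_{IS}$ by composing with $t\mapsto 1/t$ on $[t_0,\infty)$, and justifies $\phi_{IS}\in (W_{loc}^{1,1})^*$ by the same integration by parts in the $s$-variable that appears in its filtering formula. One small correction: Conclusion \ref{conclu:boundedSubdiff} bounds the subgradients $q_{IS}(s)\in\partial J(0)$ themselves rather than the velocity $\partial_s q_{IS}$, so it is not the right citation for the $L^\infty$ estimate on $\partial_s q_{IS}$; that estimate comes from the monotonicity of $s\mapsto\Vert f-v_{IS}(s)\Vert$ (the general theory of subdifferential gradient flows applied to $J^*(q)-\langle f,q\rangle$), which is exactly what the paper invokes.
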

\begin{proof}
The standard energy dissipation in the inverse scale space flow yields that $s \mapsto \Vert v_{IS}(s) - f\Vert$ is a non-increasing function, hence it is bounded by its value $\Vert f \Vert$ at $s=0$. Hence, by the triangle inequality
$$ \Vert v_{IS}(s) \Vert \leq 2 \Vert f \Vert, \quad  \Vert u_{IS}(t) \Vert \leq 2 \Vert f \Vert$$
for all $s,t>0$. The first inequality also implies the Lipschitz continuity of $q_{IS}$ on $\mathbb{R^+}$ and hence by concatenation with $t \mapsto \frac{1}t$ Lipschitz continuity of $p_{IS}$ on $[t_0,\infty)$.
\end{proof}


We can now consider filterings of the inverse scale space flow representation, again by formal integration by parts
\begin{align*}
 \hat{u}_{filtered} &= \int_0^\infty w(t) \; \phi_{IS}(t) ~dt
= - \int_0^\infty w(\frac{1}s) \; \partial_s v_{IS}(s) ~ds \\
&= - \int_{\frac{1}T}^\infty w(\frac{1}s) \; \partial_s v_{IS}(s) ~ds    =  w_0 f - w_T P_0(f) - \int_0^T  \frac{1}{s^2} w '(\frac{1}s) \; v_{IS}(s) ~ds \\
 &=  w_0 f - w_T P_0(f) - \int_0^T  w'(t) \; u_{IS}(t) ~dt ,
\end{align*}
where $T$ is the finite extinction time stated by Proposition \ref{prop:finiteTimeExtinctionIS}. The last line can be used to define filterings in the inverse scale space flow setting. Note that $w(t) = 1$ for all $t$ leads to a reconstruction of the $Q_0f$, i.e.
\begin{align}
\label{eq:recon_iss}
 f =  P_0(f) + \int_0^\infty \phi_{IS}(t) ~dt.
\end{align}

\subsection{Definitions of the Power Spectrum}
\label{sec:spectrum}

As in the linear case, it is very useful to measure in some sense the ``activity" at each frequency (scale). 
This can help identify dominant scales and design better the filtering strategies (either manually or automatically).
Moreover, one can obtain a notion of the type of energy which is preserved in the new representation using an analog of
Parseval's identity. While the amount of information on various spatial scales in linear and nonlinear scale spaces has been analyzed using Renyi’s generalized entropies in \cite{Sporring-Weickert-tip99}, we will focus on defining a spectral power spectrum. As we have seen above, at least for an orthogonal spectral definition there is a natural definition of the power spectrum as the measure
\begin{equation}\label{eq:S3}
	S^2(t)  = \Phi_t \cdot f \equiv S^2_3(t),
\end{equation}
which yields a Parseval identity. 
Note that for $S^2$ being absolutely continuous with respect to the Lebesgue measure with density $\rho$ we can define a continuous power spectrum $s(t)  = \sqrt{\rho(t)}$ and have
\begin{equation}
\Vert f \Vert^2 = \int_0^\infty s(t)^2 ~dt.
\end{equation}
On the other hand, if $S^2$ is a sum of concentrated measures, $S^2(t) = \sum_j a_j \delta(t-t_j)$ we can define $s_j = \sqrt{a_j}$ and have the classical Parseval identity
\begin{equation}
\Vert f \Vert^2 = \sum_j s_j^2.
\end{equation}

We will now briefly recall two earlier definitions of the spectrum and propose a third new one. For the sake of simplicity, we omit the subscripts $VM$, $GF$, and $IS$ in the following discussion when all three variants can be used.

In \cite{Gilboa_SSVM_2013_SpecTV,Gilboa_spectv_SIAM_2014} a $L^1$ type spectrum was suggested for the TV spectral framework
(without trying to relate to a Parseval identity),
\begin{equation}
\label{eq:S1}
S_1(t) := \|\phi(t)\|_{1}.
\end{equation}
{Considering the mathematical definition of $\phi$ as having components in $(W^{1,1}_{loc})^*$, we can see that mollification with a $W^{1,1}_{loc}$ function is needed in order to obtain a well-defined version of equation \eqref{eq:S1}. A simple choice would be
\begin{equation}
S^\sigma_1(t) := \left\|\int_0^T g_\sigma(t) \; \phi(t;x)~dt \right\|_{L^1(\Omega)}
\end{equation}
for a Gaussian function $g_\sigma(t)$ with very small $\sigma$. }

In \cite{spec_one_homog} the following definition was suggested for the gradient flow,
\begin{equation} \label{eq:S2}
	S_2^2 (t) = t^2  { \frac{d^2}{dt^2}J(u_{GF}(t)) } = 2 t {\langle \phi_{GF}(t),  p_{GF}(t)} \rangle.
= - t^2 \frac{d}{dt} \Vert p_{GF}(t)\Vert^2\end{equation}
From the dissipation properties of the flow one can deduce that $S_2^2$ is always nonnegative.
With this definition the following analogue of the Parseval identity can be shown :
\begin{eqnarray}
	\Vert f \Vert^2 & = & - \int_0^\infty \frac{d}{dt} \Vert u_{GF}(t) \Vert^2 ~dt = 2 \int_0^\infty \langle p_{GF}(t), u_{GF}(t) \rangle ~dt =
	 2 \int_0^\infty J(u_{GF}(t))~dt \nonumber \\ & = & - 2 \int_0^\infty t \frac{d}{dt} J(u_{GF}(t))~dt =  \int_0^\infty t \frac{d}{dt} J(u_{GF}(t))~dt = \int_0^\infty S_2(t)^2  ~dt.	\end{eqnarray}
	
Below, we will show that under certain conditions indeed $S_3^2$ is an equivalent realization of $S_2^2$.
In Fig. \ref{fig:spectrum} we present numerical examples of the different behavior of those variants.
	
%
%
%

\section{Analysis of the Spectral Decompositions}
\label{sec:analysis}

\subsection{Basic Conditions on the Regularization}

To analyze the behavior and relation of the different spectral decompositions, let us make the following definition.
\begin{definition}[MINSUB]
\label{def:MINSUB}
We say that $J$ meets (MINSUB) if for all $u \in R^n$, the element $\hat{p}$ determined by
\begin{align}
\label{eq:minimizingSubgradient}
\hat{p} = \arg \min_p \|p\|^2 \ \text{subject to } p \in \partial J(u) ,
\end{align}
meets
\begin{align*}
\langle \hat{p}, \hat{p} - q \rangle = 0 \ \ \forall q \in \partial J(u).
\end{align*}
\end{definition}

To give some intuition about what the condition (MINSUB) means, let us give two examples of regularizations that meet (MINSUB).
\begin{example}[$\ell^1$ regularization meets (MINSUB)]\
Consider $J(u) = \|u\|_1$. The characterization of the $\ell^1$ subdifferential yields
\begin{align}
\label{eq:subdiffCharak}
q \in \partial J(u) \qquad \Leftrightarrow \qquad q_l \left\{ \begin{array}{lc} = 1 & \text{ if } u_l >0, \\ =-1 & \text{ if } u_l <0, \\ \in [-1,1] & \text{ if } u_l =0   .
\end{array} \right.
\end{align}
Consequently, the $\hat{p}$ defined by equation \eqref{eq:minimizingSubgradient} meets $\hat{p}_l = 0$ for all $l$ with $u_l = 0$. Consider any other $q \in \partial J(u)$. Then
\begin{align*}
\langle \hat{p}, \hat{p} - q \rangle &= \underbrace{\sum_{l, u_l>0} \hat{p_l}(\underbrace{\hat{p_l}}_{=1} - \underbrace{q_l}_{=1})}_{=0}+\underbrace{\sum_{l, u_l<0} \hat{p_l}(\underbrace{\hat{p_l}}_{=-1} - \underbrace{q_l}_{=-1})}_{=0}+\underbrace{\sum_{l, u_l=0} \underbrace{\hat{p_l}}_{=0}(\hat{p_l} - q_l)}_{=0} =0,
\end{align*}
which shows that $J(u) = \|u\|_1$ meets (MINSUB).
\end{example}

Another interpretation of the (MINSUB) condition is geometric. Since in Hilbert spaces the scalar product being zero expresses orthogonality, one could also think of (MINSUB) requiring the subdifferentials of $J$ being well-behaved polyhedrons (or being single valued). For illustration purposes, consider figure \ref{fig:minsubGeometric}. If the faces of polyhedron representing all possible subgradients are oriented such that they are tangent to the circle with radius $\|\hat{p}\|$, then (MINSUB) is met.

\begin{figure}
\includegraphics[width=0.24\textwidth]{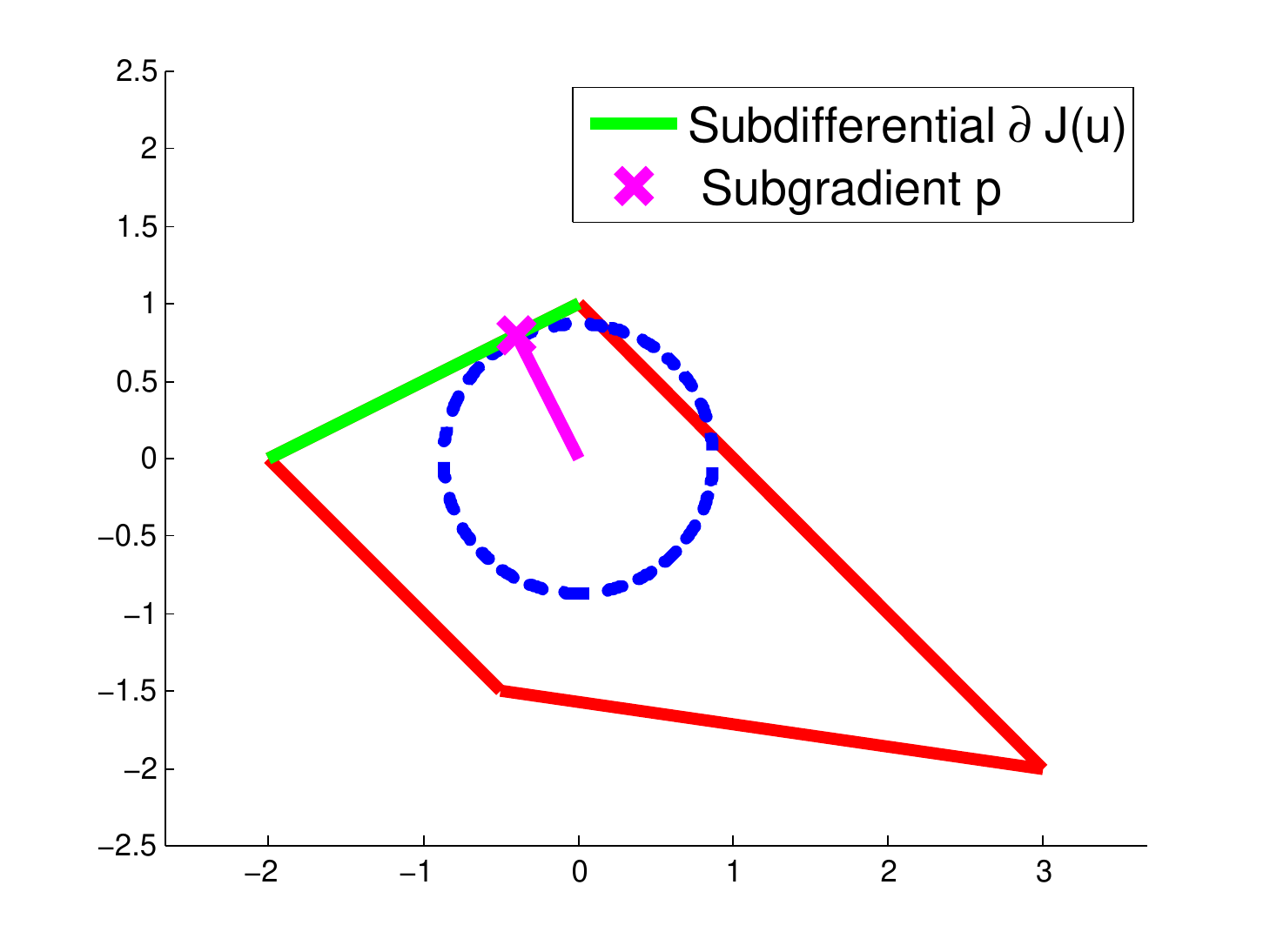}
\includegraphics[width=0.24\textwidth]{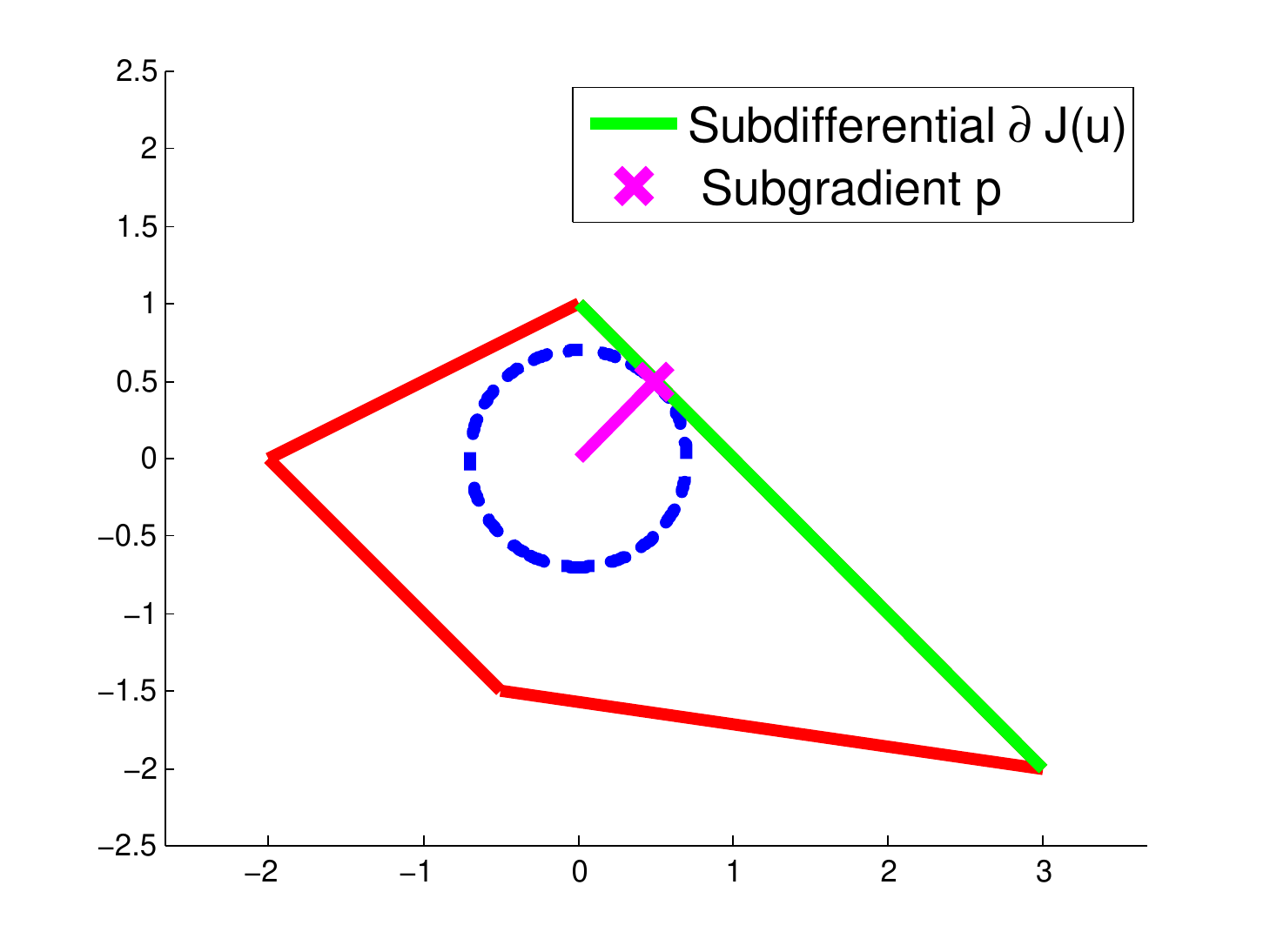}
\includegraphics[width=0.24\textwidth]{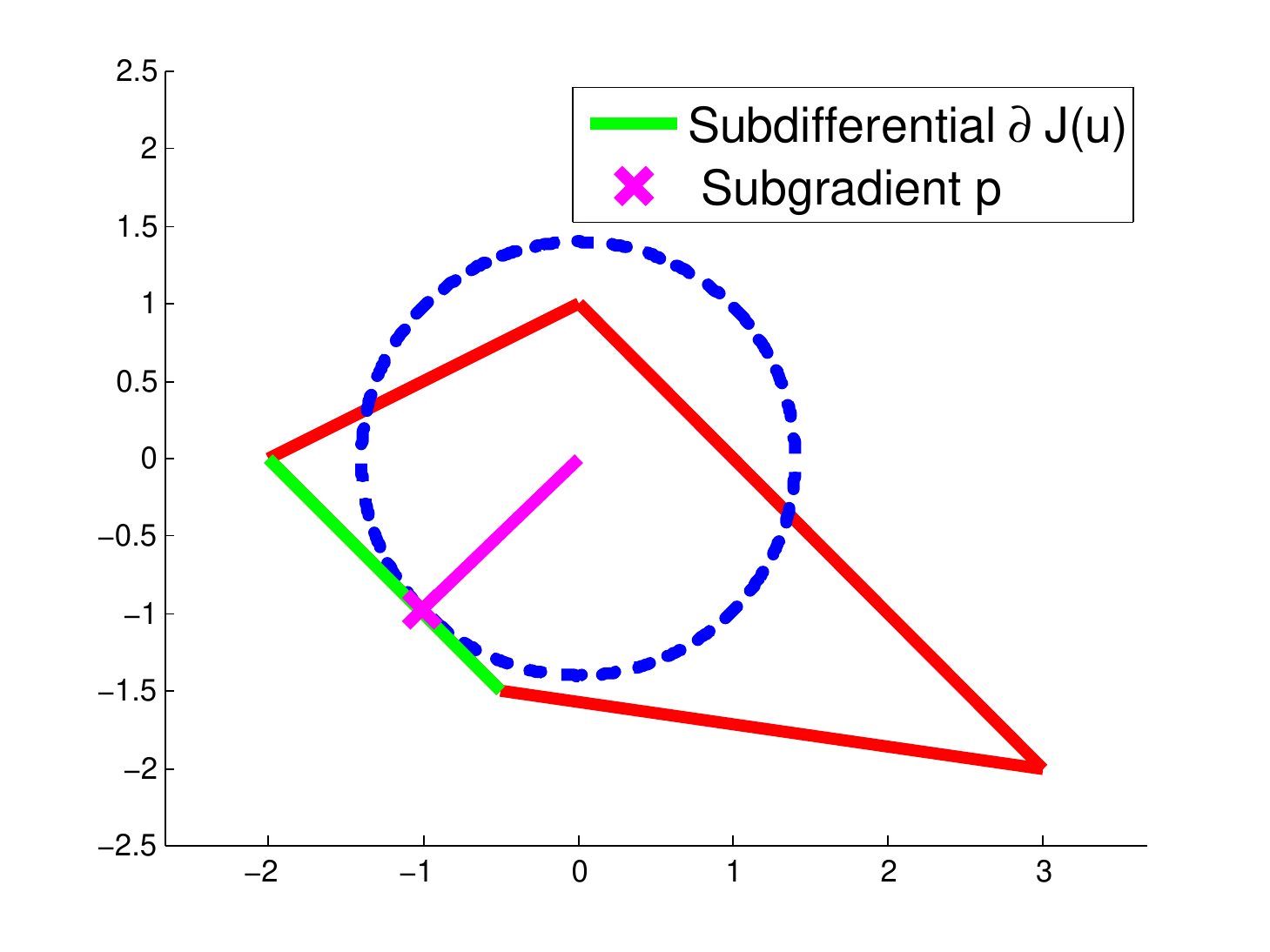}
\includegraphics[width=0.24\textwidth]{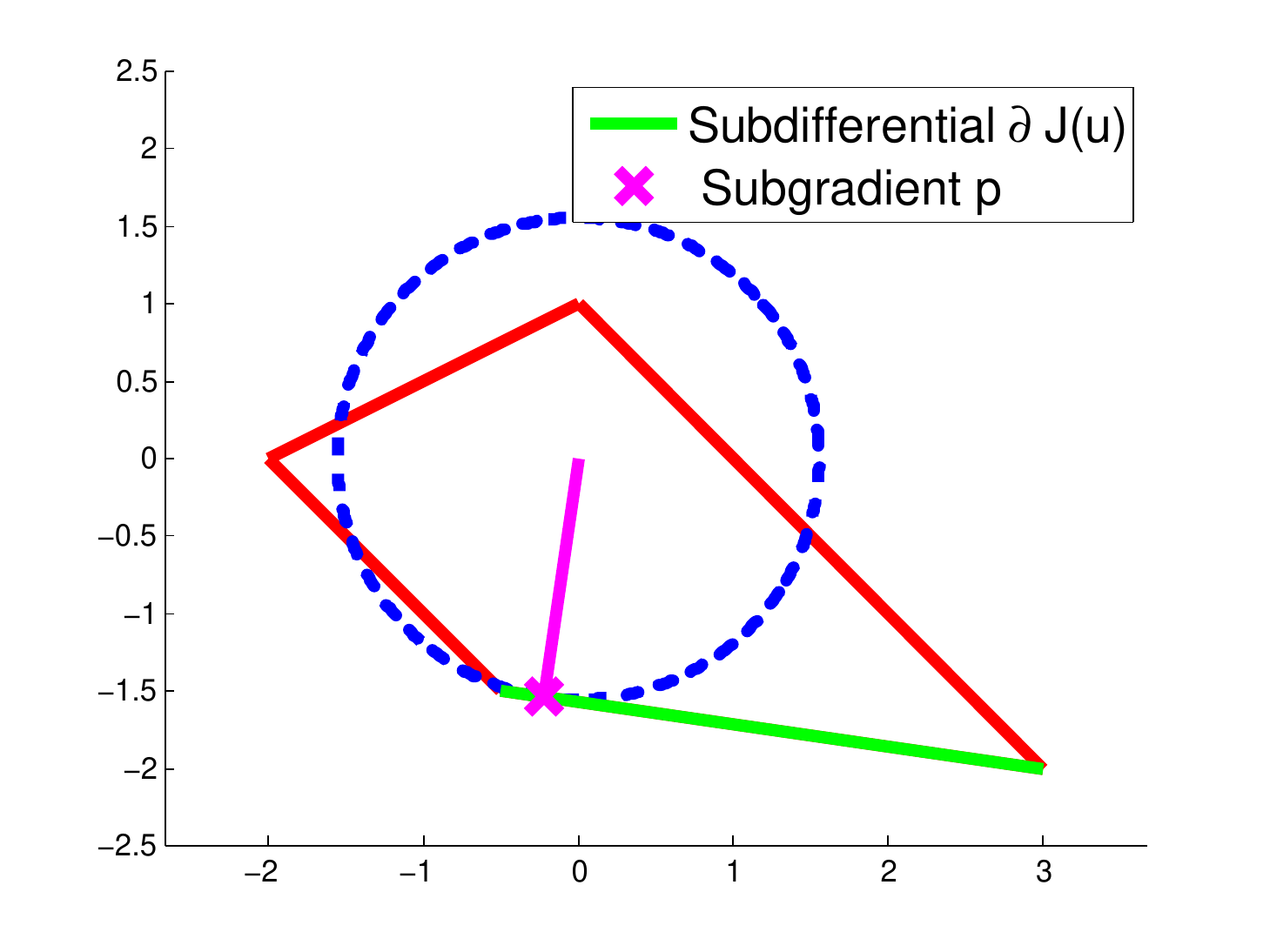}
\caption{Geometric illustration of the (MINSUB) property in 2D with the help of some polyhedron. The green faces illustrate subdifferentials for particular primal variables $u$ and the magenta points mark the subgradient with minimal norm. As we can see all green faces are tangent to the circle such that $q-\hat{p}$ is orthogonal to $\hat{p}$ for any $q\in \partial J(u)$. Hence, (MINSUB) is met.}
\label{fig:minsubGeometric}
\end{figure}

A second property we are interested in, is a particular behavior of the flow that allows to state that the spectral respresentation merely consists of a collection of $\delta$-peaks. For this, we need the definition of a polyhedral seminorm.
\begin{definition}[Polyhedral Seminorm (PS)]
\label{def:PS}
We say that $J$ induces a  \textit{polyhedral seminorm} (PS) if there exists a finite set whose convex hull equals $\partial J(0)$.
\end{definition}

Polyhedral Seminorms have several interesting properties:
\begin{proposition}
\label{prop:finitelyRepresentable}
Let $J$ satisfy (PS), then the set
$$ \{ \arg \min_p \|p\|^2, \ \text{ such that } p \in \partial J(u) ~|~ u \in {\cal X} \}$$
is finite.
\end{proposition}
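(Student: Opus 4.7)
The plan is to exploit the fact that for an absolutely one-homogeneous $J$, the subdifferential $\partial J(u)$ at any point is an exposed face of the polytope $\partial J(0)$, and then invoke the finiteness of the face lattice of a polytope.

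First, I would observe that by (PS) the set $\partial J(0)$ is the convex hull of finitely many points, hence a bounded convex polytope in $\mathbb{R}^n$ (boundedness also follows a posteriori from Conclusion \ref{conclu:boundedSubdiff}). Next, using the characterization \eqref{eq:subdifferential} together with Lemma \ref{subgradientlemma2}, which gives $\partial J(u) \subset \partial J(0)$ and $J(u) = \sup_{p \in \partial J(0)} \langle p, u \rangle$ (the support function of $\partial J(0)$ evaluated at $u$), I would derive the identity
\begin{equation*}
\partial J(u) = \Bigl\{ p \in \partial J(0) \;\Big|\; \langle p, u \rangle = \max_{q \in \partial J(0)} \langle q, u \rangle \Bigr\}.
\end{equation*}
This exhibits $\partial J(u)$ as the exposed face of $\partial J(0)$ in the direction $u$.

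The next step is to invoke the standard fact from polyhedral theory that a polytope has only finitely many faces (each face being the convex hull of some subset of the finite vertex set generating $\partial J(0)$). Consequently, as $u$ ranges over $\mathbb{R}^n$, the set $\partial J(u)$ can only take finitely many distinct values, one for each face of $\partial J(0)$.

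Finally, since each such face is a nonempty closed convex subset of $\mathbb{R}^n$, the strictly convex minimization problem $\min_p \Vert p \Vert^2$ subject to $p \in \partial J(u)$ has a unique solution. Collecting these unique minimizers across the finitely many possible faces yields a finite set, which proves the assertion. The main subtlety, and the only step that really needs justification beyond citing definitions, is the identification of $\partial J(u)$ as the exposed face of $\partial J(0)$ in direction $u$; once this is established, the finiteness of the face lattice of a polytope and the uniqueness of minimum-norm projections onto closed convex sets immediately conclude the proof.
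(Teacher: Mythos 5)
Your proof is correct and follows essentially the same route as the paper's: both identify $\partial J(u)$ as a face of the polytope $\partial J(0)$ (the paper via Lemma \ref{subgradientlemma1} as the intersection with the supporting hyperplane $\langle p, u\rangle = J(u)$, you via the support-function characterization), then conclude from the finiteness of the face lattice and the uniqueness of the minimum-norm element of each closed convex face. Your explicit justification of the exposed-face identity is, if anything, slightly more careful than the paper's.
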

\begin{proof}
By Lemma \ref{subgradientlemma1} we know that $\partial J(u)$ is the intersection of the polyhedral shape $\partial J(0)$ and the linear manifold defined by $\langle p, u \rangle = J(u)$. Since the intersection cannot contain any interior part of $\partial J(0)$ and due to (PS), the intersection must be a facet of $\partial J(0)$. Hence, the set of minimizers of $\Vert p \Vert$ in the subgradients of $\partial J(u)$ is contained in the set of minimizers of $\Vert p \Vert$ on single facets of $\partial J(0)$. Since the number of facets is finite by (PS), also the set of minimizers (for each facet there is a unique one) is finite.
\end{proof}

As an example, consider the $\ell^1$ norm on $\mathbb{R}^n$ again. The set $\partial \|0\|_1$ coincides with the unit $\ell^\infty$ ball and therefore is polyhedral. The set considered in Proposition \ref{prop:finitelyRepresentable} is the collection of all $p$ such that the $i$-th component of $p$ is in $\{-1,1,0\}$, which is of course finite. Note that all regularizations of the form $J(u) = \|Ku\|_1$, for an arbitrary linear operator $K$, are also polyhedral as $\partial J(0) = K^T \partial \|0\|_1$. Interestingly, the most general form of a $J$ meeting (PS) is in fact $J(u) = \|Ku\|_\infty$:
\begin{proposition}\
Let an absolutely one-homogeneous function $J: \mathbb{R}^n \rightarrow \mathbb{R}$ meet (PS). Then there is a matrix $P$ such that $J(u) = \|Pu\|_\infty$.
\end{proposition}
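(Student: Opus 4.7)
The plan is to identify $J$ explicitly as the support function of $\partial J(0)$ and then exploit the polyhedral symmetric structure of $\partial J(0)$ to rewrite this support function as an $\ell^\infty$ norm of a linear transformation.

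First I would invoke Lemma \ref{lem:Jstar}, which says $J^* = \chi_{\partial J(0)}$. Taking the convex conjugate once more (using that $J$ is proper, convex, lower semi-continuous, so $J^{**}=J$) yields
\begin{equation*}
J(u) \;=\; \sup_{p \in \partial J(0)} \langle p, u \rangle,
\end{equation*}
i.e., $J$ is the support function of the set $\partial J(0)$. By (PS), $\partial J(0) = \mathrm{conv}\{p_1,\dots,p_k\}$ for a finite collection of vectors. Since the supremum of a linear functional over a convex hull is attained at an extreme point, this simplifies to
\begin{equation*}
J(u) \;=\; \max_{i=1,\dots,k} \langle p_i, u \rangle.
\end{equation*}

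Next I would use absolute one-homogeneity, $J(-u)=J(u)$, to conclude that the set $\partial J(0)$ is symmetric about the origin: if $p \in \partial J(0)$, then $-p \in \partial J(0)$. (This follows by evaluating the support function characterization at $u$ and $-u$: symmetry of the values forces symmetry of the supporting set, because the support function determines a closed convex set uniquely.) Enlarging the generating set by including $-p_i$ for each $i$ does not change the convex hull, so without loss of generality I can assume $\{p_1,\dots,p_k\}$ is symmetric, and after relabeling write it as $\{p_1,-p_1,\dots,p_m,-p_m\}$.

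With this symmetric pairing, the formula becomes
\begin{equation*}
J(u) \;=\; \max_{j=1,\dots,m} \max\bigl( \langle p_j, u\rangle , \langle -p_j, u\rangle\bigr) \;=\; \max_{j=1,\dots,m} \bigl| \langle p_j, u \rangle\bigr|.
\end{equation*}
Defining the matrix $P \in \mathbb{R}^{m \times n}$ whose rows are $p_j^T$ then immediately gives $J(u) = \|Pu\|_\infty$, as required.

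The construction is essentially bookkeeping once the duality identification is made; the only subtle step is justifying the reduction to a symmetric generating set, which follows because the closed convex set $\partial J(0)$ is uniquely recovered from its support function $J$, so $J(u)=J(-u)$ forces $\partial J(0)=-\partial J(0)$. The main obstacle is to make sure one does not lose anything when replacing the original generators by the symmetrized extreme-point set, and to verify that the maximum is indeed attained on extreme points (which is immediate since $\partial J(0)$ is a polytope, hence the convex hull of its extreme points, all of which lie in the original finite generating set by Carath\'eodory/Minkowski).
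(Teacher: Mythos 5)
Your proof is correct, but it reaches the key identity $J(u) = \max_i \langle p_i, u\rangle$ by a different route than the paper. You go through duality: Lemma \ref{lem:Jstar} plus Fenchel--Moreau biconjugation identifies $J$ as the support function of the polytope $\partial J(0)$, after which the maximum over the finite generating set is immediate. The paper instead argues primally: it takes an arbitrary $p \in \partial J(u)$, writes it as a convex combination $p = \sum_i \alpha_i p_i$ of the generators, and exploits the equality case in $J(u) = \sum_i \alpha_i \langle p_i, u\rangle \leq J(u)$ to conclude that every $p_i$ with $\alpha_i > 0$ attains $\langle p_i, u\rangle = J(u)$, which gives the same max formula using only the characterization \eqref{eq:subdifferential} and not the biconjugation theorem. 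Your approach is the more standard convex-analytic one and arguably cleaner; the paper's is more self-contained given the lemmas it has already established. For the symmetrization you are also more careful than the paper (which just asserts ``we may as well consider $\max_i |\langle p_i, u\rangle|$''): your argument that $J(u)=J(-u)$ forces $\partial J(0) = -\partial J(0)$ because a closed convex set is determined by its support function is valid. A small remark: the symmetrization of the generating set is not actually needed, since $\max_i |\langle p_i, u\rangle| = \max\bigl(\max_i \langle p_i, u\rangle, \max_i \langle p_i, -u\rangle\bigr) = \max\bigl(J(u), J(-u)\bigr) = J(u)$ directly, so you could take $P$ to have rows $p_1,\dots,p_k$ without relabeling.
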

\begin{proof}
Let $u \in \mathbb{R}^n$ be arbitrary, and let $p\in \partial J(u)$. By (PS) we can write $p$ as a convex combination of finitely many subgradients $p_i \in \partial J(0)$, $p = \sum_i \alpha_i p_i$. We find
\begin{align*}
J(u) = \langle u, p \rangle = \sum_i \alpha_i \langle u, p_i \rangle \leq \sum_i \alpha_i J(u) = J(u).
\end{align*}
The fact that equality holds in the above estimate shows that $p_i \in \partial J(u)$ for all $i$ with $\alpha_i >0$, such that $J(u) = \max_i \langle p_i, u \rangle$. Since $J(u) = J(-u)$ by the  absolute one-homogenity, we may as well consider $J(u) = \max_i |\langle p_i, u \rangle|$, or, after writing the $p_i$ as rows of a matrix $P$, $J(u) = \|Pu\|_\infty$.
\end{proof}


To be able to state our main results compactly, let us introduce a third abbreviation in addition to (PS) and (MINSUB).
\begin{definition}[DDL1]
\label{def:DDL1}
We refer to the specific assumption that we are considering diagonally dominant $\ell^1$ regularization, i.e. the particular case where $J(u) = \|Ku\|_1$ for a matrix $K$ such that $KK^*$ is diagonally dominant, as (DDL1).
\end{definition}

\subsection{Connection between Spectral Decompositions}
Using the three assumptions and definitions (PS), (MINSUB) and (DDL1) introduced in the previous subsection, we can state the following results. 

\begin{theorem}[Piecewise dynamics for $J$ satisfying (PS)]
\label{thm:piecewiseDynamics}
If $J$ meets (PS) then the scale space flow, the inverse scale space flow, and the variational method have piecewise linear respectively piecewise constant dynamics. In more detail:
\begin{itemize}
\item There exist finitely many $0=t_0<t_1<t_2<...<t_K=\infty $ such that the solution of \eqref{eq:scaleSpace} is given by
\begin{align}
\label{eq:u_GF_PS}
u_{GF}(t) = u_{GF}(t_i) - (t-t_i) p_{GF}(t_{i+1}),
\end{align}
for $t \in [t_i,t_{i+1}]$. In other words $p_{GF}(t_{i+1})\in \partial J( u_{GF}(t))$ for $t \in [t_i,t_{i+1}]$ such that the dynamics of the flow is piecewise linear in $u_{GF}(t)$ and piecewise constant in $p_{GF}(t)$.
\item There exist finitely many $0<s_1<s_2<...<s_L=\infty $ such that the solution of \eqref{eq:inverseScaleSpace} is given by
\begin{align}
\label{eq:q_IS_PS}
q_{IS}(s) = q_{IS}(s_i) - (s-s_i) (f-v_{IS}(s_{i+1})),
\end{align}
for $s \in [s_i,s_{i+1}]$. In other words $q_{IS}(s)\in \partial J( v_{IS}(s_{i+1}))$ for $s \in [s_i,s_{i+1}]$ such that the dynamics of the flow is piecewise linear in $q_{IS}(t)$ and piecewise constant in $v_{IS}(t)$.
\item  There exist finitely many $0<t_1<t_2<...<t_L=\infty $ such that the solution of \eqref{eq:variationalMethod} satisfies that $u_{VM}$ is an affinely linear function of $t$ for $t \in [t_i,t_{i+1}]$, and for $p_{VM}(t) \in \partial J(u_{VM}(t))$ denoting the corresponding subgradient, $q_{VM}(s) = p_{VM}(1/s)$ is an affinely linear function of $s$ for $s \in [s_i,s_{i+1}]$, $s_i = \frac{1}{t_i}$.
\end{itemize}
\end{theorem}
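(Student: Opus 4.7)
The unifying observation is that (PS) makes the face lattice of $\partial J(0)$ finite, and by Lemma \ref{subgradientlemma1} every subdifferential $\partial J(u)$ equals the face of $\partial J(0)$ exposed by the linear functional $\langle \cdot,u \rangle$. Hence there are only finitely many distinct sets of the form $\partial J(u)$ and, by Proposition \ref{prop:finitelyRepresentable}, only finitely many corresponding minimal-norm subgradients. The plan is to show in each of the three cases that the dynamics are locally governed by a single face, and that the transitions between faces can occur only finitely often.

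\textbf{Gradient flow.} I would build the partition $0=t_0<t_1<\ldots<t_K=\infty$ recursively. Set $u(0)=f$ and let $\hat p_1$ be the minimal-norm element of $\partial J(f)$. Define
\[
 t_1 := \sup\bigl\{ t>0 \;:\; \hat p_1 \in \partial J(f-\tau \hat p_1) \text{ for all } \tau \in [0,t] \bigr\}.
\]
On $[0,t_1]$ the candidate $u(t)=f-t\hat p_1$, $p(t)=\hat p_1$ satisfies the differential inclusion \eqref{eq:scaleSpace}, and by the uniqueness of the maximal-monotone gradient flow it must coincide with $u_{GF}$. Iterating this construction from $u_{GF}(t_i)$ with $\hat p_{i+1}$ the minimal-norm subgradient at $u_{GF}(t_i^+)$ yields the formula \eqref{eq:u_GF_PS}. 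For finiteness I would invoke the well-known monotonicity $t \mapsto \|\hat p(t)\|$ (non-increasing for minimal-selection gradient flows of convex functions). At every switching time, the active face changes from some $F$ to $F'$, and since $\hat p_i$ was the minimal-norm element on $F$ but is no longer in $F'$, the new minimal-norm element $\hat p_{i+1}$ on $F'$ has strictly smaller norm. Since the set of candidate minimal-norm subgradients is finite by Proposition \ref{prop:finitelyRepresentable}, only finitely many switches are possible, so $K<\infty$.

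\textbf{Inverse scale space and variational method.} For the inverse scale space flow I would exploit that \eqref{eq:inverseScaleSpace} is the gradient flow of $J^*(q)-\langle f,q\rangle$ in $q$, and that by Lemma \ref{lem:Jstar} $J^*$ is the indicator of $\partial J(0)$, a polyhedron with finitely many faces by (PS). Consequently $v_{IS}(s) \in \partial J^*(q_{IS}(s))$ is the selected element of the normal cone of $\partial J(0)$ at $q_{IS}(s)$; this normal cone is constant as $q_{IS}$ traverses the relative interior of a fixed face. Hence $v_{IS}$ is piecewise constant and $q_{IS}$ piecewise affine, yielding \eqref{eq:q_IS_PS}; the same face-counting argument as above gives the finiteness $L<\infty$. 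For the variational method, the optimality condition gives $u_{VM}(t)=f-t\,p_{VM}(t)$ with $p_{VM}(t)\in\partial J(u_{VM}(t))$. If on an interval the active face $F \subseteq \partial J(0)$ does not change, then $u_{VM}$ is constrained to an affine solution path: writing $u(t)=a+bt$ one gets $p(t)=(f-a)/t-b$, so after the substitution $s=1/t$ the subgradient $q_{VM}(s)=p_{VM}(1/s)=(f-a)s-b$ becomes affine in $s$. Again the intervals on which the active face is fixed are finite in number by (PS), and they partition $(0,\infty)$.

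\textbf{Main obstacle.} The delicate point I expect is the finiteness/termination argument for the gradient flow: while $\|\hat p(t)\|$ is non-increasing, one must rule out an infinite sequence of infinitesimally small jumps that accumulate at some finite time. I would handle this by showing that the minimal-norm element is taken from the finite list of Proposition \ref{prop:finitelyRepresentable} and that whenever $\hat p_i \ne \hat p_{i+1}$ one has strict inequality $\|\hat p_{i+1}\|<\|\hat p_i\|$ (because the affine span of the new face $F'$ does not contain $\hat p_i$, so the orthogonal projection onto $F'$ shortens the vector). A second subtle point is to match the piecewise constructed solution in each case with the unique solution of the differential inclusion / variational problem, for which uniqueness of the respective monotone flows and strict convexity of the ROF functional are the right tools.
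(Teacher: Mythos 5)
Your overall architecture matches the paper's: a recursive construction driven by the minimal-norm subgradient for the gradient flow, the dual polyhedral gradient flow of $J^*(q)-\langle f,q\rangle$ for the inverse scale space flow, and finiteness via the strict decrease of $\Vert \hat p_i\Vert$ over the finite candidate set of Proposition \ref{prop:finitelyRepresentable}. However, there is a genuine gap at the central step: you never prove that each interval of your construction has \emph{strictly positive length}, i.e.\ that $\hat p_{i+1}\in\partial J(u_{GF}(t_i))$ remains a subgradient of $u_{GF}(t_i)-\tau\hat p_{i+1}$ for all sufficiently small $\tau>0$. Your $t_1$ is defined as a supremum that could a priori be $0$; if it is, the recursion stalls (the same point and the same minimal-norm subgradient recur, no ``switch'' with strict norm decrease ever happens), and the finite-switch counting never gets off the ground. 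Note that your ``main obstacle'' paragraph addresses a different issue (accumulation of switching times), which the strict-norm-decrease argument does handle, but it does not handle zero-length dwell times. This positivity is exactly where the paper spends its effort and where (PS) enters: it enumerates the finitely many candidate subgradients $q^j$, splits into $q^j\in\partial J(u_{GF}(t_i))$ (controlled by the optimality condition $\langle \hat p_{i+1},\hat p_{i+1}-q\rangle\le 0$ of the norm minimization) and $q^j\notin\partial J(u_{GF}(t_i))$ (for which the strict gap $c^j(t_i)=J(u_{GF}(t_i))-\langle q^j,u_{GF}(t_i)\rangle>0$ of Remark \ref{rem:subdiffRemark} buys a uniformly positive dwell time), and then takes the minimum over the finite index set. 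The analogous step is also missing in your inverse scale space argument, where the paper writes $\partial J(0)=\{q\,|\,Bq\le c\}$ and checks that inactive constraints have strictly positive slack while active ones are preserved by the optimality condition of $\min_v\Vert v-f\Vert^2$.

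A secondary weakness is the variational method: you \emph{assume} $u_{VM}(t)=a+bt$ on an interval where the active face is fixed and then verify consistency, which presupposes the conclusion. The paper instead derives it: comparing optimality conditions at $s$ and $\tilde s_i$ shows that $v_{VM}(s)=u_{VM}(1/s)$ is the Euclidean projection of the path $\frac{\tilde s_i}{s}v_{VM}(\tilde s_i)+\frac{s-\tilde s_i}{s}f$ (affine in $t=1/s$) onto the polyhedron $\{v\,|\,q_{VM}(\tilde s_i)\in\partial J(v)\}$, and the projection of an affine path onto an intersection of finitely many half-spaces is piecewise affine. You would need some such argument to close this case as well.
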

\begin{proof}

\textbf{Gradient Flow. }
Let us begin with the proof of the piecewise linear dynamics of the scale space flow. For any $t_{i}\geq 0$ (starting with $t_0=0$ and $u_{GF}(0)=f$) we consider
\begin{align}
\label{eq:minimizationGf}
p_{GF}(t_{i+1}) = \arg \min_p \|p\|^2 \qquad \text{such that } p \in \partial J(u_{GF}(t_i))
\end{align}
and claim that $p_{GF}(t_{i+1}) \in \partial J(u_{GF}(t_i) - (t-t_i) p_{GF}(t_{i+1}))$ for small enough $t>t_i$.
Due the property of $J$ having finitely representable subdifferentials (Proposition \ref{prop:finitelyRepresentable}), we know that for any $t$ there has to exist a $q^j$ in the finite set $M$ such that $q^j \in \partial  J(u_{GF}(t_i) - (t-t_i) p_{GF}(t_{i+1}))$. The characterization of the subdifferential of absolutely one-homogeneous functions \eqref{eq:subdifferential} tells us that $q^j \in \partial  J(u_{GF}(t_i) - (t-t_i) p_{GF}(t_{i+1}))$ is met by those $q^j$ which maximize $\langle q^j, u_{GF}(t_i) - (t-t_i) p_{GF}(t_{i+1})\rangle$. We distinguish two cases:
\begin{itemize}
\item First of all consider $q^j \in \partial J(u_{GF}(t_i))$. In this case the optimality condition to \eqref{eq:minimizationGf} tells us that
\begin{align}
 \langle p_{GF}(t_{i+1}), p_{GF}(t_{i+1})-q \rangle \leq 0 \ \ \forall q \in \partial J(u_{GF}(t_i)),
 \label{eq:optiPgradFlow}
\end{align}
and we find
\begin{align*}
\langle u_{GF}(t_i) - (t-t_i) p_{GF}(t_{i+1}),  q^j \rangle =& J( u_{GF}(t_i)) - (t-t_i) \langle p_{GF}(t_{i+1}),  q^j \rangle, \\
=& \langle u_{GF}(t_i), p_{GF}(t_{i+1})\rangle  - (t-t_i) \langle p_{GF}(t_{i+1}), q^j \rangle, \\
=& \langle u_{GF}(t_i) - (t-t_i) p_{GF}(t_{i+1}),  p_{GF}(t_{i+1}) \rangle \\
&+ (t-t_i) \langle p_{GF}(t_{i+1}),p_{GF}(t_{i+1})- q^j \rangle, \\
\stackrel{\eqref{eq:optiPgradFlow}}{\leq}& \langle u_{GF}(t_i) - (t-t_i) p_{GF}(t_{i+1}),  p_{GF}(t_{i+1}) \rangle .
\end{align*}

\item For $q^j \notin \partial J(u_{GF}(t_i))$ we can use Remark \ref{rem:subdiffRemark}, define $c^j(t_i) := J(u_{GF}(t_i)) - \langle q^j, u_{GF}(t_i) \rangle>0$. We compute
\begin{align*}
\langle u_{GF}(t_i) - (t-t_i) p_{GF}(t_{i+1}),  q^j \rangle =& \langle u_{GF}(t_i), q^j \rangle - (t-t_i)\langle p_{GF}(t_{i+1}),  q^j \rangle \\
=&  J(u_{GF}(t_i)) - c^j(t_i) - (t-t_i) \langle p_{GF}(t_{i+1}),  q^j \rangle \\
=&  J(u_{GF}(t_i)) - (t-t_i)\| p_{GF}(t_{i+1})\|^2 \\
&- \underbrace{\left( c^j(t_i) - (t-t_i)(\| p_{GF}(t_{i+1})\|^2 - \langle p_{GF}(t_{i+1}),  q^j \rangle)\right)}_{\geq 0 \text{ for small }t>t_i} \\
\leq& \langle u_{GF}(t_i) - (t-t_i) p_{GF}(t_{i+1}),  p_{GF}(t_{i+1})  \rangle \ \text{ for small } t.&
\end{align*}
Since the set of all $q^j$ is finite, we can conclude that there exists a smallest time $t>t_i$ over all $j$ up until which $p_{GF}(t_{i+1}) \in \partial J(u_{GF}(t_i) - (t-t_i) p_{GF}(t_{i+1}))$.
\end{itemize}
To see that the number of times $t_k$ at which the flow changes is finite, recall that each $p_{GF}(t_{i+1})$ is determined via equation \eqref{eq:minimizationGf} and that the corresponding minimizer is unique. Since $p_{GF}(t_i)$ is in the feasible set, we find $\|p_{GF}(t_{i+1})\|<\|p_{GF}(t_{i})\|$ (unless $p_{GF}(t_{i}) = p_{GF}(t_{i+1})$ in which case the time step $t_i$ was superfluous to consider). Since the number of possible $p_{GF}(t_i)$ is finite by Proposition \ref{prop:finitelyRepresentable}, and the property $\|p_{GF}(t_{i+1})\|<\|p_{GF}(t_{i})\|$ shows that each $p_{GF}(t_i)$ be attained at most once, the number of $t_i$ at which the flow changes is finite. This concludes the proof for the scale space case.
\vspace{0.3cm}

\textbf{Inverse Scale Space Flow. }Similar to the above proof, for any $s_{i}\geq 0$ (starting with $s_0=0$ and $q_{IS}(0)=0$) we consider
$$v_{IS}(s_{i+1}) = \arg \min_v \|v-f\|^2 \qquad \text{such that } v \in \partial J^*(q_{IS}(s_i))$$
and claim that $q_{IS}(s_{i})+(s-s_i)(f-v_{IS}(s_{i+1}))\in \partial J(v_{IS}(s_{i+1}))$ for small enough $s>s_i$.

The optimality condition to the above minimization problem is
\begin{align}
\label{eq:inequality}
 \langle f-v_{IS}(s_{i+1}), v_{IS}(s_{i+1}) - v \rangle \geq 0  \qquad \forall v \in \partial J^*(q_{IS}(s_i)).
\end{align}
By choosing $v = 0 $ and $v = 2v_{IS}(s_{i+1})$ we readily find
$$ \langle f-v_{IS}(s_{i+1}), v_{IS}(s_{i+1}) \rangle = 0,$$
and therefore
$$\langle q_{IS}(s_{i})+(s-s_i)(f-v_{IS}(s_{i+1})), v_{IS}(s_{i+1}) \rangle = J(v_{IS}(s_{i+1}) )$$
for all $t$. Furthermore \eqref{eq:inequality} reduces to
\begin{align}
\label{eq:inequality2}
 \langle f-v_{IS}(s_{i+1}), v \rangle \leq 0  \qquad \forall v \in \partial J^*(q_{IS}(s_i)).
\end{align}
%
It is hence sufficient to verify that
$$q_{IS}(s_{i})+(s-s_i)(f-v_{IS}(s_{i+1})) \in \partial J(0) $$
for $s>s_i$ sufficiently small. It is well known (cf. \cite{ConvexPolytopes}) that any set that is represented as the convex hull of finitely many points, can also be represented as the intersection of finitely many halfspaces, such that
$$ \partial J^*(0) =  \{q ~|~ Bq \leq c \} $$
for some matrix $B$ and vector $c$. Let $b_l$ denote the rows of the matrix $B$. Since $\langle b_l, q \rangle = J(b_l)$ for $q \in \partial J(b_l)$, it is obvious that $J(b_l)\leq c_l$. We distinguish two cases:
\begin{itemize}
\item If $b_l \in \partial J^*(q_{IS}(s_i))$, then \eqref{eq:inequality2} yields
$$\langle q_{IS}(s_{i+1})+(s-s_i)(f-v_{IS}(s_{i+1})), b_l \rangle = J(b_l) + (s-s_i)\langle f-v_{IS}(s_{i+1}), b_l \rangle \leq J(b_l)$$
\item If $b_l \notin \partial J^*(q_{IS}(s_i))$, then $\langle q_{IS}(s_{i}), b_l \rangle < J(b_l)$, such that
$$\langle q_{IS}(s_{i})+(s-s_i)(f-v_{IS}(s_{i+1})), b_l \rangle \leq J(b_l)$$
for $s \in [s_i, s_{i+1}^l]$ with $s_{i+1}^l>s_i$ small enough.
\end{itemize}
We define $s_{i+1} = \min_l s_{i+1}^l$, which yields the piecewise constant behavior. Similar to the proof for the gradient flow, the $v_{IS}(s_{i+1})$ are unique and $v_{IS}(s_{i})$ is in the feasible set, such that $\|f-v_{IS}(s_{i+1})\|<\|f-v_{IS}(s_{i})\|$. Since the total number of possible constraints $v \in \partial J^*(p)$ is finite, there can only be finitely many times $s_i$ at which the flow changes.

\vspace{0.3cm}

\textbf{Variational Method. }
Since $q_{VM}$ is a Lipschitz continuous function of $s$, the subgradient changes with finite speed on $\partial J(0)$. This means that $p_{VM}$ needs finite time to change from a facet to another facet of the polyhedral set $\partial J(0)$, in other words there exist times $0<\tilde s_1<\tilde s_2<...<\tilde s_L=\infty $ such that $q_{VM}(\tilde s_i) \in \partial J(u_{VM}(s))$ for $s \in [\tilde s_i,\tilde s_{i+1}). $
Comparing the optimality condition for a minimizer at time $s$ and time $\tilde s_i$ we find
$$ s (v_{VM}(s) - \frac{\tilde s_i}s v_{VM}(s_i) -  \frac{s- \tilde s_i}s f) + q_{VM}(s) -q_{VM}(\tilde s_i)=0. $$
Now let $v$ be any element such that $q_{VM}(\tilde s_i) \in \partial J(v)$. Then we have
$$ \langle v_{VM}(s) - \frac{\tilde s_i}s v_{VM}(\tilde s_i) -  \frac{s- \tilde s_i}s f, v - v_{VM}(s) \rangle
= \frac{1}s \langle q_{VM}(s) -q_{VM}(\tilde s_i), v_{VM}(s) - v \rangle \geq 0.  $$
This implies that $v_{VM}(s)$ is the minimizer
$$ v_{VM}(s) = \arg \min_v \Vert v -  \frac{\tilde s_i}s v_{VM}(\tilde s_i) -  \frac{s- \tilde s_i}s f\Vert^2  \qquad \text{ such that  }q_{VM}(\tilde s_i) \in \partial J(v), $$
i.e. it equals the projection of $\frac{\tilde s_i}s v_{VM}(\tilde s_i) +  \frac{s- \tilde s_i}s f$ on the set of $v$ such that $q_{VM}(\tilde s_i) \in \partial J(v)$, which is an intersection of a finite number of half-spaces.
Since  $u_{VM}(\frac{1}s)$ is an affinely linear function for $s \in [\tilde s_i, \tilde s_{i+1}]$, its projection to the intersection of a finite number of half-spaces is piecewise affinely linear for $s \in [\tilde s_i, \tilde s_{i+1}]$. Hence, the overall dynamics of $u_{VM}$ is piecewise affinely linear in $t$. The piecewise affine linearity of $p_{VM}$ in terms of $s=\frac{1}t$ then follows from the optimality condition by a direct computation.
\end{proof}

Note that the results regarding the scale space and inverse scale space flows were to be expected based on the work \cite{polyhedralISS13} on polyhedral functions. The above notation and the proof is, however, much more accessible since it avoids the lengthy notation of polyhedral and finitely generated functions.

From Theorem \ref{thm:piecewiseDynamics} we can draw the following simple but important conclusion.
\begin{conclusion}
If $J$ meets (PS) then the scale space flow, the inverse scale space flow, and the variational method  have a well-defined spectral representation, which consists of finitely many $\delta$-peaks. In other words, the spectral representation is given by
\begin{align}
 \phi_*(t) = \sum_{i=0}^{N_* }\phi_{*}^i \delta(t-t_i), \qquad \text{ for  } * \in \{VM, GF, IS \},
\end{align}
and
the reconstruction formulas \eqref{eq:recon_vm}, \eqref{eq:recon_gf}, or \eqref{eq:recon_iss}, yield a decomposition of $f$ as
\begin{align}
\label{eq:finiteDecomposition}
 f = \sum_{i=0}^{N_* }\phi_{*}^i  , \qquad \text{ for  } * \in \{VM, GF, IS \},
\end{align}
where $t_i$ are the finite number of times where the piecewise behavior of the variational method, the gradient flow, or the inverse scale space flow stated in Theorem \ref{thm:piecewiseDynamics} changes. The corresponding $\phi^i_*$ can be seen as multiples of $\psi_*(t)$ arising from the polar decomposition of the spectral frequency representation. They are given by
\begin{align*}
\phi^i_{GF} =& t_i (p_{GF}(t_{i}) - p_{GF}(t_{i+1})), \\
\phi^i_{VM} =& t_i (u_{VM}(t_{i+1}) - 2u_{VM}(t_{i}) + u_{VM}(t_{i-1}), \\
\phi^i_{IS} =& u_{IS}(t_{i}) - u_{IS}(t_{i+1}),
\end{align*}
with $u_{IS}(t_0) = u_{VM}(t_0) = u_{VM}(t_{-1}) = f$, $p_{GF}(t_0)=0$, $t_0=0$.
\end{conclusion}

Naturally, we should ask what the relation between the different spectral decomposition methods proposed in Section \ref{sec:Spectral} is. We can state the following results:
\begin{theorem}[Equivalence of $GF$ and $VM$ under (MINSUB)]\
 Let $J$ be such that (PS) and (MINSUB) are satisfied. Then
\begin{enumerate}
\item $p_{GF}(s) \in \partial J(u_{GF}(t))$ for all $t\geq s$.
\item The solution $u_{GF}(t)$ meets $u_{GF}(t)=u_{VM}(t)$ for all $t$ where $u_{VM}(t)$ is a solution of \eqref{eq:variationalMethod}. The relation of the corresponding subgradients is given by
\begin{align}
\label{eq:rhoFormula}
p_{VM}(t) = \frac{1}{t}\int_{0}^t p_{GF}(s) ds \in \partial J(u_*(t)),
\end{align}
for $* \in \{VM, GF\}$.
\end{enumerate}
\label{thm:flowVariationalEquivalence}
\end{theorem}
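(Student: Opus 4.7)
The plan is to leverage Theorem \ref{thm:piecewiseDynamics} heavily: (PS) guarantees that $u_{GF}$ is piecewise linear with finitely many breakpoints $0 = t_0 < t_1 < \dots < t_K$, and that $p_{GF}$ is the piecewise constant function taking value $p_{GF}(t_{i+1})$ on $(t_i,t_{i+1}]$, which is the minimum-norm element of $\partial J(u_{GF}(t_i))$. Part 1 is the substantive claim; once it is in hand, Part 2 follows by a short convexity-and-uniqueness argument.

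For Part 1, I would induct on the number of breakpoints crossed. Fix an interval $(t_j, t_{j+1}]$ on which $p_{GF}(s) = p_{GF}(t_{j+1})$, and aim to show that $p_{GF}(t_{j+1}) \in \partial J(u_{GF}(t))$ for every $t \geq t_{j+1}$. The base case $t \in [t_j,t_{j+1}]$ is already contained in Theorem \ref{thm:piecewiseDynamics}. For the inductive step, suppose the claim holds up to some later breakpoint $t_k \geq t_{j+1}$, i.e.\ $p_{GF}(t_{j+1}) \in \partial J(u_{GF}(t_k))$, and consider $t \in [t_k, t_{k+1}]$, where $u_{GF}(t) = u_{GF}(t_k) - (t-t_k)\,p_{GF}(t_{k+1})$. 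Since $p_{GF}(t_{j+1}) \in \partial J(0)$, the inequality $\langle p_{GF}(t_{j+1}), u_{GF}(t) \rangle \leq J(u_{GF}(t))$ is automatic, so only the equality characterizing a subgradient must be verified. Expanding gives
\begin{align*}
\langle p_{GF}(t_{j+1}), u_{GF}(t) \rangle = J(u_{GF}(t_k)) - (t-t_k)\,\langle p_{GF}(t_{j+1}), p_{GF}(t_{k+1}) \rangle,
\end{align*}
while from $p_{GF}(t_{k+1}) \in \partial J(u_{GF}(t))$ (Theorem \ref{thm:piecewiseDynamics}) one gets
\begin{align*}
J(u_{GF}(t)) = J(u_{GF}(t_k)) - (t-t_k)\,\|p_{GF}(t_{k+1})\|^2.
\end{align*}
The two agree exactly when $\langle p_{GF}(t_{k+1}), p_{GF}(t_{k+1}) - p_{GF}(t_{j+1}) \rangle = 0$, which is precisely what (MINSUB) at $u_{GF}(t_k)$ delivers (using the inductive hypothesis $p_{GF}(t_{j+1}) \in \partial J(u_{GF}(t_k))$ to ensure $p_{GF}(t_{j+1})$ is an admissible $q$). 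This is the crux of the argument and the step most likely to cause trouble if any condition is weakened.

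For Part 2, Part 1 tells us that $\partial J(u_{GF}(t))$ contains every $p_{GF}(s)$ with $s \in (0,t]$. Since $\partial J(u_{GF}(t))$ is a closed convex set and $p_{GF}$ is bounded and piecewise constant, the average
\begin{align*}
p_{VM}(t) := \frac{1}{t}\int_0^t p_{GF}(s)\,ds
\end{align*}
is a convex combination of finitely many elements of $\partial J(u_{GF}(t))$, hence lies in $\partial J(u_{GF}(t))$. On the other hand, integrating the gradient flow $\partial_s u_{GF}(s) = -p_{GF}(s)$ from $0$ to $t$ yields
\begin{align*}
u_{GF}(t) - f + t\,p_{VM}(t) = 0,
\end{align*}
which is precisely the optimality condition for \eqref{eq:variationalMethod} at time $t$, with $p_{VM}(t) \in \partial J(u_{GF}(t))$. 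By strict convexity of the data fidelity (and hence uniqueness of the variational minimizer), we conclude $u_{VM}(t) = u_{GF}(t)$, and simultaneously identify $p_{VM}(t)$ as a valid subgradient at the common value, establishing \eqref{eq:rhoFormula}.

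The main obstacle is the inductive algebraic identity in Part 1: without (MINSUB), the cross term $\langle p_{GF}(t_{j+1}), p_{GF}(t_{k+1})\rangle$ need not equal $\|p_{GF}(t_{k+1})\|^2$, and subgradients from earlier intervals would generically drop out of $\partial J(u_{GF}(t))$ at later times, breaking both the induction and the convex-combination argument that carries Part 2. Everything else amounts to bookkeeping on the finitely many affine pieces guaranteed by Theorem \ref{thm:piecewiseDynamics}.
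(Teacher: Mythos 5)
Your proposal is correct and follows essentially the same route as the paper's proof: the same induction over the breakpoints of the piecewise-linear dynamics, the same algebraic identity showing that (MINSUB) at $u_{GF}(t_k)$ (applied to the minimum-norm subgradient $p_{GF}(t_{k+1})$ against the inductively retained earlier subgradient) forces $\langle p_{GF}(t_{j+1}), u_{GF}(t)\rangle = J(u_{GF}(t))$, and the same conclusion for Part 2 by integrating the flow and using convexity of the subdifferential together with uniqueness of the variational minimizer. Your explicit remark that only the equality part of the subgradient characterization needs checking (the inequality being free since every $p_{GF}$ lies in $\partial J(0)$) is a nice clarification of a step the paper leaves implicit.
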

\begin{proof}
Based on Theorem \ref{thm:piecewiseDynamics} we know that there exist times $0<t_1<t_2<...$ in between which $u(t)$ behaves linearly. We proceed inductively. For $0\leq t \leq t_1$ we have
$$ u_{GF}(t) = u_{GF}(0) - t \; p_{GF}(t_1)= f - t \; p_{GF}(t_1),$$
with $p_{GF}(t_1) \in \partial J(u_{GF}(t))$ for all $t \in [0,t_1]$. The latter coincides with the optimality condition for $u_{GF}(t) = \argmin\limits_{u} \frac{1}{2}\|u-f\|_2^2 + t J(u)$, which shows $u_{GF}(t) = u_{VM}(t)$  for $t \in [0,t_1]$. Due to the closedness of subdifferentials, $p_{GF}(t_1) \in \partial J(u(t))$ for $t \in [0,t_1[$, implies that the latter holds for $t=t_1$ too. Thus, we can state that $p_{GF}(t_1) = p_{VM}(t) = \frac{1}{t}\int_0^t p_{GF}(t) ~dt \in \partial J(u_*(t))$  for $t \in [0,t_1]$, $* \in \{GF,\; VM\}$.

Assume the assertion holds for all $t\in [0, t_i]$, We will show that it holds for $t \in [0,t_{i+1}]$, too. Based on the proof of theorem \ref{thm:piecewiseDynamics}, we know that $u_{GF}(t) = u_{GF}(t_i) + (t-t_i) p_{GF}(t_{i+1})$ for $t \in [t_i,t_{i+1}]$, and $p_{GF}(t_{i+1}) = \argmin_{p \in \partial J(u(t_i))} \|p\|_2^2$. Now (MINSUB) implies $\langle p_{GF}(t_{i+1}), p_{GF}(t_{i+1}) - p_{GF}(t_{j}) \rangle = 0$ for all $j\leq i$. We compute
\begin{align*}
 \langle p_{GF}(t_{j}), u_{GF}(t) \rangle =&  \langle p_{GF}(t_{j}), u_{GF}(t_i) + (t-t_i) p_{GF}(t_{i+1}) \rangle \\
 =&  J( u_{GF}(t_i)) + (t-t_i) \langle p_{GF}(t_{j}), p_{GF}(t_{i+1}) \rangle \\
 =&   \langle p_{GF}(t_{i+1}),u(t_i) \rangle + (t-t_i) \langle p_{GF}(t_{i+1}), p_{GF}(t_{i+1}) \rangle \\
 =&   \langle p_{GF}(t_{i+1}),u(t_i) + (t-t_i)p_{GF}(t_{i+1}) \rangle \\
 =&   \langle p_{GF}(t_{i+1}), u(t) \rangle \\
 =&   J(u(t)).
\end{align*}
Therefore, $p_{GF}(t_{j}) \in \partial J(u(t))$ for all $t \in [t_i, t_{i+1}]$ and all $j\leq i+1$. Integrating the scale space flow equation yields for $t \leq t_{i+1}$
$$ 0 = u_{GF}(t) - f  + \int_0^t p_{GF}(t) ~dt= u_{GF}(t) - f  + t \left( \frac{1}{t}\int_0^t p_{GF}(t) ~dt\right).$$
Due to the convexity of the subdifferential, we find $\frac{1}{t}\int_0^t p_{GF}(t) ~dt  \in \partial J(u_{GF}(t))$, which shows that $u_{GF}(t)$ meets the optimality condition for the variational method and concludes the proof.
\end{proof}

The above theorem not only allows us to conclude that $\phi_{VM} = \phi_{GF}$, but also $\phi_{VM}^i = \phi_{GF}^i$ for all $i$ in the light of equation \eqref{eq:finiteDecomposition}. Additionally, the first aspect of the above theorem allows another very interesting conclusion, revealing another striking similarity to linear spectral decompositions.
\begin{theorem}[Orthogonal Decompositions]\label{thm:orthogonality}
Let $J$ be such that (PS) and (MINSUB) are satisfied. Then equation \eqref{eq:finiteDecomposition} is an orthogonal decomposition of the input data $f$ for the variational method as well as for the gradient flow, i.e.,
$$ \langle \phi^i_*, \phi^j_* \rangle = 0, \qquad \forall i\neq j, \ * \in \{ VM,GF\} $$
\end{theorem}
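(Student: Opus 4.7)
The plan is to reduce the orthogonality statement to a short algebraic manipulation involving the minimal-norm subgradients $p_{GF}(t_k)$, using only Theorem \ref{thm:piecewiseDynamics}, Theorem \ref{thm:flowVariationalEquivalence}, and the (MINSUB) hypothesis. I would first treat the gradient-flow case, since the Conclusion after Theorem \ref{thm:piecewiseDynamics} gives the clean expression
\[
\phi^i_{GF} = t_i\bigl(p_{GF}(t_i) - p_{GF}(t_{i+1})\bigr),
\]
so for $i<j$ the claim reduces to $\langle p_{GF}(t_i) - p_{GF}(t_{i+1}),\, p_{GF}(t_j) - p_{GF}(t_{j+1})\rangle = 0$.

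The key identity I would establish as a lemma-style step is
\[
\langle p_{GF}(t_{k+1}),\, p_{GF}(t_l)\rangle \;=\; \|p_{GF}(t_{k+1})\|^2 \qquad \text{for every } l \leq k+1.
\]
Its proof combines two facts: (a) by part 1 of Theorem \ref{thm:flowVariationalEquivalence}, $p_{GF}(t_l) \in \partial J(u_{GF}(t_k))$ whenever $l \leq k$; and (b) $p_{GF}(t_{k+1})$ is itself the minimum-norm element of $\partial J(u_{GF}(t_k))$, as shown in \eqref{eq:minimizationGf} inside the proof of Theorem \ref{thm:piecewiseDynamics}. Then (MINSUB), applied at $u = u_{GF}(t_k)$, gives $\langle p_{GF}(t_{k+1}),\, p_{GF}(t_{k+1}) - p_{GF}(t_l)\rangle = 0$ for $l \leq k$, and the boundary case $l = k+1$ is trivial.

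Given this identity, I would simply expand the bilinear form for $i < j$ using the identity once with $k = j-1$ (valid for $l = i$ and $l = i+1$, since $i+1 \leq j$) and once with $k = j$ (valid for $l = i, i+1 \leq j+1$). All four cross-products then reduce to either $\|p_{GF}(t_j)\|^2$ or $\|p_{GF}(t_{j+1})\|^2$ and cancel in pairs, yielding orthogonality. The variational case follows without separate work: Theorem \ref{thm:flowVariationalEquivalence} gives $u_{VM} = u_{GF}$, so both are piecewise linear in $t$ with common slopes $-p_{GF}(t_{k+1})$ on $(t_k, t_{k+1})$; the jump of $\partial_t u_{VM}$ at $t_i$ is then $p_{GF}(t_i) - p_{GF}(t_{i+1})$, which shows $\phi^i_{VM} = \phi^i_{GF}$ and orthogonality transfers verbatim.

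The only subtlety is checking that the range $l \leq k+1$ in the key identity genuinely covers all four index pairs $(i,j), (i+1,j), (i,j+1), (i+1,j+1)$ arising from the expansion; this holds precisely because $i < j$ forces $i+1 \leq j$. No single step is difficult — the whole argument is a bookkeeping exercise showing that (MINSUB), combined with the monotone inclusion $p_{GF}(t_l) \in \partial J(u_{GF}(t_k))$ for $l \leq k$ from Theorem \ref{thm:flowVariationalEquivalence}, forces the successive increments $p_{GF}(t_i) - p_{GF}(t_{i+1})$ into mutually orthogonal directions.
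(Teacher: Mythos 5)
Your proposal is correct and follows essentially the same route as the paper: reduce to the gradient flow via the equivalence in Theorem \ref{thm:flowVariationalEquivalence}, write $\phi^i_{GF} = t_i\bigl(p_{GF}(t_i) - p_{GF}(t_{i+1})\bigr)$, and use (MINSUB) together with the inclusion $p_{GF}(t_l) \in \partial J(u_{GF}(t_k))$ for $l \leq k$ to show each cross term equals $\|p_{GF}(t_{k+1})\|^2$, so the four terms cancel. Your explicit index bookkeeping for the four pairs is a slightly more careful writing of exactly the cancellation the paper performs.
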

\begin{proof}
Due to the equivalence stated in Theorem \ref{thm:flowVariationalEquivalence} it is sufficient to consider the gradient flow only. For the gradient flow we have $\phi^i_{GF} = t_i (p_{GF}(t_{i}) - p_{GF}(t_{i+1}))$. Let $i>j$, then
\begin{align*}
\langle \phi^i_*, \phi^j_* \rangle =& t_i t_j \langle p_{GF}(t_{i+1}) - p_{GF}(t_{i}), p_{GF}(t_{j+1}) - p_{GF}(t_{j}) \rangle \\
=& t_i t_j \left(\langle p_{GF}(t_{i+1}), p_{GF}(t_{j+1}) - p_{GF}(t_{j}) \rangle  - \langle  p_{GF}(t_{i}), p_{GF}(t_{j+1}) - p_{GF}(t_{j}) \rangle \right).
\end{align*}
Now due to statement 1. in  Theorem \ref{thm:flowVariationalEquivalence}, we know that $p_{GF}(t_{j}) \in \partial J(u(t^i))$ and $p_{GF}(t_{j+1}) \in \partial J(u(t^i))$. Since $p_{GF}(t_{i})$ was determined according to equation \eqref{eq:minimizationGf}, we can use (MINSUB) to state that
\begin{align*}
\langle  p_{GF}(t_{i}), p_{GF}(t_{j+1}) \rangle = \|p_{GF}(t_{i})\|^2 =  \langle  p_{GF}(t_{i}),  p_{GF}(t_{j}) \rangle ,
\end{align*}
which means that $ \langle  p_{GF}(t_{i}), p_{GF}(t_{j+1}) - p_{GF}(t_{j}) \rangle = 0$. With exactly the same argument we find $\langle p_{GF}(t_{i+1}), p_{GF}(t_{j+1}) - p_{GF}(t_{j}) \rangle=0$ which yields the assertion.
\end{proof}
Due to the limited number of possibly orthogonal vectors in $\R^n$ we can conclude:
\begin{conclusion}\
For $J: \R^n \rightarrow \R$ meeting (PS) and (MINSUB) the number of possible times at which the piecewise behavior of the variational and scale space method changes is at most $n$.
\end{conclusion}

As announced before we also obtain the equivalence of definitions of the power spectrum:
\begin{proposition} \label{prop:equivalencespectra}
Let $J$ be such that (PS) and (MINSUB) are satisfied. Then for the spectral representations of the gradient flow and the variational method we have $S^2(t)=S_2^2(t)$ for almost every $t$ (and $S_2^2$ being defined via \eqref{eq:S2}).
\end{proposition}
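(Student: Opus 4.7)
The plan is to reduce both candidate spectra to weighted sums of delta measures concentrated at the same finite set of breakpoints $\{t_i\}$ produced by Theorem \ref{thm:piecewiseDynamics}, and then match the coefficients using the (MINSUB) identity together with the orthogonality from Theorem \ref{thm:orthogonality}. By Theorem \ref{thm:flowVariationalEquivalence} it suffices to treat the gradient flow case.

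First I would use the Conclusion following Theorem \ref{thm:piecewiseDynamics} to write $\phi_{GF}(t) = \sum_i \phi_{GF}^i \, \delta(t-t_i)$ with $\phi_{GF}^i = t_i (p_{GF}(t_i) - p_{GF}(t_{i+1}))$ and $f = P_0(f) + \sum_i \phi_{GF}^i$. Since each $\phi_{GF}^i$ is a difference of elements of $\partial J(0) \subset \mathcal{N}(J)^\bot$ (Lemma \ref{subgradientlemma3}), $\langle \phi_{GF}^i, P_0(f)\rangle = 0$, and Theorem \ref{thm:orthogonality} gives $\langle \phi_{GF}^i, \phi_{GF}^j\rangle = 0$ for $i \neq j$. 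Therefore $\langle \phi_{GF}^i, f\rangle = \|\phi_{GF}^i\|^2$, and the orthogonal-decomposition spectrum is
$$S^2(t) = \Phi_t \cdot f = \sum_i \|\phi_{GF}^i\|^2 \, \delta(t-t_i).$$

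Second I would compute $S_2^2$ directly from the representation $t^2 \frac{d^2}{dt^2} J(u_{GF}(t))$. On $[t_i,t_{i+1}]$, Theorem \ref{thm:piecewiseDynamics} yields $u_{GF}(t) = u_{GF}(t_i) - (t-t_i)p_{GF}(t_{i+1})$ with $p_{GF}(t_{i+1}) \in \partial J(u_{GF}(t))$, hence
$$J(u_{GF}(t)) = \langle p_{GF}(t_{i+1}), u_{GF}(t)\rangle = J(u_{GF}(t_i)) - (t-t_i)\|p_{GF}(t_{i+1})\|^2.$$
The map $t \mapsto J(u_{GF}(t))$ is therefore piecewise affine, and its distributional second derivative is a sum of delta peaks at the $t_i$ whose weights are the jumps of the first derivative, yielding
$$S_2^2(t) = \sum_i t_i^2\bigl(\|p_{GF}(t_i)\|^2 - \|p_{GF}(t_{i+1})\|^2\bigr)\, \delta(t-t_i).$$

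The heart of the proof is then the algebraic identity matching the coefficients. Since $p_{GF}(t_{i+1})$ is the minimum-norm element of $\partial J(u_{GF}(t_i))$ and, by statement 1 of Theorem \ref{thm:flowVariationalEquivalence}, $p_{GF}(t_i) \in \partial J(u_{GF}(t_i))$ as well, (MINSUB) gives $\langle p_{GF}(t_{i+1}),\, p_{GF}(t_{i+1}) - p_{GF}(t_i)\rangle = 0$, i.e. $\langle p_{GF}(t_{i+1}), p_{GF}(t_i)\rangle = \|p_{GF}(t_{i+1})\|^2$. Expanding the square gives $\|p_{GF}(t_i) - p_{GF}(t_{i+1})\|^2 = \|p_{GF}(t_i)\|^2 - \|p_{GF}(t_{i+1})\|^2$, so $\|\phi_{GF}^i\|^2 = t_i^2(\|p_{GF}(t_i)\|^2 - \|p_{GF}(t_{i+1})\|^2)$, which is precisely the delta-weight appearing in $S_2^2$ at $t = t_i$. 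Hence $S^2 = S_2^2$ as Radon measures.

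The main conceptual obstacle is that under (PS) both quantities are concentrated measures rather than pointwise functions, so "almost every $t$" has to be read as equality of the associated Radon measures (equivalently, agreement of the coefficients of each $\delta(t-t_i)$); the only non-routine computation is the (MINSUB) identity above, which requires both that $p_{GF}(t_i)$ lies in the same subdifferential as $p_{GF}(t_{i+1})$ (supplied by Theorem \ref{thm:flowVariationalEquivalence}) and that $p_{GF}(t_{i+1})$ is the minimum-norm element there. The variational-method case is immediate because Theorem \ref{thm:flowVariationalEquivalence} ensures $u_{VM} = u_{GF}$ and $\phi_{VM}^i = \phi_{GF}^i$, so the same pair of measures represents its spectra.
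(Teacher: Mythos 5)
Your proof is correct and follows essentially the same route as the paper's: reduce to the gradient flow via Theorem \ref{thm:flowVariationalEquivalence}, use orthogonality of the $\phi_{GF}^i$ to get $S^2(t)=\sum_i\Vert\phi_{GF}^i\Vert^2\delta(t-t_i)$, and apply (MINSUB) in the form $\langle p_{GF}(t_{i+1}),p_{GF}(t_i)\rangle=\Vert p_{GF}(t_{i+1})\Vert^2$ to match the coefficients with those of $S_2^2$. Your treatment is in fact slightly more careful than the paper's in spelling out the orthogonality to $P_0(f)$, the explicit piecewise-affine computation of $\frac{d^2}{dt^2}J(u_{GF}(t))$, and the reading of ``almost every $t$'' as equality of Radon measures.
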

\begin{proof}
Due to the equivalence of representations it suffices to prove the result for the gradient flow case.
We have due to the orthogonality of the $\phi_{GF}^i$
\begin{eqnarray*}
S^2(t) &=& \sum_{i=0}^ {N_*} (\phi_{GF}^i \cdot f) \delta (t-t_i) = \sum_{i=0}^ {N_*} \Vert \phi_{GF}^i \Vert^2 \delta (t-t_i) \\
  &=&  \sum_{i=0}^ {N_*}   t_i^2 (p_{GF}(t_{i}) - p_{GF}(t_{i+1}))\cdot (p_{GF}(t_{i}) - p_{GF}(t_{i+1}))\delta (t-t_i).
\end{eqnarray*}
With (MINSUB) we conclude
$$ p_{GF}(t_{i}) \cdot   p_{GF}(t_{i+1}) = \Vert p_{GF}(t_{i+1}) \Vert^2. $$
Inserting this relation we have
$$ S^2(t) = \sum_{i=0}^ {N_*}   t_i^2(\Vert p_{GF}(t_{i}) \Vert^2-\Vert p_{GF}(t_{i+1}) \Vert^2) \delta(t-t_i) = t^2 \frac{d}{dt} \Vert p(t) \Vert^2 = S_2^2(t). $$
\end{proof}

Due to the importance of (MINSUB) based on Theorems \ref{thm:flowVariationalEquivalence} and \ref{thm:orthogonality}, the next natural question is if we can give a class of regularization functionals that meet (MINSUB).

\begin{theorem}\
\label{thm:minsubHolds}
Let (DDL1) be met (Def. \ref{def:DDL1}). Then $J$ meets (PS) and (MINSUB).
\end{theorem}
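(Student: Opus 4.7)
The plan is to reduce both claims to facts about the constrained quadratic minimization that defines the minimum-norm subgradient, exploiting the composition structure $J = \|\cdot\|_1 \circ K$ and the chain rule for subdifferentials.

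For (PS), I would first invoke the subdifferential chain rule (which applies since $\|\cdot\|_1$ is everywhere finite) to write $\partial J(0) = K^*\partial\|\cdot\|_1(0) = K^*[-1,1]^m$. Since the cube $[-1,1]^m$ is the convex hull of its $2^m$ vertices $\{\pm 1\}^m$, the image $K^*[-1,1]^m$ is the convex hull of the finite set $\{K^*v : v\in\{\pm 1\}^m\}$, which verifies (PS).

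For (MINSUB), fix $u\in\mathbb{R}^n$ and write $S=\{i:(Ku)_i\neq 0\}$ with $s_i = \text{sign}((Ku)_i)$ on $S$. A second application of the chain rule identifies
\[
\partial J(u) = \{K^*z \,:\, z_i = s_i \text{ on } S,\ z_i\in[-1,1] \text{ on } S^c\}.
\]
Writing $M = KK^*$, the minimum-norm subgradient $\hat p = K^*\hat z$ is then determined by minimizing $z^\top M z$ over this box. I would write down the KKT conditions on $S^c$: in the interior case $|\hat z_i|<1$ we immediately get $(M\hat z)_i = 0$, while on the boundary $\hat z_i = \pm 1$ we only get a one-sided inequality on $(M\hat z)_i$.

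The key step, and the main obstacle, is to strengthen those boundary inequalities to equalities using the diagonal dominance of $M$. The observation is that if $\hat z_i = 1$, then
\[
(M\hat z)_i = M_{ii} + \sum_{j\neq i} M_{ij}\hat z_j \;\geq\; M_{ii} - \sum_{j\neq i}|M_{ij}| \;\geq\; 0,
\]
where the first inequality uses $|\hat z_j|\leq 1$ and the second is diagonal dominance (with $M_{ii}\geq 0$ automatic from $M\succeq 0$). Combined with the KKT constraint $(M\hat z)_i \leq 0$ for the active upper bound, this forces $(M\hat z)_i = 0$. The case $\hat z_i = -1$ is entirely analogous. Hence $(M\hat z)_i = 0$ on all of $S^c$.

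With this in hand, for any $q = K^*z\in\partial J(u)$ the difference $\hat z - z$ vanishes on $S$ (where both coordinates are clamped to $s_i$), so
\[
\langle \hat p, \hat p - q\rangle = \hat z^\top M(\hat z - z) = \sum_{i\in S^c}(M\hat z)_i(\hat z_i - z_i) = 0,
\]
which is exactly (MINSUB). Combined with the verification of (PS) in the first paragraph, this completes the proof.
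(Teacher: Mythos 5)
Your proof is correct and follows essentially the same route as the paper's: both reduce the min-norm subgradient computation to a box-constrained quadratic program in the dual variable $z$ (via $\partial J(u) = K^*\{z : z = \mathrm{sign}(Ku) \text{ on } S,\ |z|\le 1 \text{ on } S^c\}$), use the KKT conditions, and invoke diagonal dominance of $KK^*$ to force $(KK^*\hat z)_i = 0$ off the support of $Ku$, from which (MINSUB) follows since $\hat z - z$ vanishes on the support. The (PS) part likewise matches the paper's observation that $\partial J(0) = K^*\partial\|0\|_1$ is the convex hull of the finite set $\{K^*v : v \in \{\pm 1\}^m\}$.
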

\begin{proof}
First of all, for any $p\in \partial J(u)$ we know from the characterization of the subdifferential \eqref{eq:subdiffCharak} of the $\ell^1$ norm that $p=K^Tq$ with
\begin{align}
q(l) \left\{ \begin{array}{lc} = 1 & \text{ if } Ku(l) >0, \\ =-1 & \text{ if } Ku(l) <0, \\ \in [-1,1] & \text{ if } Ku(l) =0   .
\end{array} \right.
\end{align}
Thus for any given $\hat{u}$ and $\hat{p}$ defined as
\begin{align}
\label{eq:normMinimizing}
 \hat{p} = \arg \min_p \|p\|^2 \ \text{s.t. } p \in \partial J(\hat{u})
\end{align}
we have the optimality condition
\begin{align}
\label{eq:optiCond}
KK^T \hat{q} + \lambda = 0,
\end{align}
for some Lagrange multiplier $\lambda$ to enforce the constraints. For a given $u$ denote
\begin{align}
 I_u = \{l ~|~ Ku(l) \neq 0\}.
\end{align}
For better readability of the proof, let us state an additional lemma:
\begin{lemma}
\label{lem:lagrangeZero}
Let $J(u)=\|Ku\|_1$ for a linear operator $K$ and let $\hat{p}$ be defined by \eqref{eq:normMinimizing} for some arbitrary element $u$. If the $\lambda$ arising from \eqref{eq:optiCond} meets $\lambda(l)=0 \ \forall l \notin I_u$ then (MINSUB) holds.
\end{lemma}
\begin{proof}
Let $z \in \partial J(u)$ be some other subgradient. Based on \eqref{eq:subdiffCharak} there is a $q_z$ such that $z=K^T q_z$. Now
\begin{align*}
\langle \hat{p}, \hat{p} - z \rangle =& \langle K K^T \hat{q}, \hat{q} - q_z \rangle
=\sum_l (KK^T\hat{q})(l) \cdot (\hat{q}(l) - q_z(l)) \\
=& \sum_{l \in I}(KK^T\hat{q})(l) \cdot  (\hat{q}(l) - q_z(l)) + \sum_{l \notin I} (KK^T\hat{q})(l) \cdot  (\hat{q}(l) - q_z(l)) \\
=& \sum_{l \notin I} (KK^T\hat{q})(l) \cdot (\hat{q}(l) - q_z(l))
= - \sum_{l \notin I} \lambda(l) \cdot (\hat{q}(l) - q_z(l))
=0
\end{align*}
\end{proof}

Consider $\hat{p}$ defined by \eqref{eq:normMinimizing} for some arbitrary element $u$. According to Lemma \ref{lem:lagrangeZero} it is sufficient to show that the Lagrange multiplier $\lambda$ in \eqref{eq:optiCond} meets $\lambda(l)=0 \ \forall l \notin I_u$. Assume $\lambda(l) > 0$ for some $l \notin I$. The complementary slackness condition then tells us that $\hat{q}(l) = 1$. Therefore
\begin{align*}
-\lambda(l) =& (K^TK \hat{q})(l),
 = \sum_j (KK^T)(l,j) \hat{q}(j), \\
=& (KK^T)(l,l)  \hat{q}(l) + \sum_{j\neq l } (KK^T)(l,j) \hat{q}(j),
= (KK^T)(l,l)  + \sum_{j\neq l } (KK^T)(l,j)\hat{q}(j) \\
\geq& (KK^T)(l,l)  - \sum_{j\neq l } |(KK^T)(l,j)| \stackrel{KK^T \text{ diag. dom.}}{\geq} 0 ,
\end{align*}
which is a contradiction to $\lambda(l)>0$. A similar computation shows that $\lambda(l)<0$ (which implies $\hat{q}(l)=-1$) is not possible either.
\end{proof}

By  Theorem \ref{thm:flowVariationalEquivalence} the above result implies that all (DDL1) regularizations lead to the equivalence of the spectral representations obtained by the variational and the scale space method. Interestingly, the class of (DDL1) functionals also allows to show the equivalence of the third possible definition of a spectral representation.
\begin{theorem}[Equivalence of $GF$, $VM$, and $IF$]
Let (DDL1) be met. Denote $\tau = \frac{1}{t}$, $v(\tau) = u_{GF}(1/\tau) = u_{GF}(t)$, and $r(\tau) = p_{VM}(1/\tau) = p_{VM}(t)$. It holds that
\begin{align}
\partial_\tau r(\tau ) =& f - \partial_{\tau} \left(\tau \: v(\tau)\right), \ \ r(\tau) \in \partial J\left( \partial_{\tau} \left(\tau \: v(\tau)\right)\right),
\end{align}
in other words, $( r(\tau),  \partial_{\tau} \left(\tau \: v(\tau)\right))$ solve the inverse scale space flow \eqref{eq:inverseScaleSpace}.
\label{thm:issRofSubgradienceEquivalence}
\end{theorem}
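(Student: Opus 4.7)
My plan is to reduce everything to the equivalence of GF and VM established in Theorem \ref{thm:flowVariationalEquivalence}, which is applicable here because (DDL1) implies (PS) and (MINSUB) via Theorem \ref{thm:minsubHolds}. In particular, $u_{GF}(t)=u_{VM}(t)$ satisfies the VM optimality identity $u_{VM}(t)+t\,p_{VM}(t)=f$ and the averaging formula $p_{VM}(t)=\frac{1}{t}\int_0^t p_{GF}(s)\,ds$. The evolution equation $\partial_\tau r(\tau)=f-\partial_\tau(\tau v(\tau))$ then falls out instantly: rewriting the optimality condition in the $\tau$-variable as $\tau v(\tau)+r(\tau)=\tau f$ and differentiating in $\tau$ gives the first claim, with no further input needed. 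Since $p_{VM}(t)\to 0$ after extinction (the optimality relation forces $p_{VM}(t)=Q_0 f/t$ for $t\ge T$), we automatically get $r(0)=0$, matching the ISS initialization.

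The heart of the proof is the subgradient inclusion $r(\tau)\in\partial J(\partial_\tau(\tau v(\tau)))$. A direct chain-rule computation yields $\partial_\tau(\tau v(\tau))=u_{GF}(t)+t\,p_{GF}(t)$; using the piecewise-affine dynamics of Theorem \ref{thm:piecewiseDynamics} (on $[t_i,t_{i+1}]$, $u_{GF}$ is affine and $p_{GF}$ is constantly $p_{GF}(t_{i+1})$), this collapses to the piecewise-constant vector $A_i := u_{GF}(t_i)+t_i p_{GF}(t_{i+1})$. So the task reduces to verifying $p_{VM}(t)\in\partial J(A_i)$ for every $t\in[t_i,t_{i+1}]$. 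I plan to do this in two moves. First, prove the auxiliary inclusion $p_{GF}(t_{i+1})\in\partial J(A_i)$: since $A_i=u_{GF}(t_{i+1})+t_{i+1}p_{GF}(t_{i+1})$ is a non-negative combination and, for absolutely one-homogeneous $J$, one has $\partial J(u)\cap\partial J(v)\subseteq\partial J(\alpha u+\beta v)$ for $\alpha,\beta\ge 0$ (an easy consequence of Lemma \ref{subgradientlemma1} plus the triangle inequality), it suffices to show the self-inclusion $p_{GF}(t_{i+1})\in\partial J(p_{GF}(t_{i+1}))$, i.e. $J(p_{GF}(t_{i+1}))=\|p_{GF}(t_{i+1})\|^2$. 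Second, combine (MINSUB) with Theorem \ref{thm:flowVariationalEquivalence}(1), which gives $p_{GF}(t_{j+1})\in\partial J(u_{GF}(t_i))$ for $j+1\le i$, to obtain $\langle p_{GF}(t_{j+1}),p_{GF}(t_{i+1})\rangle=\|p_{GF}(t_{i+1})\|^2$ for all $j\le i-1$. Plugging this into the averaging formula collapses the mean to $\langle p_{VM}(t),p_{GF}(t_{i+1})\rangle=\|p_{GF}(t_{i+1})\|^2$, so $\langle p_{VM}(t),A_i\rangle=J(u_{GF}(t))+t\|p_{GF}(t_{i+1})\|^2=\langle p_{GF}(t_{i+1}),A_i\rangle=J(A_i)$. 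Together with $p_{VM}(t)\in\partial J(0)$ from Lemma \ref{subgradientlemma2}, this closes the inclusion.

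The step I expect to be the main obstacle is the self-inclusion $p_{GF}(t_{i+1})\in\partial J(p_{GF}(t_{i+1}))$, which is precisely where diagonal dominance enters essentially. To carry it out I would reuse the structure from the proof of Theorem \ref{thm:minsubHolds}: write $p_{GF}(t_{i+1})=K^T\hat{q}$ for the minimum-norm dual variable, exploit that the Lagrange multipliers vanish off $I_{u_{GF}(t_i)}$ (so $(KK^T\hat{q})_l=0$ there), and use diagonal dominance to force $(KK^T\hat{q})_l$ to carry the same sign as $\hat{q}_l\in\{-1,+1\}$ on $I_{u_{GF}(t_i)}$. A short component-wise computation then gives $J(K^T\hat{q})=\|KK^T\hat{q}\|_1=\hat{q}^T KK^T\hat{q}=\|K^T\hat{q}\|^2$. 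Without diagonal dominance neither the sign alignment nor the vanishing of the multipliers off $I_{u_{GF}(t_i)}$ is guaranteed, which is the structural reason the theorem is stated under (DDL1) rather than merely (PS)+(MINSUB).
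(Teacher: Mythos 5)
Your proposal is correct and follows essentially the same route as the paper: reduce to the GF--VM equivalence, differentiate the rescaled optimality condition $\tau v(\tau)+r(\tau)=\tau f$, compute $\partial_\tau(\tau v(\tau))=u_{GF}(t_i)+t_i p_{GF}(t_{i+1})$ via the piecewise dynamics, and close the subgradient inclusion using (MINSUB), the averaging formula \eqref{eq:rhoFormula}, and the (DDL1) identity $\|p_{GF}(t_{i+1})\|^2=J(p_{GF}(t_{i+1}))$ (which is exactly Theorem \ref{thm:eigenfunctions}, so you need not re-derive it). The only cosmetic difference is that you verify $\langle p_{VM}(t),A_i\rangle=J(A_i)$ by first showing $p_{GF}(t_{i+1})\in\partial J(A_i)$ via the intersection-of-subdifferentials observation, whereas the paper gets the same equality from the triangle inequality $J(A_i)\leq J(u_{GF}(t_i))+t_i J(p_{GF}(t_{i+1}))$.
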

\begin{proof}
First of all note that
$$ \tau(u_{VM}(\tau) - f) + p_{VM}(\tau) = 0$$
holds as well as $u_{VM}(\tau)=u_{GF}(\tau)$ based on Theorem \ref{thm:flowVariationalEquivalence}. Differentiating the above equality yields
$$ \partial_\tau r(\tau) = f - \partial_\tau(\tau v(\tau)).$$
We still need to show the subgradient inclusion. It holds due to the piecewise linearity of the flow that
\begin{align}
\partial_{\tau}  \left(\tau \: v(\tau)\right) =& v(\tau) + \tau \: \partial_{\tau} v(\tau)
= u_{GF}(t) + \frac{1}{t} \partial_{\tau} u_{GF}(t(\tau))  \nonumber \\
=& u_{GF}(t) - t \partial_{t} u_{GF}(t)
= u_{GF}(t) + t p_{GF}(t)  \nonumber \\
=& u_{GF}(t_i) - (t-t_i)p_{GF}(t_{i+1}) + t p_{GF}(t_{i+1})  \nonumber \\
=& u_{GF}(t_i) + t_i p_{GF}(t_{i+1}).
\end{align}
Thus, we can continue computing
\begin{align}
\langle r(\tau) , \partial_{\tau} \left(\tau \: v(\tau)\right) \rangle =& \langle p_{VM}(t) ,  u_i + t_i p_{GF}(t_{i+1}) \rangle, \nonumber \\
=& J(u_i) + t_i \langle p_{VM}(t),p_{GF}(t_{i+1}) \rangle.
\label{eq:helper1}
\end{align}
 Due to \eqref{eq:rhoFormula} and the piecewise constant $p_{GF}(t)$ we have
$$  p_{VM}(t) = \frac{1}{t} \left(\sum_{j=1}^{i-1} (t_{j+1}-t_j) p_{GF}(t_{j+1}) + (t-t_i) p_{GF}(t_{i+1})\right). $$
Using the above formula for $p_{VM}(t)$ we can use the (MINSUB) condition to obtain
\begin{align*}
 t_i \langle p_{VM}(t), p_{GF}(t_{i+1}) \rangle
 =& \frac{t_i}{t} \left \langle \left(\sum_{j=1}^{i-1} (t_{j+1}-t_j) p_{GF}(t_{j+1}) + (t-t_i) p_{GF}(t_{i+1})\right), p_{GF}(t_{i+1}) \right\rangle,\\
 =&  \frac{t_i}{t}\left(\sum_{j=1}^{i-1} (t_{j+1}-t_j) \langle p_{GF}(t_{j+1}), p_{GF}(t_{i+1}) \rangle + (t-t_i) \|p_{GF}(t_{i+1})\|^2 \right), \\
  \stackrel{\text{(MINSUB)}}{=}& t_i \|p_{GF}(t_{i+1})\|^2   \stackrel{\text{(Theorem \ref{thm:eigenfunctions})}}{=} t_i J(p_{GF}(t_{i+1})) .
\end{align*}
By combining the above estimate with \eqref{eq:helper1} we obtain
\begin{align*}
 J(\partial_{\tau} \left(\tau \: v(\tau)\right)) =&  J( u(t_i) + t_i p_{GF}(t_{i+1})),  
{\leq} J(u(t_i)) + t_i J(p_{GF}(t_{i+1})),\\
  =& \langle r(\tau) , \partial_{\tau} \left(\tau \: v(\tau)\right) \rangle ,
\end{align*}
which yields $r(\tau) \in \partial J(\partial_\tau (\tau v(\tau)))$ and hence the assertion.
\end{proof}

\begin{conclusion}\label{conclu:sameSpectralRepresentation}
Let (DDL1) be met. Then $GF$, $VM$, and $IF$ all yield the same spectral representation.
\end{conclusion}
\begin{proof}
Theorem \ref{thm:minsubHolds} along with Theorem \ref{thm:flowVariationalEquivalence} show that $u_{GF}(t) = u_{VM}(t)$, which implies $\phi_{VM}(t)=\phi_{GF}(t)$. Theorem \ref{thm:issRofSubgradienceEquivalence} tells us that
\begin{align*}
v_{IS}(s) &=  \partial_{s} \left(s \: u_{GF}(1/s)\right) \\
&=  u_{GF}(1/s) - \frac{1}{s}\partial_t u_{GF}(1/s).
\end{align*}
Thus,
\begin{align*}
\tilde \phi_{IS}(s) &= \partial_s v_{IS}(s) = -\frac{1}{s^2 } \partial_t u_{GF}(1/s) - \left( -\frac{1}{s^2} \partial_t u_{GF}(1/s) - \frac{1}{s^3} \partial_{tt} u_{GF}(1/s)\right).\\
&=\frac{1}{s^3} \partial_{tt} u_{GF}(1/s)
\end{align*}
The relation $\phi_{IS}(t)=\frac{1}{t^2}\psi_{IS}(1/t)$ of \eqref{eq:representationConversion} now yields
$$ \phi_{IS}(t) = t \partial_{tt} u_{GF}(t) = \phi_{GF}(t).$$
\end{proof}

\subsection{Nonlinear Eigendecompositions}
As described in the introduction, the eigendecomposition, or, more generally, the singular value decomposition of a linear operator plays a crucial role in the classical filter analysis (cf. figure \ref{fig:classical}). Furthermore, we discussed that the notion of eigenvectors has been generalized to an element $v_\lambda$ with $\|v_\lambda\|_2=1$ such there exists a $\lambda$ with
$$ \lambda v_\lambda \in \partial J(v_\lambda). $$
The classical notion of singular vectors (up to a square root of $\lambda$) is recovered for a quadratic regularization functional $J$, i.e. $J(u) = \frac{1}{2}\|Ku\|_2^2$, in which case $\partial J(u) =\{K^*Ku\}$. We are particularly interested in the question in which case our generalized notion of a spectral decomposition admits the classical interpretation of filtering the coefficients of a (nonlinear) eigendecomposition of the input data $f$.

It is interesting to note that the use of eigendecompositions of linear operators goes beyond purely linear filtering: Popular nonlinear versions of the classical linear filters \eqref{eq:linearFilter} can be obtained by choosing filters adaptively to the magnitude of the coefficients in a new representation. For example, let $V$ be an orthonormal matrix (corresponding to a change of basis). For input data $f$ one defines $u_{\text{filtered}} = V \; D_{V^Tf} \; V^T f$, where $D_{V^Tf}$ is a data-dependent diagonal matrix, i.e. $\text{diag}(D_{V^Tf}) = g(V^Tf)$ for an appropriate function $g$. Examples for such choices include hard or soft thresholding of the coefficients. In \cite{spec_one_homog} we have shown that these types of soft- and hard thresholdings of representation coefficients can be recovered in our framework by choosing $J(u)=\|V^Tu\|_1$.

The matrix $V$ arising from the eigendecomposition of a linear operator is orthogonal, i.e. $VV^T=V^TV = Id$. The following theorem shows that significantly less restrictive conditions on the regularization, namely $J$ meeting (DDL1), already guarantee the decomposition of $f$ into a linear combination of generalized eigenvectors.

\begin{theorem}[Decomposition into eigenfunctions]
\label{thm:eigenfunctions}
Let (DDL1) be met. Then (up to normalization) the subgradients $p_{GF}(t_{i+1})$ of the gradient flow are eigenvectors of $J$, i.e. $p_{GF}(t_{i+1})  \in \partial J(p_{GF}(t_{i+1}))$. Hence,
\begin{align}
\label{eq:decomp_p}
f = P_0( f) + \sum_{i=0}^{N} (t_{i+1}-t_i) p_{GF}(t_{i+1}),
\end{align}
for $N$ being the index with $u(t_N)=P_0(f)$ is a decomposition of $f$ into eigenvectors of $J$.
\end{theorem}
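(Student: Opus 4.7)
The plan is to combine the structural information from Theorem \ref{thm:piecewiseDynamics} and Theorem \ref{thm:minsubHolds} with the explicit coordinate-wise analysis used to establish (MINSUB) under (DDL1). Since (DDL1) implies both (PS) and (MINSUB), Theorem \ref{thm:piecewiseDynamics} tells us that on each interval $[t_i, t_{i+1}]$ the subgradient $p_{GF}(t_{i+1})$ is precisely the minimum-norm element of $\partial J(u_{GF}(t_i))$. Using the characterization of $\partial J$ given in \eqref{eq:subdifferential}, the eigenvector property $p_{GF}(t_{i+1}) \in \partial J(p_{GF}(t_{i+1}))$ is equivalent to the two conditions
\begin{equation*}
p_{GF}(t_{i+1}) \in \partial J(0) \qquad \text{and} \qquad J(p_{GF}(t_{i+1})) = \|p_{GF}(t_{i+1})\|^2.
\end{equation*}
The first is immediate since $\partial J(u_{GF}(t_i)) \subset \partial J(0)$ by Lemma \ref{subgradientlemma2}. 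The real content of the theorem lies in the norm identity.

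To establish the norm identity I would write $p_{GF}(t_{i+1}) = K^T \hat{q}$, where $\hat{q}$ is the minimum-norm element of $\partial \|\cdot\|_1(K u_{GF}(t_i))$, and revisit the Lagrangian analysis used in the proof of Theorem \ref{thm:minsubHolds}. Let $I := \{\,l \mid (K u_{GF}(t_i))(l) \neq 0\,\}$. Two facts from that proof are decisive: (i) the Lagrange multiplier $\lambda$ in the optimality condition $KK^T \hat{q} + \lambda = 0$ satisfies $\lambda(l) = 0$ for $l \notin I$ under (DDL1), so $(KK^T \hat{q})(l) = 0$ there; and (ii) for $l \in I$ one has $\hat{q}(l) \in \{-1, +1\}$. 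Diagonal dominance of $KK^T$ then yields, for each $l \in I$,
\begin{equation*}
(KK^T\hat{q})(l)\,\hat{q}(l) = (KK^T)(l,l) + \sum_{j \neq l}(KK^T)(l,j)\,\hat{q}(j)\,\hat{q}(l) \geq (KK^T)(l,l) - \sum_{j \neq l}|(KK^T)(l,j)| \geq 0.
\end{equation*}
Since $|\hat{q}(l)|=1$, this shows $|(KK^T\hat{q})(l)| = (KK^T\hat{q})(l)\,\hat{q}(l)$ for $l \in I$, while both sides vanish for $l \notin I$ by (i). Summing yields
\begin{equation*}
J(p_{GF}(t_{i+1})) = \|KK^T\hat{q}\|_1 = \sum_l (KK^T\hat{q})(l)\,\hat{q}(l) = \langle K^T\hat{q},\, K^T\hat{q}\rangle = \|p_{GF}(t_{i+1})\|^2,
\end{equation*}
which is the desired identity and therefore the eigenvector property.

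The reconstruction formula \eqref{eq:decomp_p} then follows by telescoping the piecewise linear dynamics: on each interval $[t_i, t_{i+1}]$ Theorem \ref{thm:piecewiseDynamics} gives $u_{GF}(t_i) - u_{GF}(t_{i+1}) = (t_{i+1}-t_i)\,p_{GF}(t_{i+1})$, and by finite-time extinction (Proposition \ref{prop:finiteTimeExtinctionGF}) there exists an index $N$ with $u_{GF}(t_N) = P_0(f)$; summing these increments and using $u_{GF}(t_0) = f$ produces the claimed decomposition. The main obstacle is the norm identity $J(p_{GF}(t_{i+1})) = \|p_{GF}(t_{i+1})\|^2$: it truly needs diagonal dominance (not just (MINSUB)), because one must simultaneously control the support of $KK^T\hat{q}$ outside $I$ and the sign of $(KK^T\hat{q})(l)$ inside $I$.
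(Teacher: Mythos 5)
Your proposal is correct and follows essentially the same route as the paper: it reduces the eigenvector property to the identity $J(p_{GF}(t_{i+1})) = \|p_{GF}(t_{i+1})\|^2$ and establishes it by the same two ingredients from the proof of Theorem \ref{thm:minsubHolds}, namely the vanishing of $(KK^T\hat q)(l)$ off the support of $Ku_{GF}(t_i)$ and the sign control on the support via diagonal dominance. Your write-up is in fact slightly more explicit than the paper's (spelling out the reduction via Lemma \ref{subgradientlemma1} and the telescoping for \eqref{eq:decomp_p}), but the argument is the same.
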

\begin{proof}
For the sake of simplicity, denote $p_{i}:=p_{GF}(t_{i})$ and $u_{i}:=u_{GF}(t_{i})$. We already know that $p_{i+1}=K^T q_{i+1}$ for some $q_{i+1}$, $\|q_{i+1}\|_{\infty}\leq 1$, with $q_{i+1}(l) = \text{sign}(Ku_{i})(l)$ for $l \in I_{u_{i}}$.
\begin{align*}
\langle p_{i+1}, p_{i+1} \rangle =& \langle KK^T q_{i+1}, q_{i+1} \rangle, \\
=& \sum_{l \notin I_{u_{i+1}}} \underbrace{(KK^T q_{i+1})(l)}_{=0, \text{ Proof of \ref{thm:minsubHolds}}} \cdot q_{i+1}(l) + \sum_{l \in I_{u_{i+1}}} (KK^T q_{i+1})(l) \cdot q_{i+1}(l).
\end{align*}
For the second sum, we have
\begin{align*}
(KK^T q_{i+1})(l) =& (KK^T)(l,l)\cdot \text{sign}(Ku_{i}(l)) + \sum_{j\neq i} (KK^T)(l,j) \cdot q_{i+1}(j), \\
\stackrel{\text{Diag. dom. of $KK^T$}}{\Rightarrow} \text{sign}((KK^T q_{i+1})(l)) =& \text{sign}(Ku_{i}(l)) \ \text{ or } (KK^T q_{i+1})(l)=0.
\end{align*}
Thus, in any case, $l \in I_{u_{i}}$ and  $l \notin I_{u_{i}}$, we have $(KK^T q_{i+1})(l) \cdot q_{i+1}(l) = |(KK^T q_{i+1})(l)|$, such that
\begin{align*}
\langle p_{i+1}, p_{i+1} \rangle =& \langle KK^T q_{i+1}, q_{i+1} \rangle
= \sum_l |(K \underbrace{K^T q_{i+1}}_{=p_{i+1}})(l)|
= \|Kp_{i+1}\|_1,
\end{align*}
which completes the proof.
\end{proof}

Note that conclusion \ref{conclu:sameSpectralRepresentation} immediately generalizes this result from the spectral representation arising from the gradient flow to all three possible spectral representations.

It is interesting to see that the subgradients of the gradient flow are (up to normalization) the eigenvectors $f$ is decomposed into. By the definition of $\phi_{GF}(t) = t\partial_{tt}u_{GF}(t) = -t \partial_t p_{GF}(t)$ and the mathematical definition of $\phi_{GF}(t)$ removing the derivative in front of $p_{GF}(t)$, we can see that any filtering approach in the (DDL1) case indeed simply modifies the coefficients of the representation in Theorem \ref{thm:eigenfunctions}, and therefore establishes a full analogy to linear filtering approaches.

Since any $f$ can be represented as a linear combination of generalized eigenvectors, a conclusion of Theorem \ref{thm:eigenfunctions} is the existence of a basis of eigenfunctions for any regularization meeting (DDL1). Note that there can (and in general will), however, be many more eigenfunctions than dimensions of the space, such that the nonlinear spectral decomposition methods cannot be written in a classical setting.

%

Let us now compare the two representations of $f$ given by Eq. \eqref{eq:decomp_p} and by our usual reconstruction Eq. \eqref{eq:recon_gf}, or -- since (DDL1) implies we have a polyhedral regularization -- the discrete reconstruction given by Eq. \eqref{eq:finiteDecomposition}.
For simplicity we assume $P_0(f)=0$. While one can see that rearranging Eq. \eqref{eq:finiteDecomposition} yields  Eq. \eqref{eq:decomp_p}, our decomposition of an image into its $\phi$ parts correspond to the \emph{change} of the eigenfunctions during the piecewise dynamics. While eigenfunctions of absolutely one-homogeneous regularizations are often highly correlated, their differences can be orthogonal as stated in Theorem \ref{thm:orthogonality} and therefore nicely separate different scales of the input data. Fig. \ref{fig:decomp} shows an example of a spectral decomposition of a piecewise constant input signal with respect to the total variation including the subgradients $p_{GF}(t^i)$ and the corresponding $\phi^i$.

\begin{figure}
\begin{center}
\begin{tabular}{cc}
\includegraphics[height=20mm]{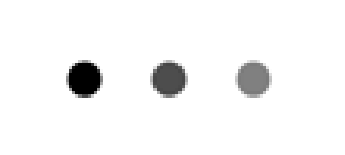}&
\includegraphics[height=30mm]{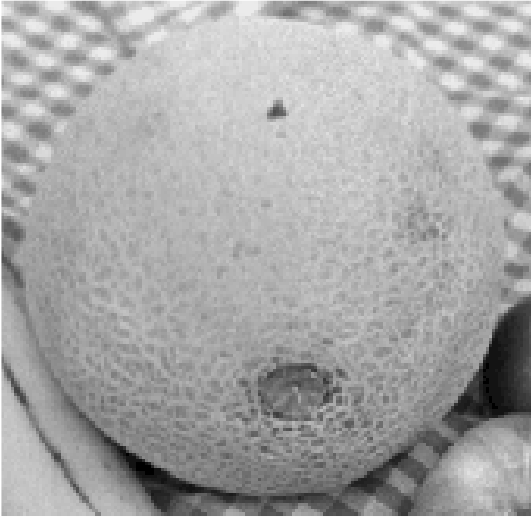}\\
\includegraphics[width=0.45\textwidth]{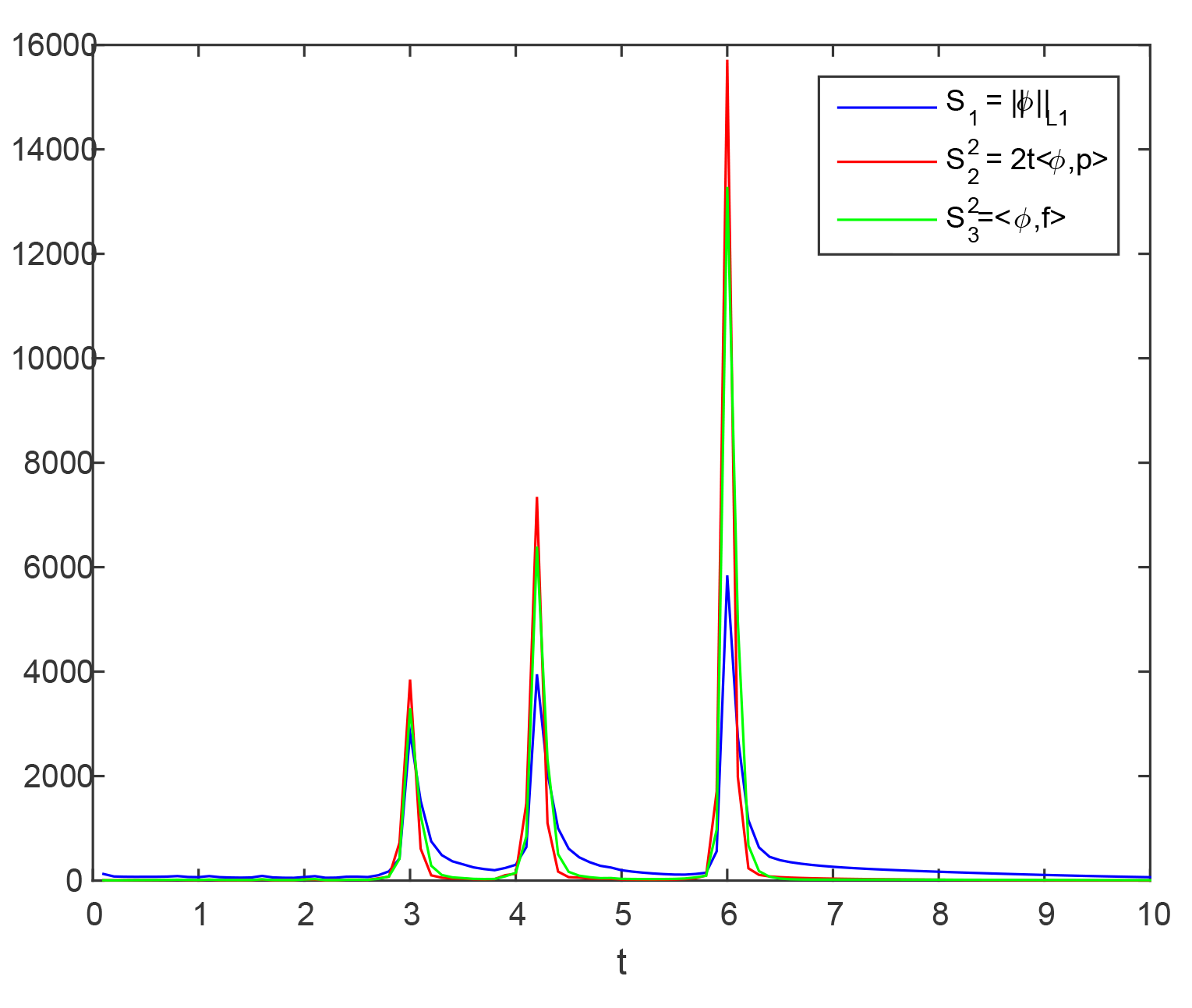}&
\includegraphics[width=0.45\textwidth]{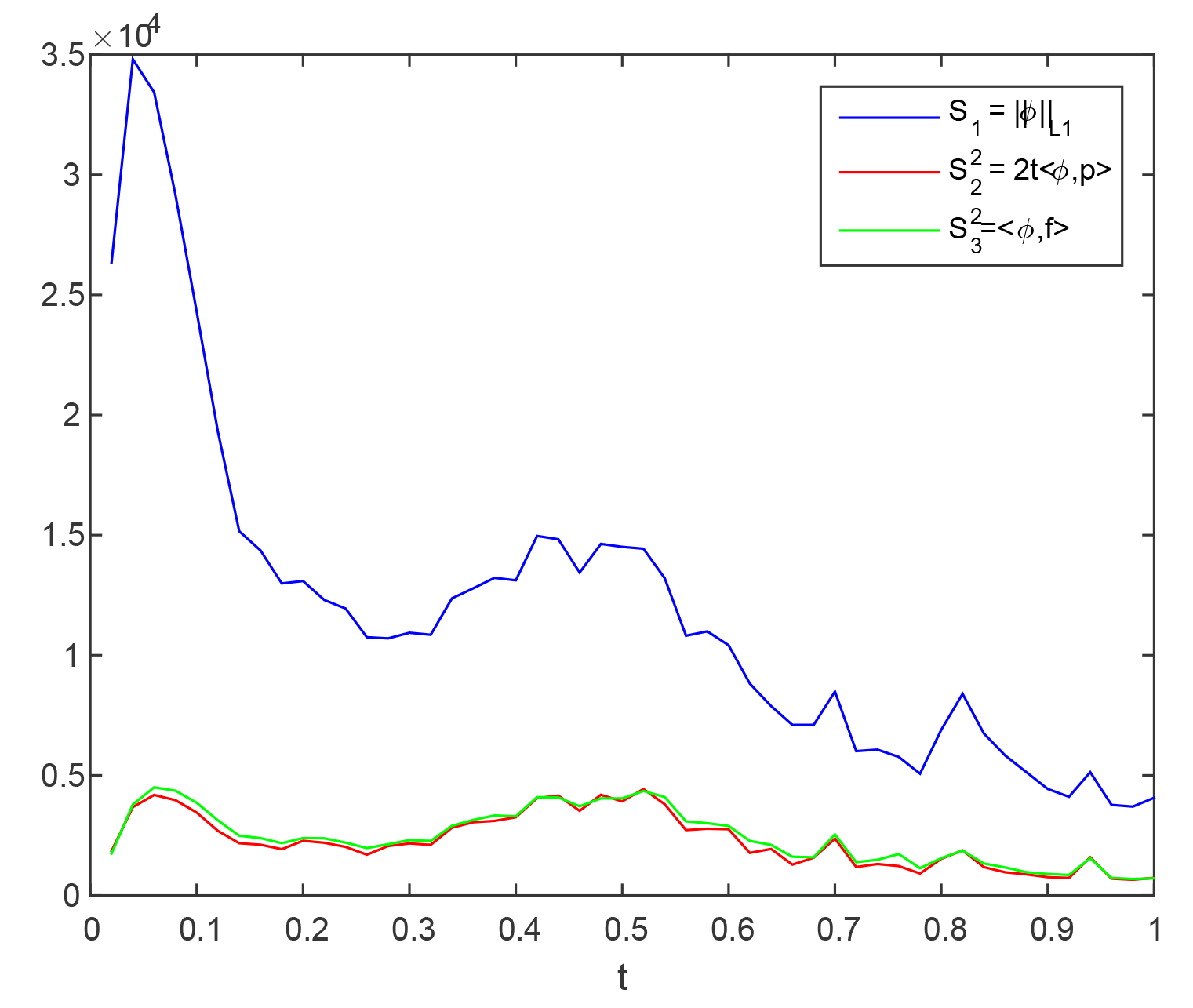}
\end{tabular}
\caption{Comparison of the response of the 3 spectral definitions $S_1$, $S_2$, $S_3$.
Left - response of 3 disks of different contrast, right - response for a natural image.}
\label{fig:spectrum}
\end{center}
\end{figure}

\begin{figure}
\begin{center}
	\tabcolsep0.3mm
	\begin{tabular}{ccc} 	
    $f$ & $S_3^2(t)$ & $S_1(t)$ \\
	\includegraphics[width=0.25\textwidth]{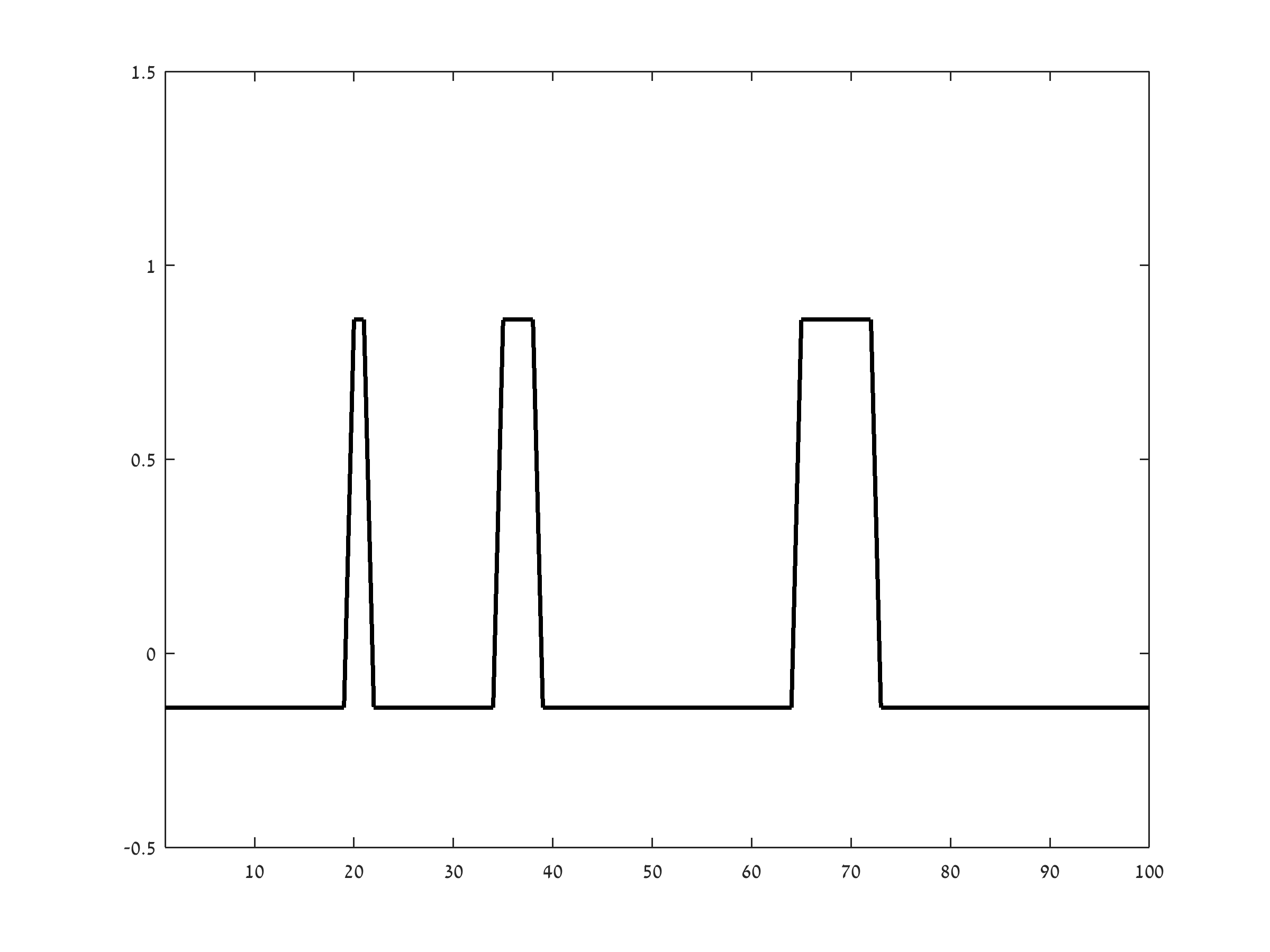} &
	\includegraphics[width=0.25\textwidth]{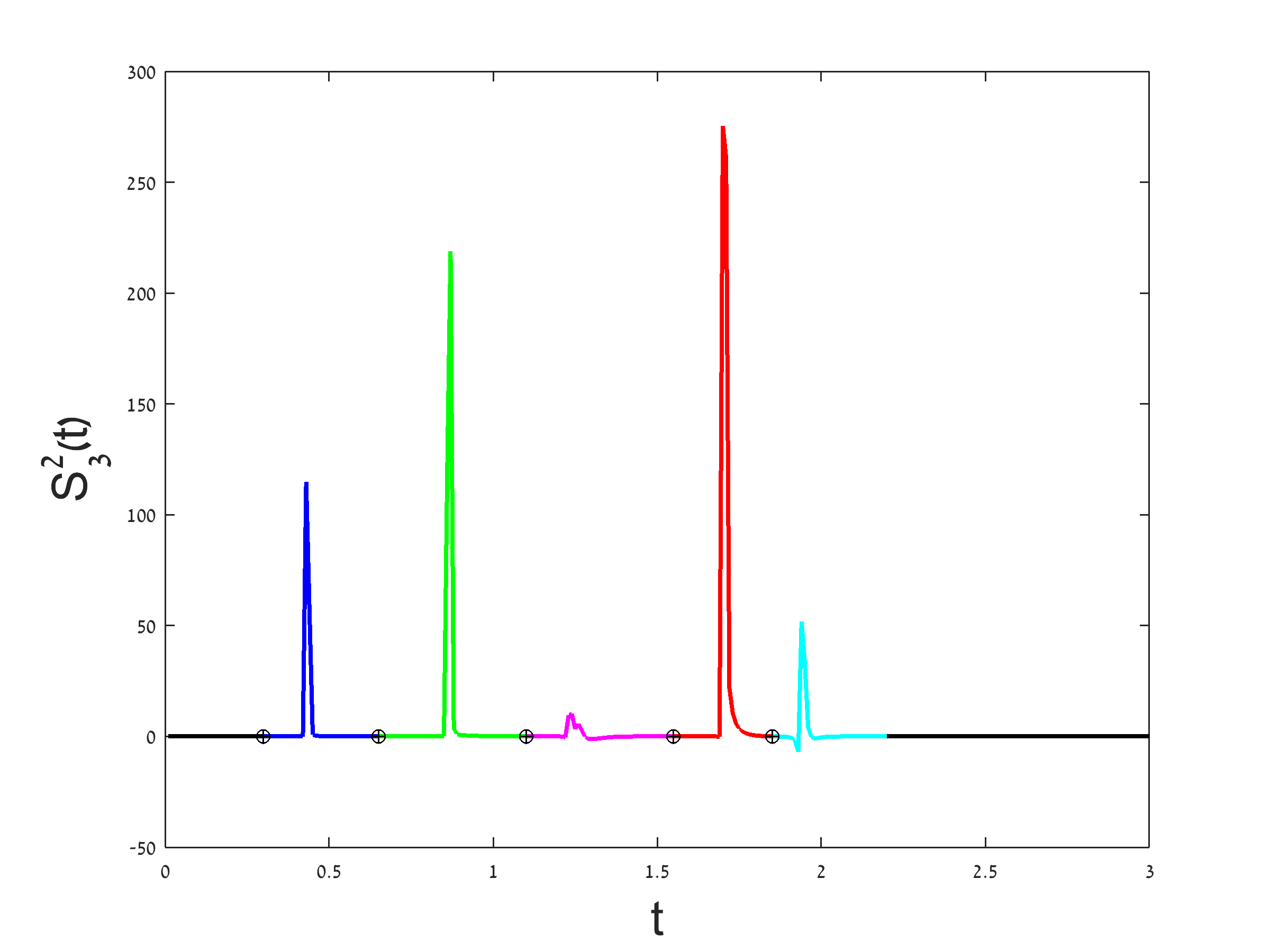} &
	\includegraphics[width=0.25\textwidth]{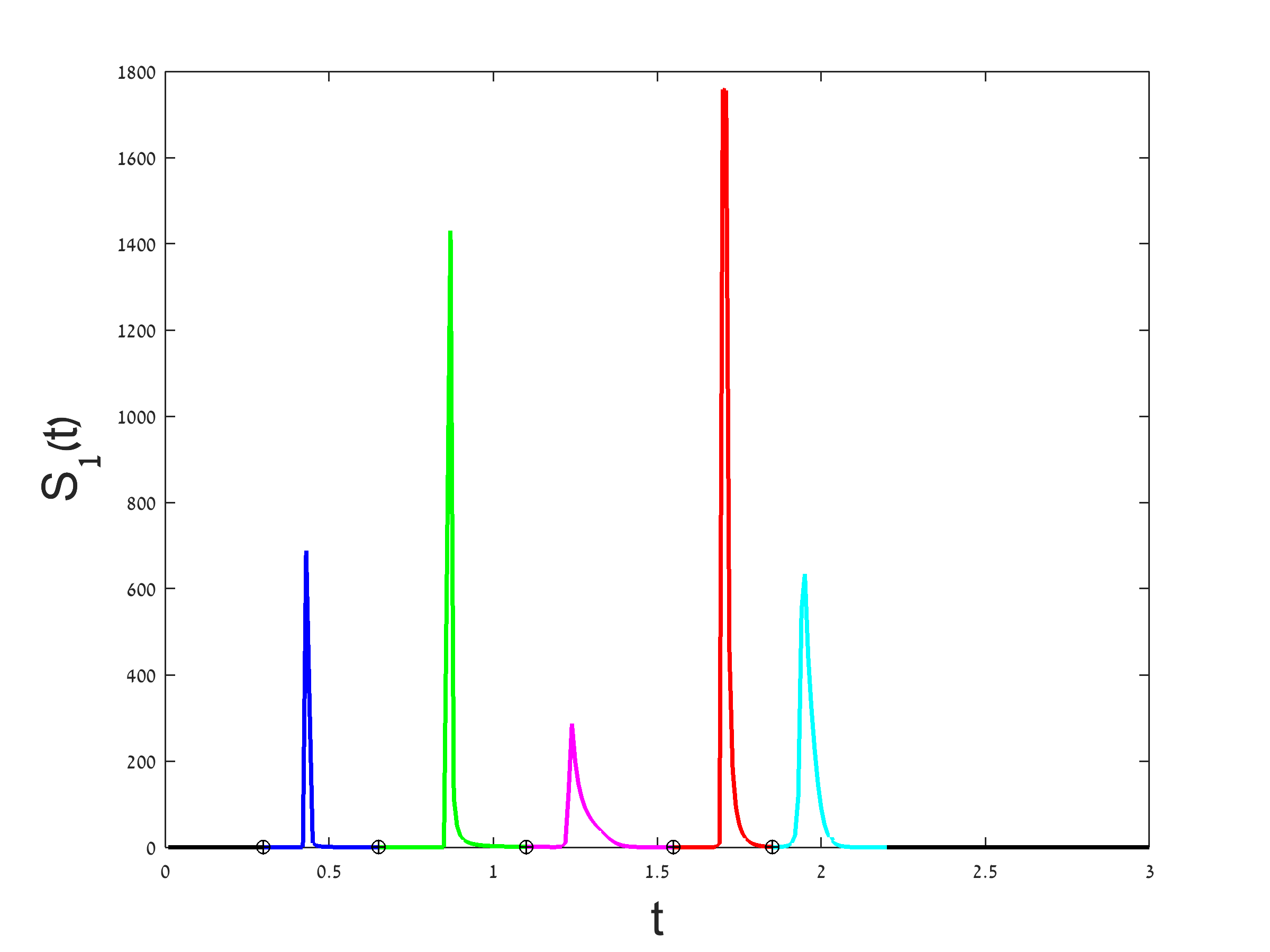} \\
    $u_{GF}(t_i)$ & $p_{GF}(t_i)$ & $\Phi(t_i)$ \\
	\includegraphics[width=0.25\textwidth]{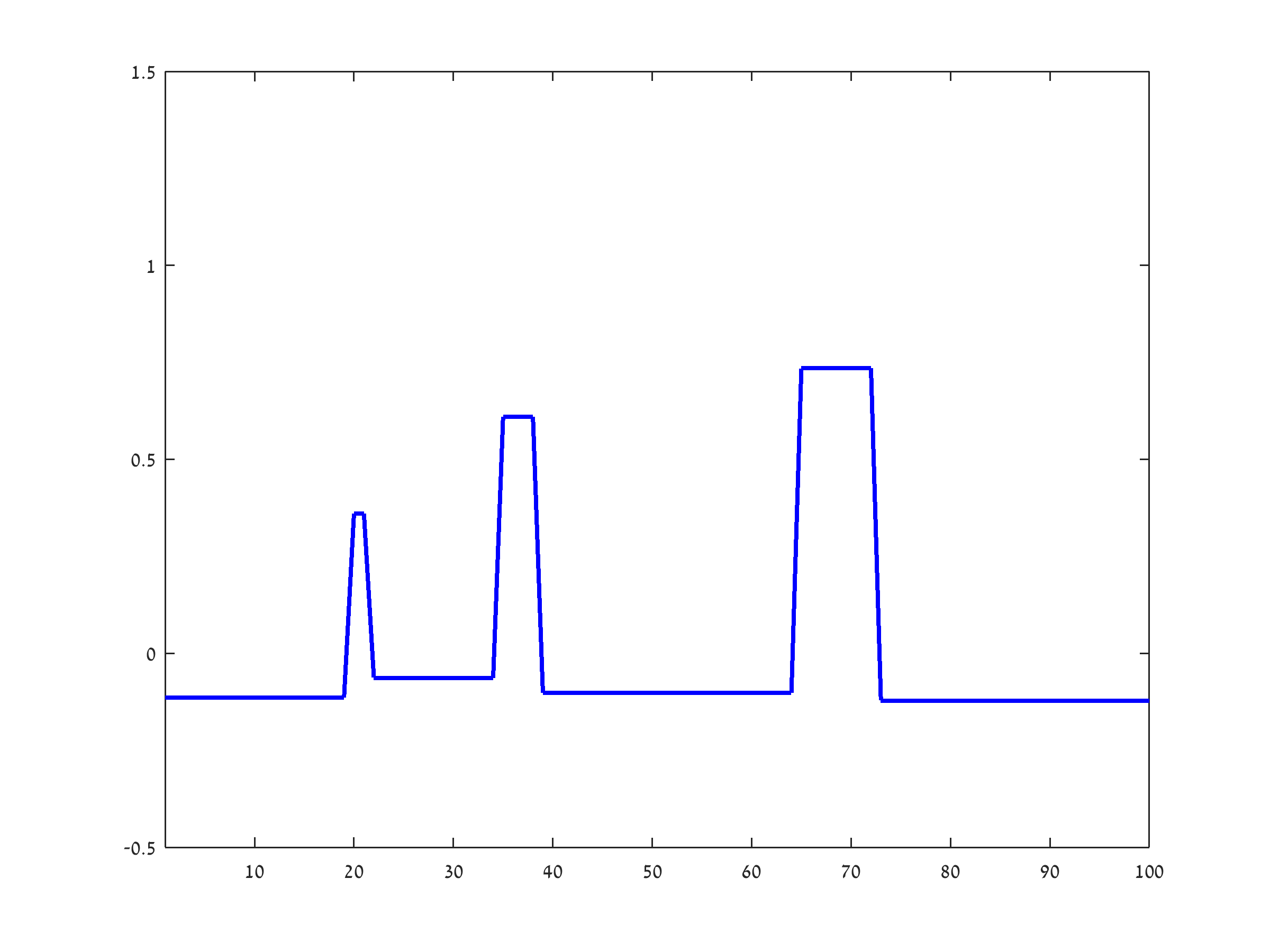} &
    \includegraphics[width=0.25\textwidth]{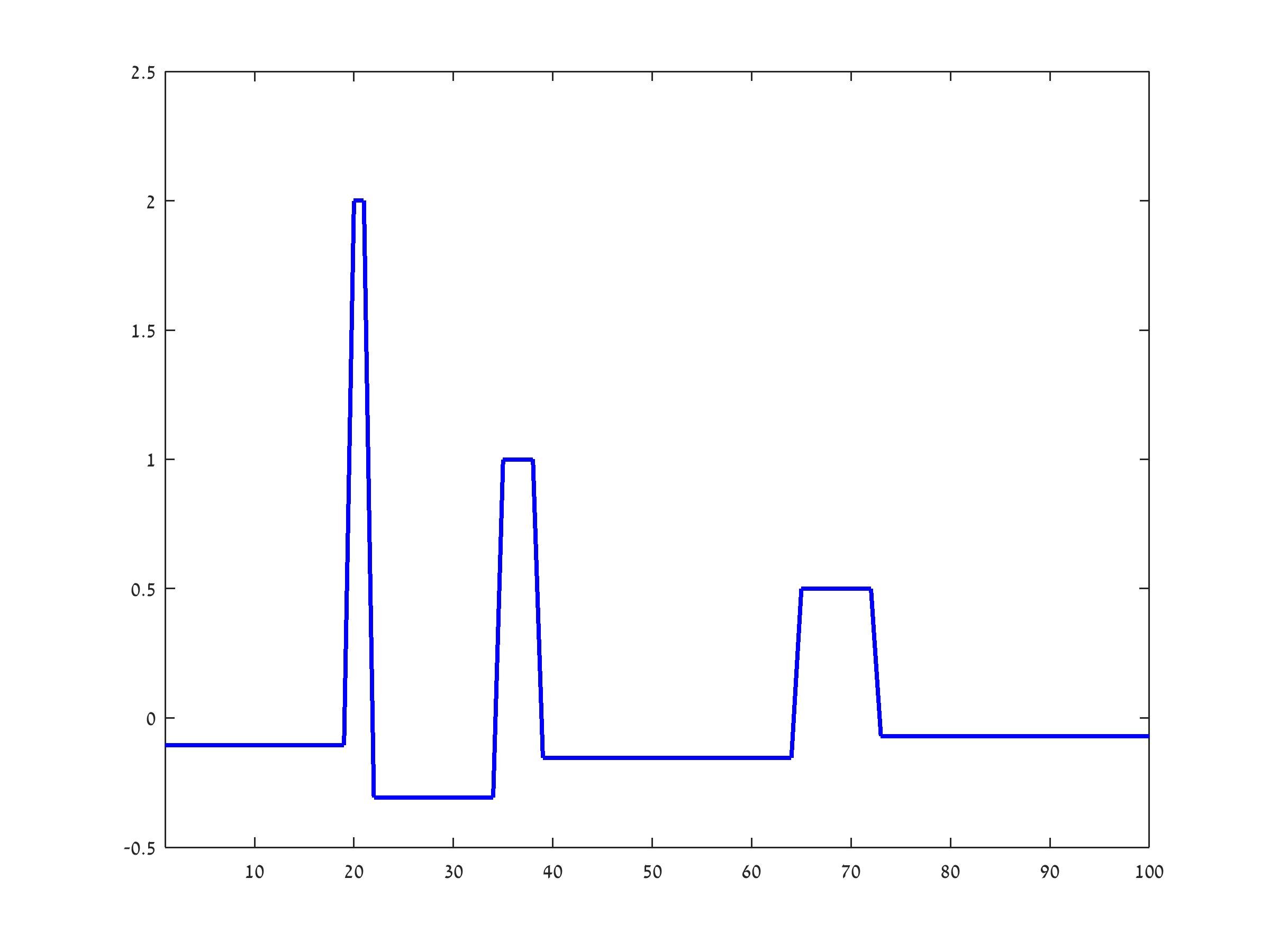} &
    \includegraphics[width=0.25\textwidth]{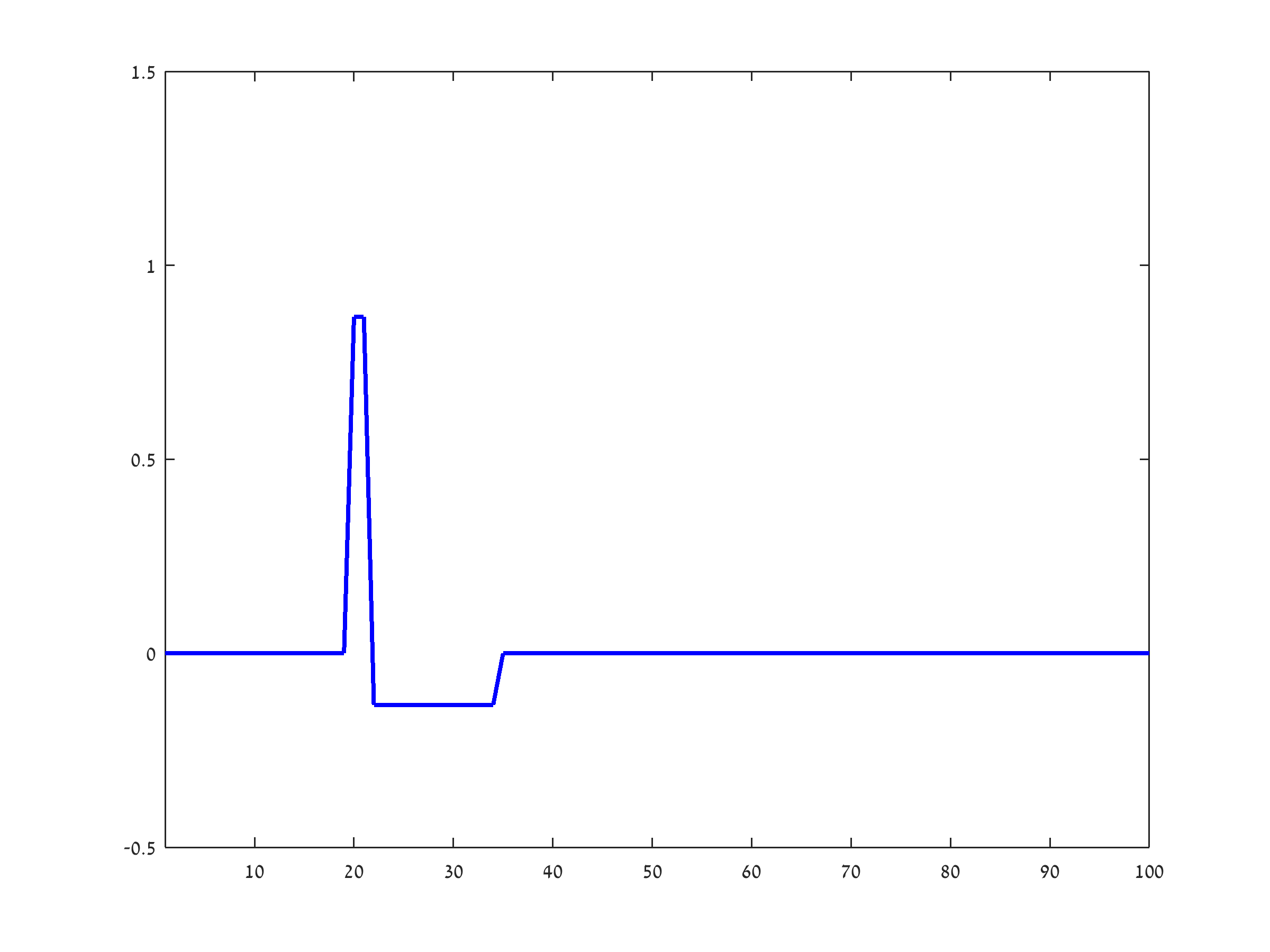} \\
    \includegraphics[width=0.25\textwidth]{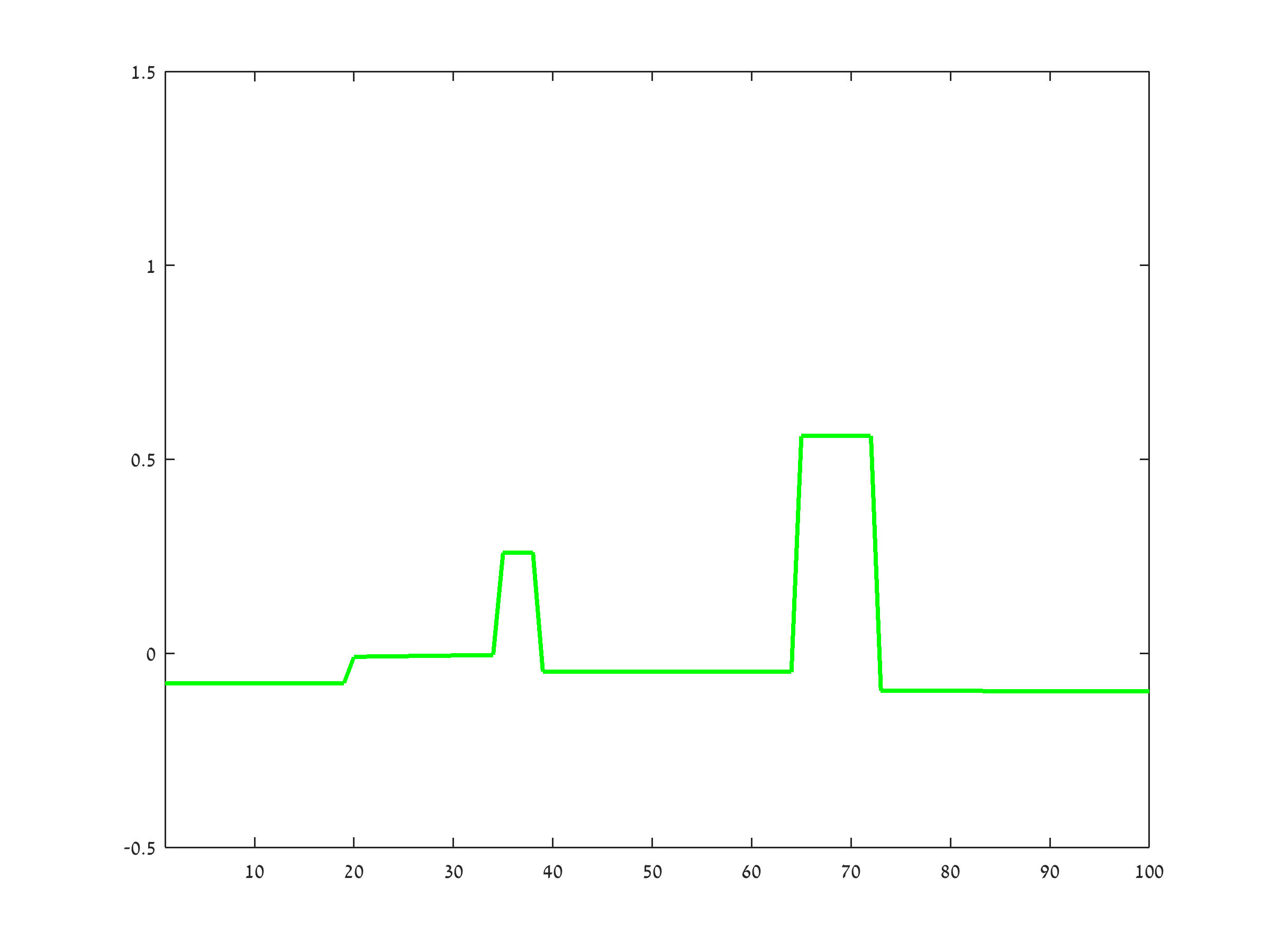} &
    \includegraphics[width=0.25\textwidth]{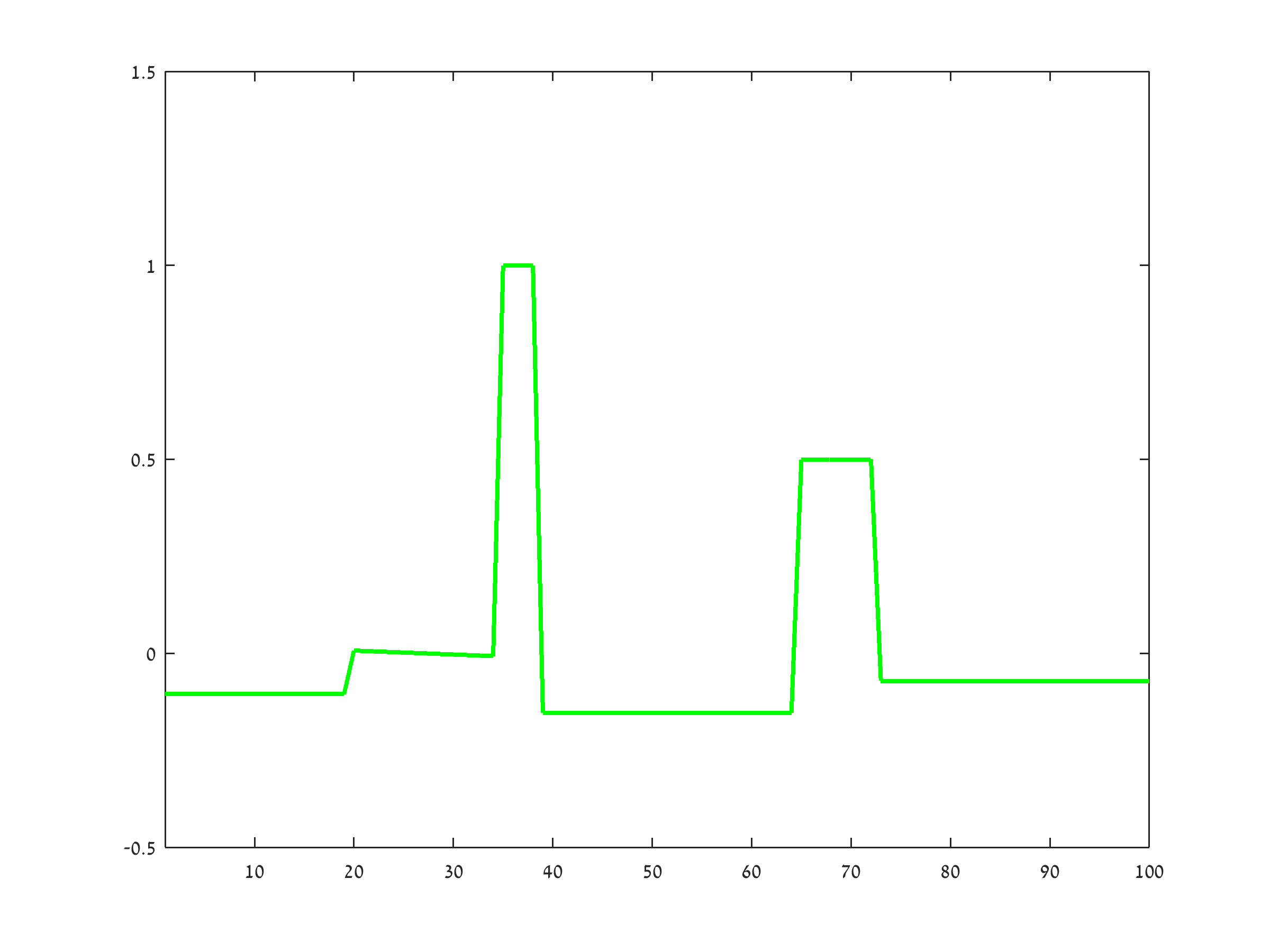} &
    \includegraphics[width=0.25\textwidth]{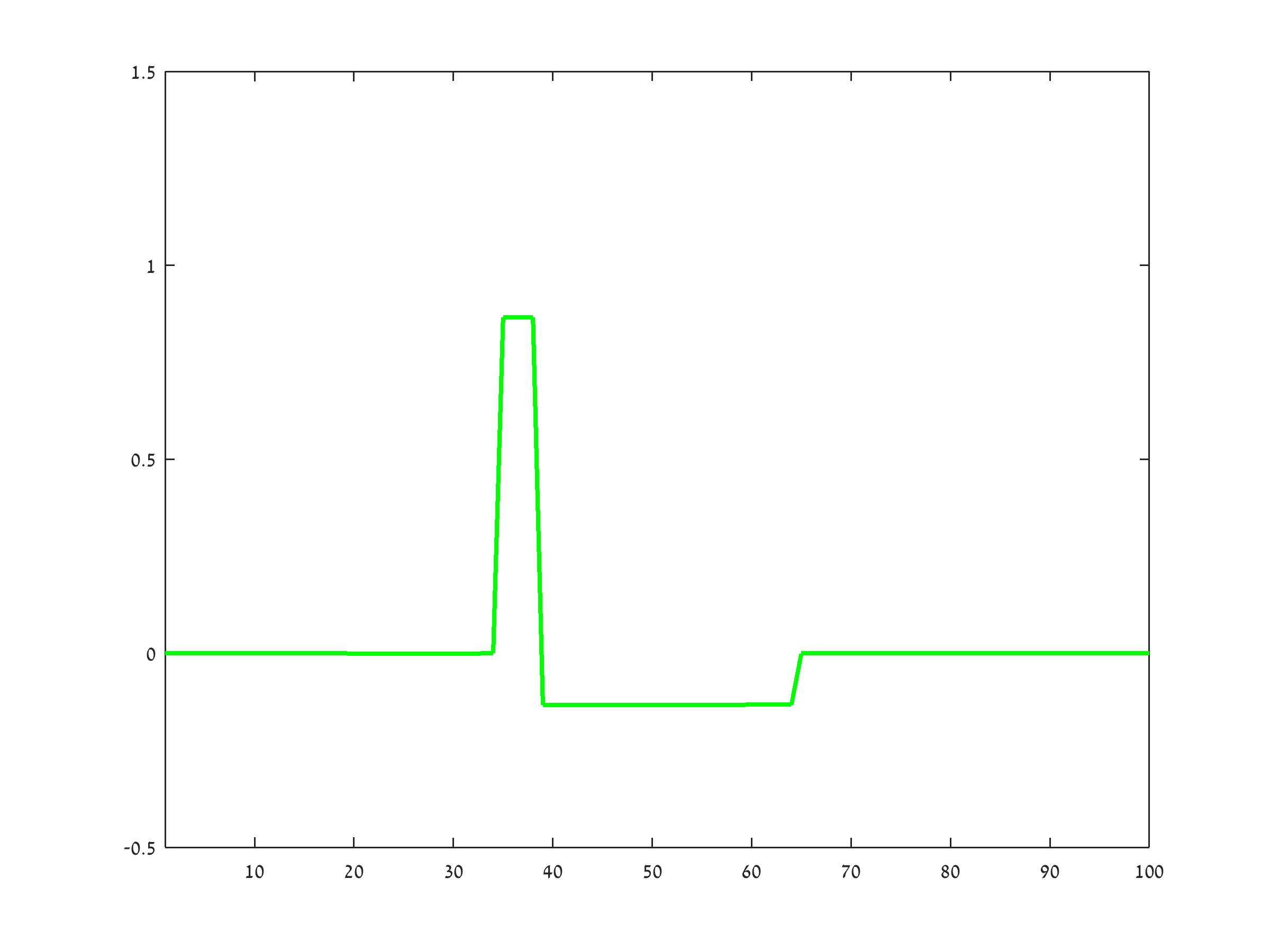} \\
    \includegraphics[width=0.25\textwidth]{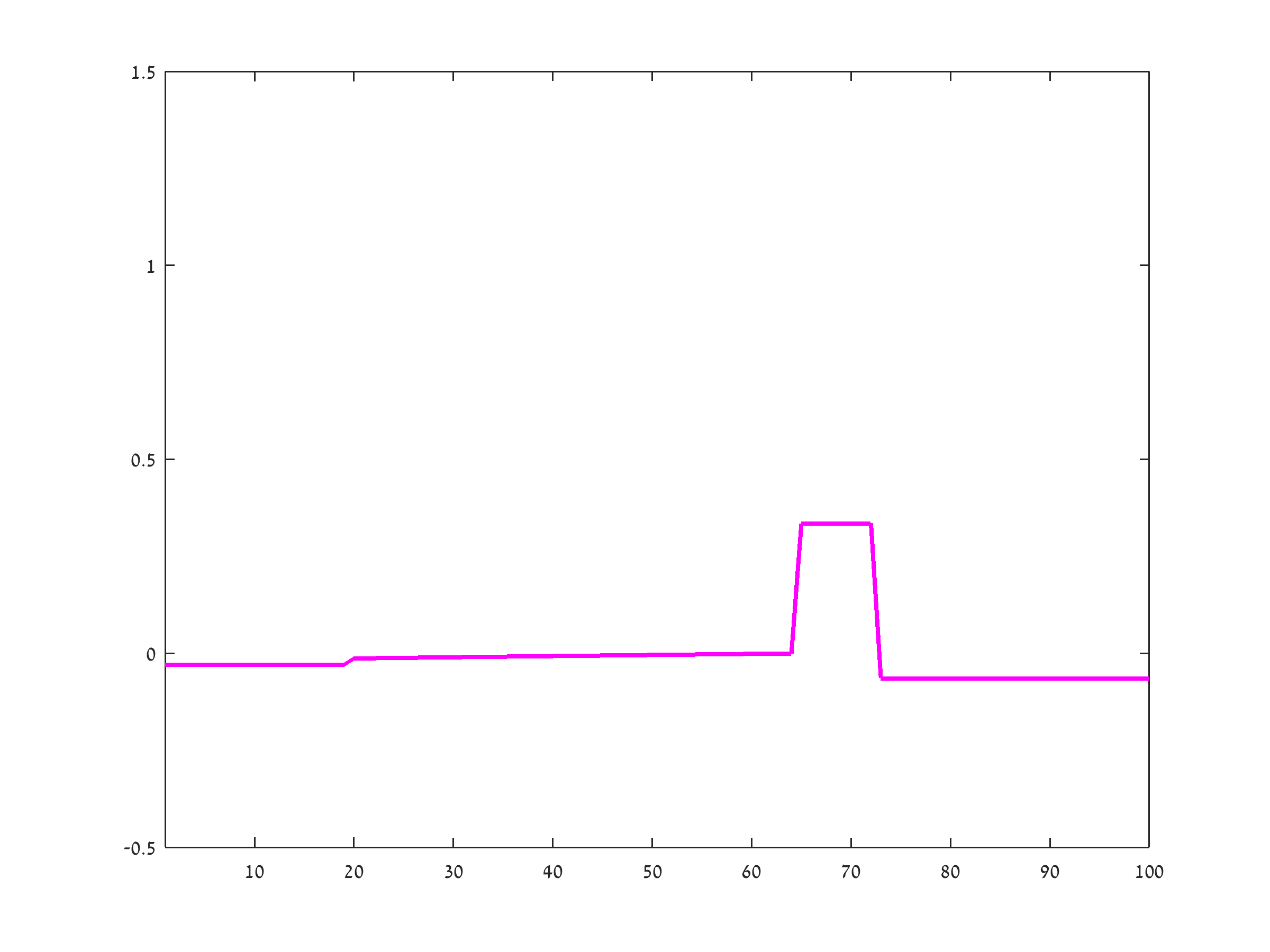} &
    \includegraphics[width=0.25\textwidth]{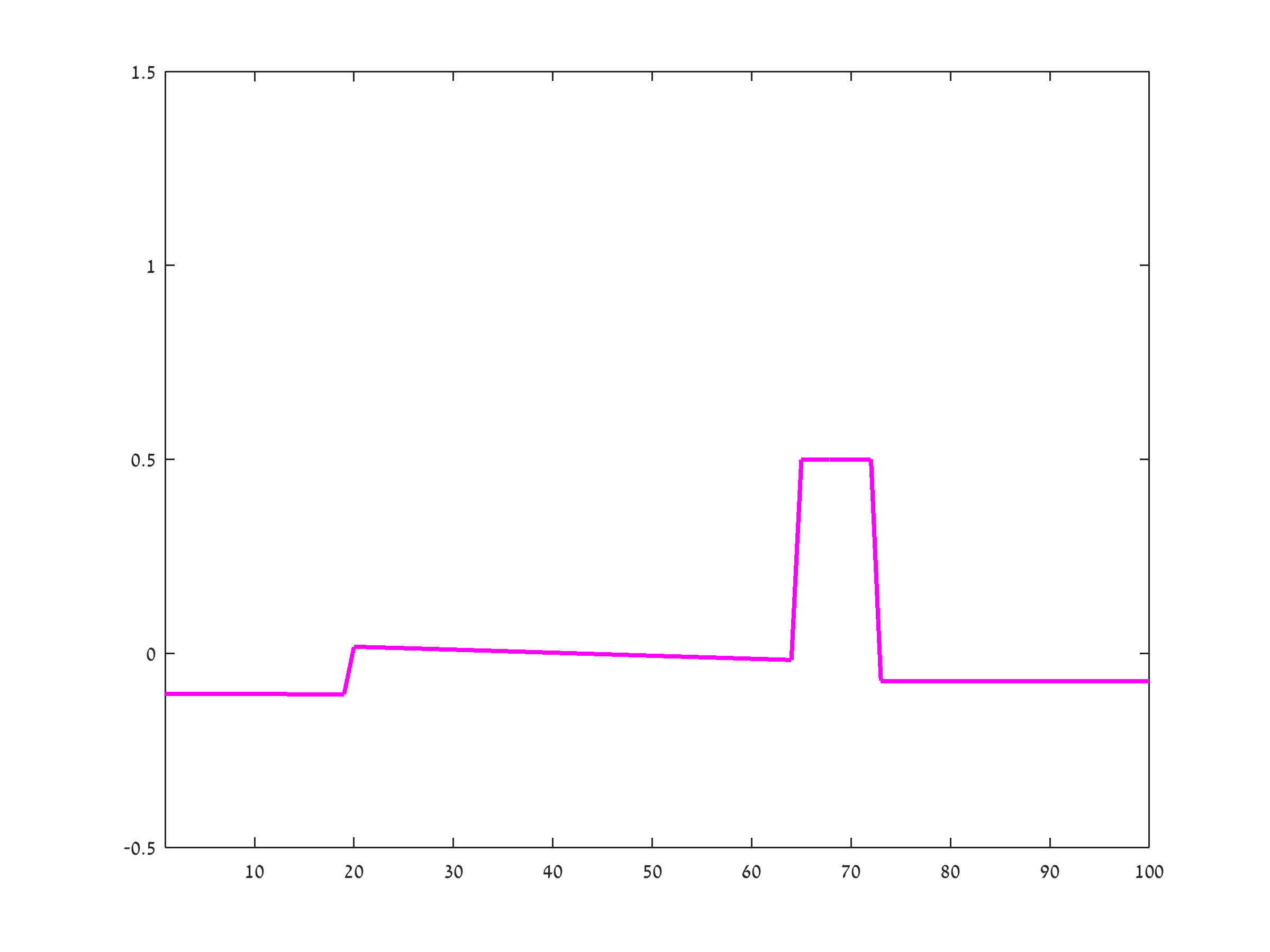} &
    \includegraphics[width=0.25\textwidth]{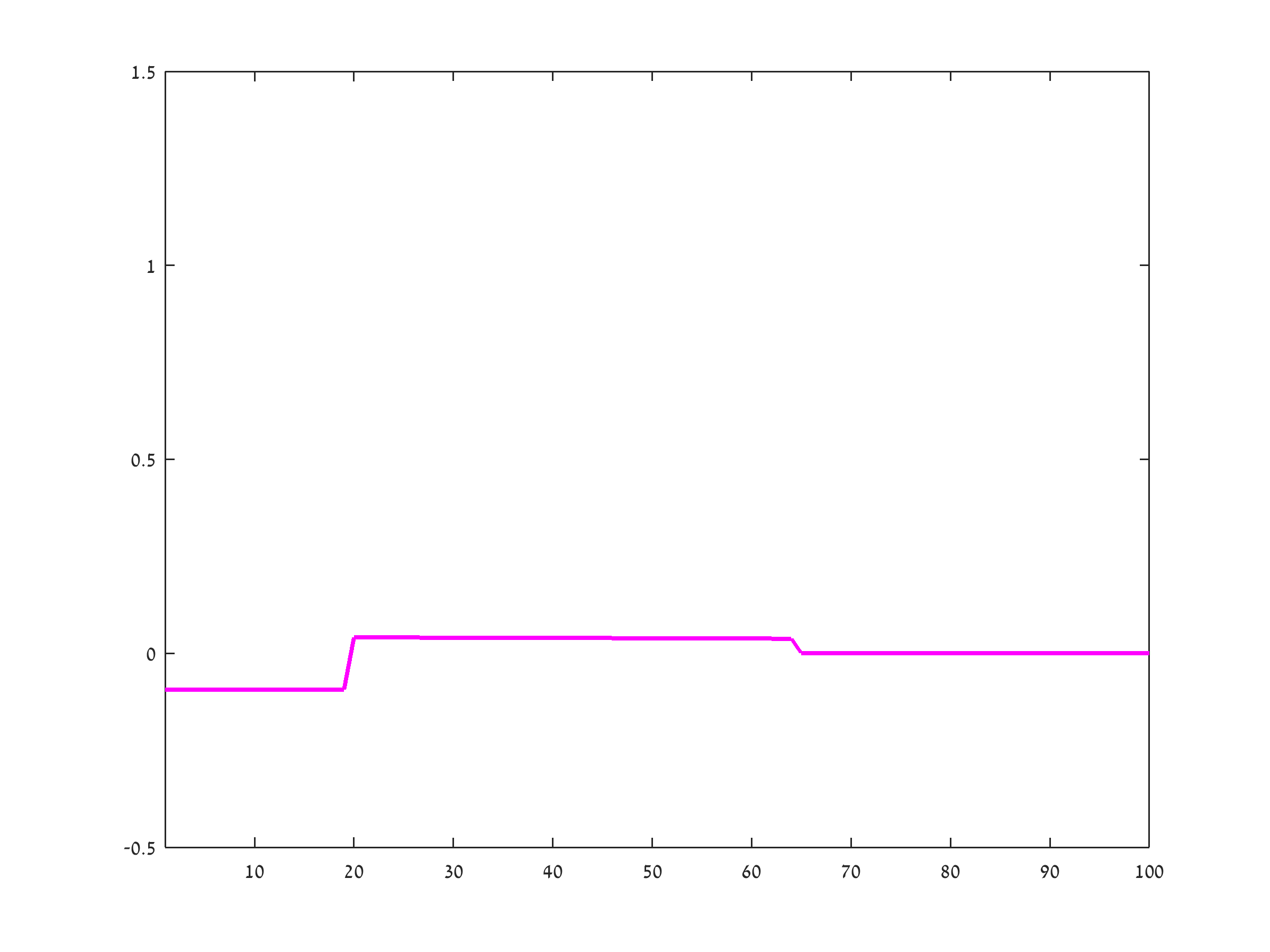} \\
    \includegraphics[width=0.25\textwidth]{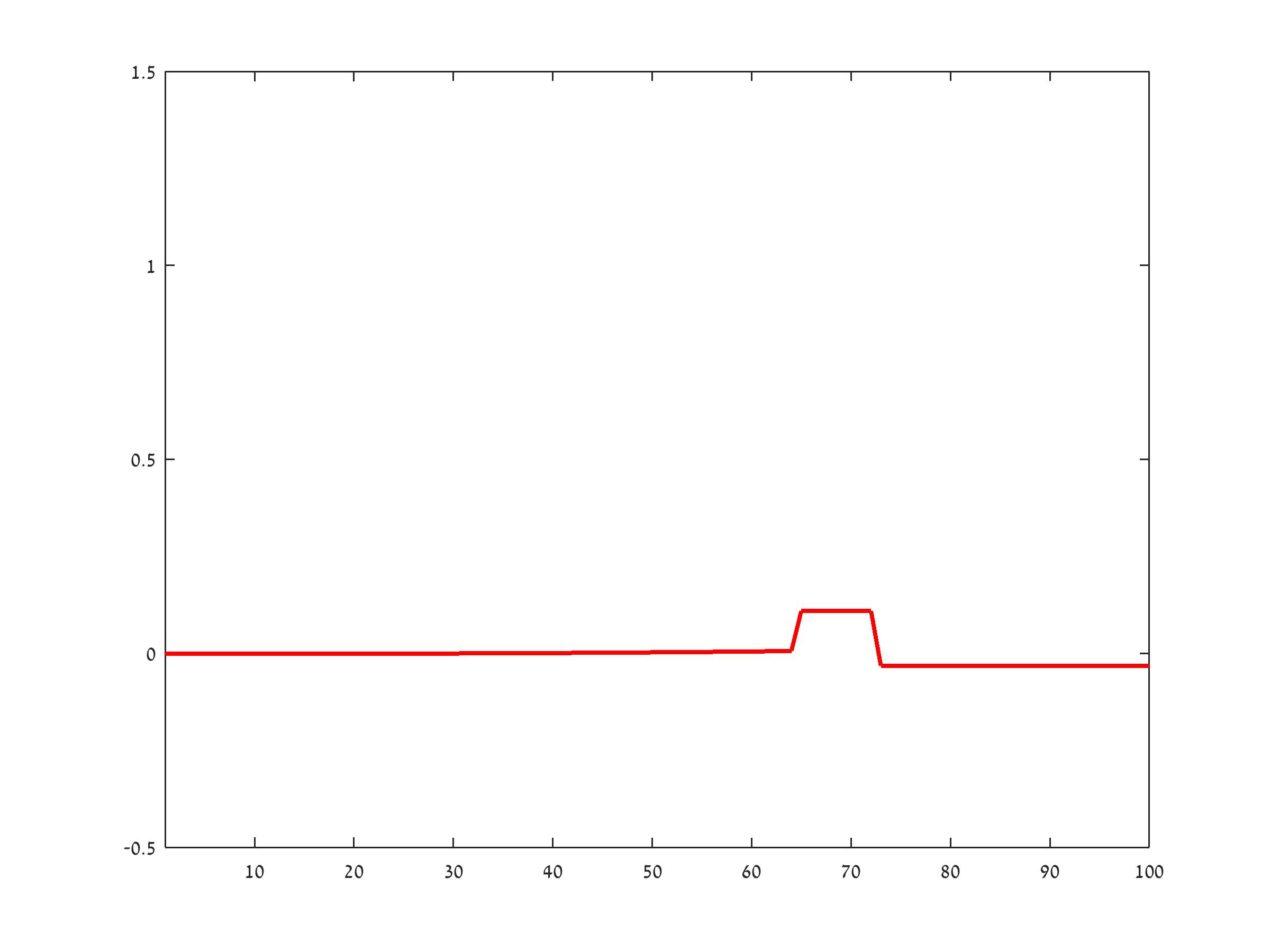} &
    \includegraphics[width=0.25\textwidth]{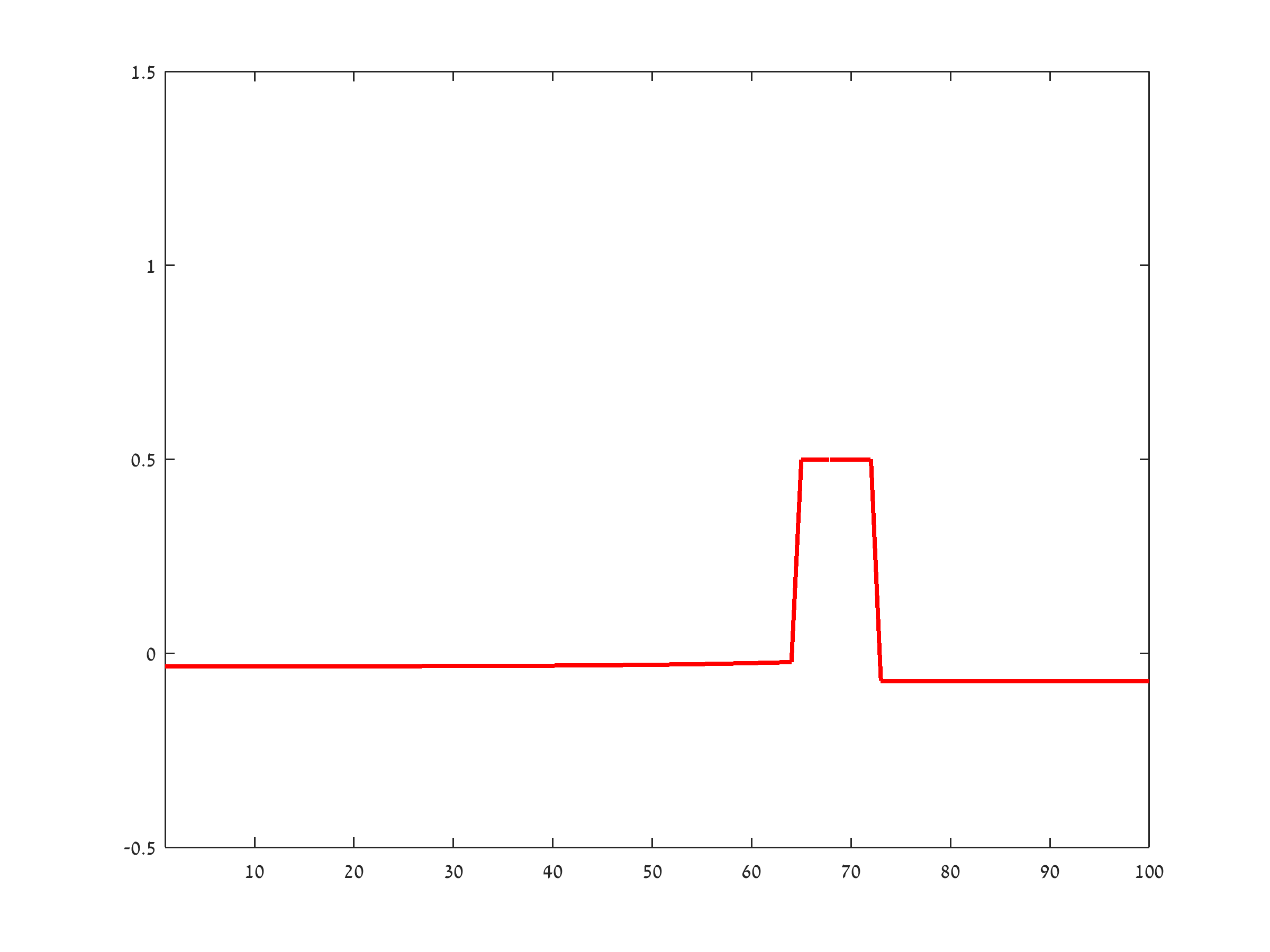} &
    \includegraphics[width=0.25\textwidth]{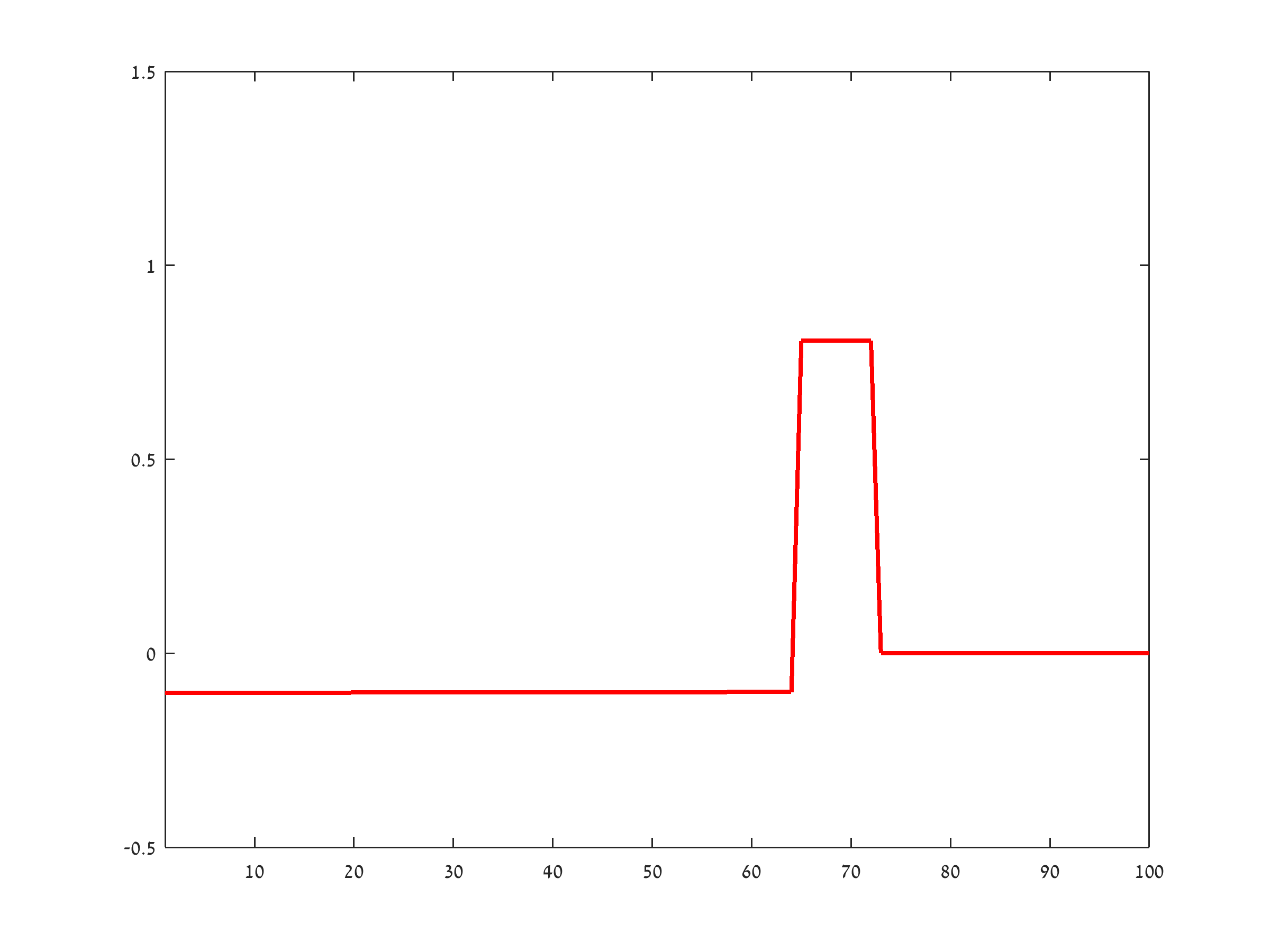} \\
    \includegraphics[width=0.25\textwidth]{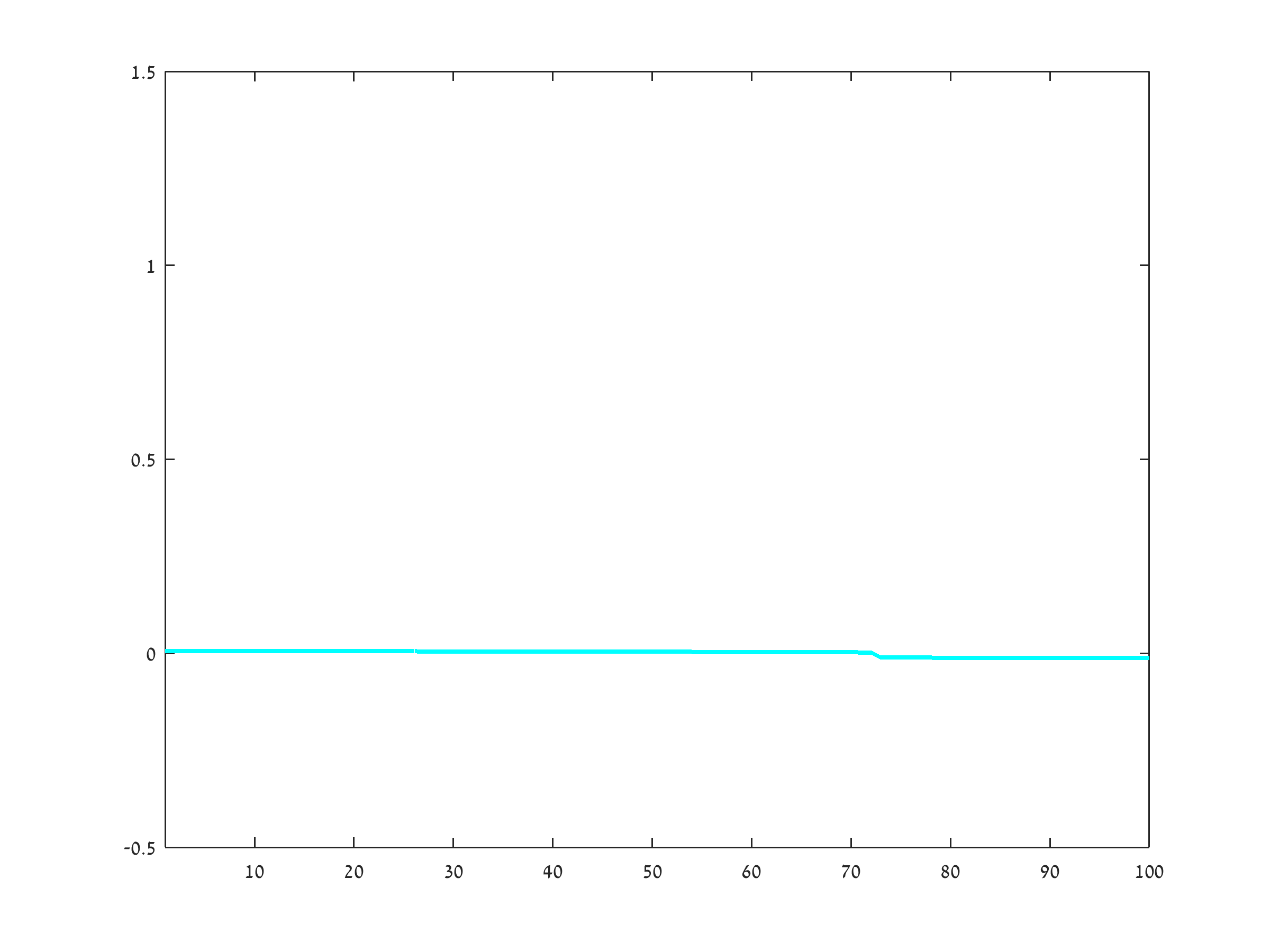} &
    \includegraphics[width=0.25\textwidth]{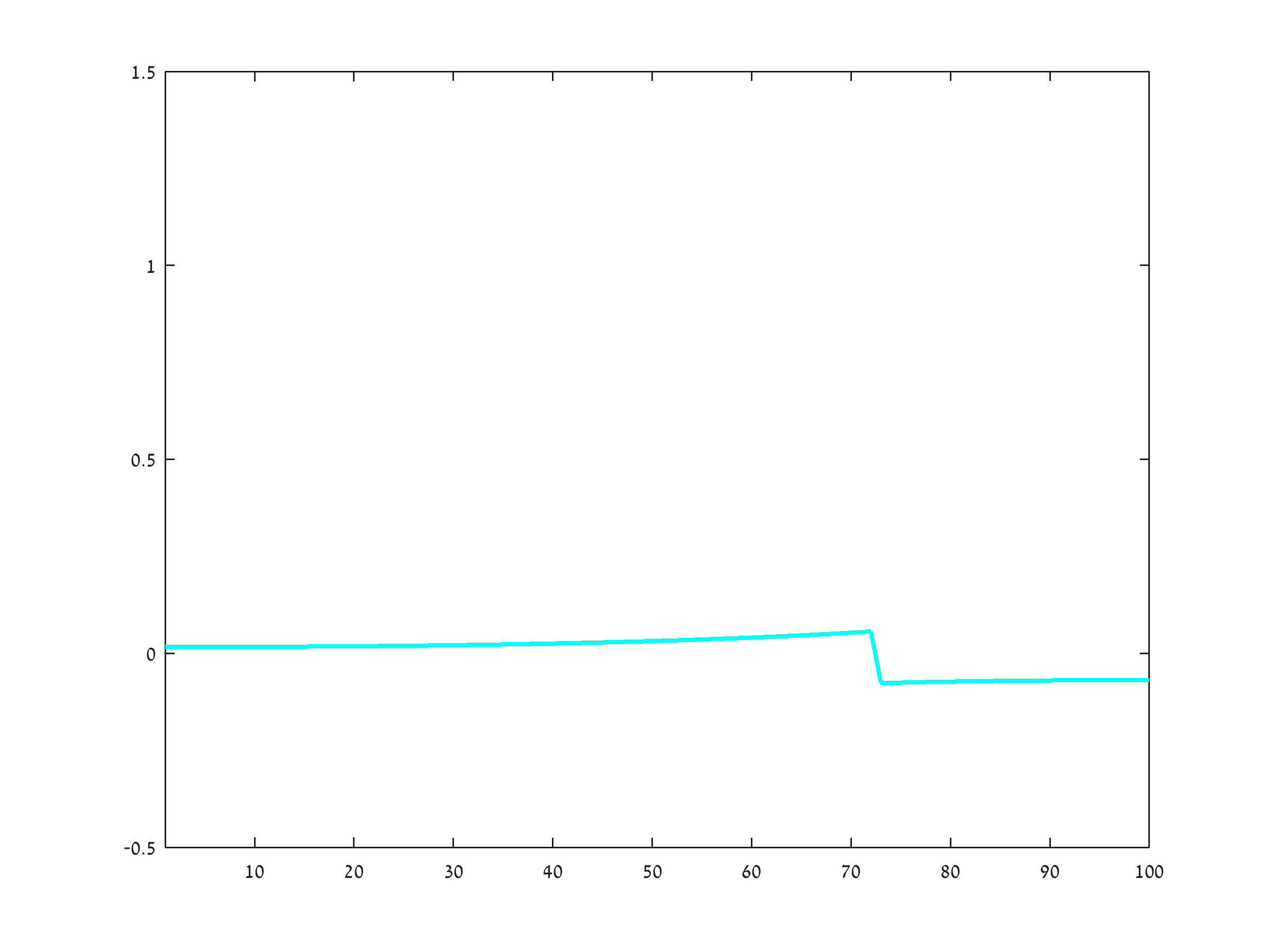} &
    \includegraphics[width=0.25\textwidth]{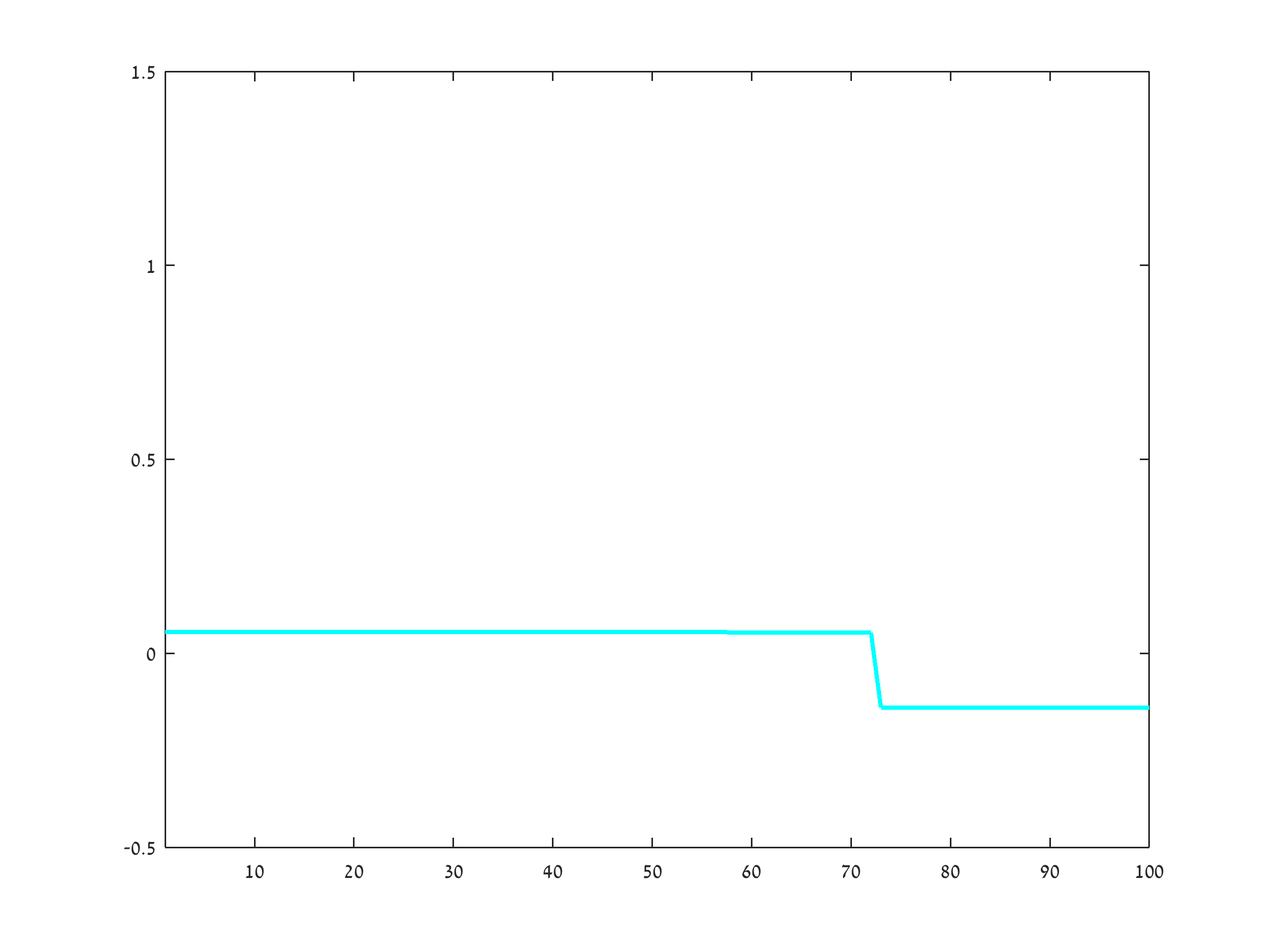} \\
	\end{tabular}
	\caption{\textbf{Decomposition example.} An illustration of Theorem \ref{thm:eigenfunctions}.
Top (from left): input signal $f$, spectrum $S_3^2(t)$ and $S_1(t)$.
From second to sixth row, $u_{GF}(t_i)$ (left), $p_{GF}(t_i)$ (center) and $\Phi(t_i)$ at time points
marked by circles in the spectrum plots. $\Phi(t_i)$ is an integration of $\phi_{GF}(t)$ between times
$t_i$ and $t_{i+1}$, visualized with different colors in the spectrum plots.
}
	\label{fig:decomp}
\end{center}
\end{figure}

\begin{figure}
\begin{center}
\begin{tabular}{cc}
$\textrm{corr}(p(t_i),p(t_j))$ & $\textrm{corr}(\Phi(t_i),\Phi(t_j))$\\
\includegraphics[height=30mm]{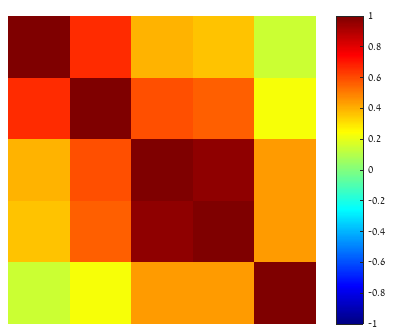}&
\includegraphics[height=30mm]{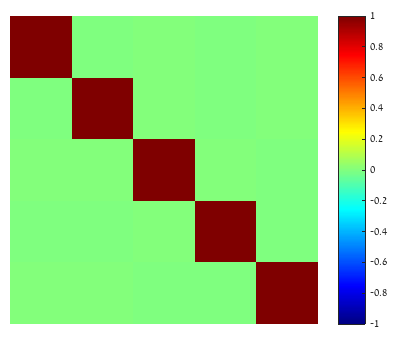}
\end{tabular}
\caption{Illustration of Thm. \ref{thm:orthogonality} (Orthogonal Decompositions). Correlation matrix of the $p$ (left) and the $\Phi$ elements in the example shown in Fig. \ref{fig:decomp}.
Whereas the $p$ elements are correlated the $\Phi$ elements are very close numerically to being orthogonal.}
\label{fig:correlation}
\end{center}
\end{figure}

\section{Numerical Results}
\label{sec:NumericalResults}
In this section we illustrate the qualitative properties of the new spectra definitions and show an example of a decomposition into eigenfunctions.

\subsection{Comparison of Spectrum Definitions}
In Fig. \ref{fig:spectrum} the 3 spectra definitions for $S_1$, $S_2$ and $S_3$, given in Eqs. \eqref{eq:S1}, \eqref{eq:S2} and \eqref{eq:S3}, respectively, are compared for a synthetic and for a natural image using isotropic total variation regularization.
The synthetic image (top left) consists of 3 (approximate) disks of different contrast comprising
of 3 peaks in the spectrum (``numerical deltas''). The spectra are computed for the gradient flow.
The respective spectra, using all definitions, are shown on the bottom left depicting $S_1(t)$ (blue),
$S_2^2(t)$ (red) and $S_3^2(t)$ (green). The peaks of $S_2$ and $S_3$ seem to be more concentrated at the singularities. On the right a natural image is processed (top) where one can observe the qualitative
distinction of $S_1$ from $S_2$ and $S_3$ which are almost identical, suggesting that the equivalence
of $S_2$ and $S_3$ under the (PS) and (MINSUB) conditions (see Prop. \ref{prop:equivalencespectra}), holds (at least approximately) also in a more general setting.

\subsection{A Decomposition Example}
In Fig. \ref{fig:decomp} the input signal $f$ (top left) consists of 3 flat peaks of different width.
The spectra $S_3^2(t)$ and $S_1(t)$ are shown on the top row (middle and right, respectively) and
a detailed visualization of the TV gradient flow is shown on rows 2-6 with the time-points $t_i$ shown
on the spectra plots by black circles. For $u_{GF}(t_i)$ and $p_{GF}(t_i)$ the actual functions are
plotted at those time points, whereas $\Phi(t_i)$ is a time integrated version, corresponding to the time
intervals depicted by 5 colors in the spectra plots (blue, green, magenta, red and cyan).
This is a 1D discrete case, so it meets the conditions of Theorems \ref{thm:eigenfunctions}
and \ref{thm:orthogonality}. Note that though the functions $p_{GF}$ are eigenfunctions they
are highly correlated to each other, as can be expected by the nature of a gradient flow. The functions $\Phi(t_i)$ are very close to orthogonal to each other and thus can be
considered as the decomposition of the signal into its orthogonal components.

This can be most clearly seen in Fig. \ref{fig:correlation} where the empirical correlation
$$ \textrm{corr}(u,v) :=  \frac{\langle u,v \rangle}{\|u\|\|v\|}$$
is computed for all pairs $(p_{GF}(t_i),p_{GF}(t_j))$, $i,j=\{1, 2..\, 5\}$ (left) and
all  pairs $(\Phi(t_i),\Phi(t_j))$ (color visualization of $5\times 5$ matrices). Note that $-1 \le \textrm{corr}(u,v) \le 1$ and that both $p_{GF}$ and $\Phi$ are with zero mean.
We can clearly see that all $p_{GF}$ functions have a strong positive correlation with each other, whereas the $\Phi$ functions are with a correlation which is very close to zero (depicted by the green color) for all $i\ne j$.

\section{Conclusions}
\label{sec:Conclusions}
We have analyzed the definitions of a nonlinear spectral decomposition based on absolutely one-homogeneous regularization functionals via scale space flows, variational methods, and inverse scale space flows in detail and demonstrated that such decompositions inherit many properties from the classical linear setting. We have proven that the spectrum of all three methods merely consists of finitely many delta peaks for polyhedral regularizations. Relations between the different methods, such as the equivalence of the variational and scale space method under the (MINSUB) assumption, as well as the equivalence of all three spectral decompositions under the (DDL1) assumption, were shown. The latter furthermore implies an orthogonal decomposition of the input data. A particularly interesting direction of research is the extension of eigenvector theory to a nonlinear setting. In this context we were able to show that all subgradients of the scale space flow with respect to a (DDL1) regularization are generalized eigenvectors. This implicitly establishes the existence of a basis of such eigenvectors for (DDL1) regularizations.

There are many open questions and directions of future research. In particular, we will investigate if the spectral decomposition with respect to the inverse scale space flow is equivalent to the other two decompositions under the assumption of (MINSUB) already, and the related search for regularizations that meet (MINSUB) but not (DDL1). Further questions are the consistency of filters, statements regarding the relation of the three different approaches beyond the assumptions we made, and the extension of our considerations to function spaces.

\section*{Acknowledgements}
The authors would like to thank Emanuele Rodola for interesting discussions on the spectral decomposition framework. 
MB acknowledges support by ERC via Grant EU FP 7 - ERC Consolidator Grant 615216 LifeInverse.
GG acknowledges support by the Israel Science Foundation (grant No. 718/15) and by the Magnet program of the OCS, Israel Ministry of Economy, in the framework of Omek Consortium.
MM and DC were supported by the ERC Starting Grant ConvexVision.

\bibliographystyle{siam}
\bibliography{refs}

\end{document}